\documentclass{amsart}

\usepackage{longtable}

\usepackage{amsfonts}
\usepackage{amsmath}
\usepackage{amssymb}
\usepackage[mathscr]{eucal}
\usepackage{color}
\usepackage{url}
\usepackage{cases}
\usepackage{bm}
\usepackage{comment}
\usepackage{multirow, tabularx}

\usepackage{caption}

\usepackage{amsthm}

\setcounter{MaxMatrixCols}{30}
\theoremstyle{plain}
 \newtheorem{theorem}{Theorem}[section]
 
 \newtheorem{result}{Result}[section] 
 \newtheorem*{theorem*}{Theorem}
 \newtheorem{proposition}[theorem]{Proposition}
 \newtheorem{lemma}[theorem]{Lemma}
 \newtheorem{corollary}[theorem]{Corollary}

\theoremstyle{definition}
 
\theoremstyle{remark}
 \newtheorem{remark}[theorem]{Remark}
  \newtheorem{example}[theorem]{Example}
\numberwithin{equation}{section}

\def\alg#1{\textcolor{red}{\bf #1}}

\newcommand{\OA}{\ensuremath{\operatorname{OA}}}
\newcommand{\sV}[2]{{
	\setlength{\arraycolsep}{2pt}
	\renewcommand{\arraystretch}{0.8}
	\left[\begin{array}{ccc} #1 \\ #2 \end{array}\right]
}}

\DeclareUnicodeCharacter{2212}{-}

\makeatletter
\def\bbordermatrix#1{\begingroup \m@th
  \@tempdima 4.75\p@
  \setbox\z@\vbox{%
    \def\cr{\crcr\noalign{\kern2\p@\global\let\cr\endline}}%
    \ialign{$##$\hfil\kern2\p@\kern\@tempdima&\thinspace\hfil$##$\hfil
      &&\quad\hfil$##$\hfil\crcr
      \omit\strut\hfil\crcr\noalign{\kern-\baselineskip}%
      #1\crcr\omit\strut\cr}}%
  \setbox\tw@\vbox{\unvcopy\z@\global\setbox\@ne\lastbox}%
  \setbox\tw@\hbox{\unhbox\@ne\unskip\global\setbox\@ne\lastbox}%
  \setbox\tw@\hbox{$\kern\wd\@ne\kern-\@tempdima\left[\kern-\wd\@ne
    \global\setbox\@ne\vbox{\box\@ne\kern2\p@}%
    \vcenter{\kern-\ht\@ne\unvbox\z@\kern-\baselineskip}\,\right]$}%
  \null\;\vbox{\kern\ht\@ne\box\tw@}\endgroup}
\makeatother

\title{Extremal orthogonal arrays}
\author{Alexander L. Gavrilyuk}
\address{Department of Mathematical Sciences, University of Memphis, Tennessee, U.S.A.}
\email{a.gavrilyuk@memphis.edu}
\author{Sho Suda}
\address{Department of Mathematics, National Defense Academy of Japan, Kanagawa, Japan}
\email{ssuda@nda.ac.jp}



\begin{document}

\begin{abstract}
It is known that a Delsarte 
$t$-design in a $Q$-polynomial 
association scheme has degree at least 
$\left \lceil{\frac{t}{2}}\right \rceil
$. 
Following Ionin and Shrikhande who studied combinatorial $(2s-1)$-designs 
(i.e., Delsarte designs in Johnson association schemes) 
having exactly $s$ block intersection numbers, we call a Delsarte 
$(2s-1)$-design 
with degree $s$ \emph{extremal} 
and study 
extremal orthogonal arrays, which 
are Delsarte designs in Hamming 
association schemes.

It was shown by Delsarte that a $t$-design with degree $s$ and $t\geq 2s-2$ in a Hamming association scheme 
induces an $s$-class association scheme. We prove 
that an extremal orthogonal array 
gives rise to a fission scheme 
of the latter one, which has 
$2s-1$ or $2s$ classes. 
As a corollary, 
a new necessary condition for the existence of tight orthogonal arrays 
of strength $3$ is obtained.

Furthermore, as a counterpart to 
a result of Ionin and Shrikhande, 
we prove an 
inequality for Hamming distances in extremal orthogonal arrays.
The inequality is 
tight as shown 
by examples related to the Golay codes.
\end{abstract}

\maketitle


\section{Introduction}

The concept of an algebraic 
(or \textbf{Delsarte}) $t$-design in $Q$-polynomial association schemes 
was introduced in \cite{D}. 
It unifies the notions of combinatorial 
(block) designs, orthogonal arrays, 
spherical designs. 
For an arbitrary $Q$-polynomial 
association scheme having a $t$-design 
with degree $s$ and $t\geq 2s-2$, Delsarte 
showed that such a design induces another 
$Q$-polynomial association scheme 
with $s$ classes, which will be referred to as a \textbf{Delsarte scheme} of the design.

One of the challenging problems in 
Algebraic Combinatorics is 
to classify tight designs in various settings, such as the Hamming association schemes $H(n,q)$, the Johnson association schemes $J(v,k)$, or spherical designs on the real unit sphere 
$S^{d-1}$. A design is \textbf{tight} if its size achieves a Rao type or Fisher type 
lower bound (see \cite{R,RW,DGS} 
for $H(n,q)$, $J(v,k)$, $S^{d-1}$ respectively).

Note that a tight $2s$-design necessarily has degree $s$. 
A classical necessary existence condition for the corresponding 
Delsarte scheme is given by 
the so-called Wilson type theorems
(\cite[Theorem~5.21]{D}), 
which, loosely speaking, state that 
the degree set (the $s$ intersection
numbers) of such a design must be 
(nonnegative integer) roots of a 
certain polynomial.
This condition restricts possible 
Hamming distances in tight 
orthogonal arrays, sizes of block intersections in tight combinatorial designs, 
and inner products of vectors in tight spherical designs, and it was 
a primary tool to show many 
classification results \cite{BD1,BD2,H}. 

It is also known \cite{D}
that a Delsarte $(2s-1)$-design has 
at least $s$ intersection numbers.
Combinatorial $(2s-1)$-designs 
having exactly $s$ block 
intersection numbers were studied 
by Ionin and Shrikhande \cite{IS}, 
who called 
them \emph{extremal}.
Here we adopt this terminology 
and study \textbf{extremal} orthogonal 
arrays, i.e., Delsarte $(2s-1)$-designs 
in Hamming association schemes 
with exactly $s$ Hamming distances.

Recall that an \textbf{orthogonal array} 
OA$(N,n,q,t)$ 
is an $N\times n$ matrix with entries from 
the alphabet $[q]:=\{1,2,\ldots,q\}$ arranged in such a way that in 
every $t$ columns all possible rows of $[q]^t$ occur equally often.
Since orthogonal arrays were introduced by Rao \cite{R}, they 
became one of the central topics in combinatorics \cite{HSS}. 
Given strength $t$, the alphabet size $q$, and the number of columns $n$, 
a fundamental problem is constructing orthogonal arrays with 
minimum possible number of rows~$N$. 
The lower bound on $N$ was given by Rao~\cite{R} as follows:
\begin{numcases}{N\geq}
    {\displaystyle \sum_{k=0}^e \binom{n}{k}(q-1)^k,\text{ if }t=2e,}
    \label{eq:tight}\\
    {\displaystyle \sum_{k=0}^{e-1} \binom{n}{k}(q-1)^k+\binom{n-1}{e-1}(q-1)^{e},\text{ if }t=2e-1}.\label{eq:tighto}
\end{numcases}
An orthogonal array is said to be \textbf{tight} 
if it satisfies Eq.\ \eqref{eq:tight} or \eqref{eq:tighto} with equality. 
By taking the rows of the matrix as points, an $\OA(N,n,q,t)$ can be viewed as a Delsarte $t$-design 
in the Hamming association scheme $H(n,q)$. Furthermore, if it is tight 
then the corresponding 
design has degree $e$.

Tight orthogonal arrays with 
$t=2$ and $q=2$ correspond 
to Hadamard matrices; thus, their classification seems hopeless. 
For $t=2$ and $q\geq 3$, 
tight $\OA(N,n,q,2)$ are only known 
with $n=q+1$ and $q$ a prime power 
\cite{B}.
However, for $t>2$, 
the existence and classification problem of tight OAs with even strength 
has received considerable attention; 
its current state is summarized as follows, see \cite{GSV}.

\begin{result}\label{theo:class2e} 
The following holds.
\begin{itemize} 
    \item[$(1)$] If $t=4$, then a tight $\OA(N,n,q,4)$ is one of the following:
    \begin{itemize}
        \item[$(i)$] the dual code of the binary repetition code of length $5$, i.e., $(N,n,q)=(16,5,2)$;
        \item[$(ii)$] the dual code of the ternary Golay code, i.e., 
        $(N,n,q)=(243,11,3)$.
    \end{itemize}
    \item[$(2)$] If $t=6$, then 
    $q=2$ and
    a tight $\OA(N,n,2,6)$ is one 
    of the following:
    \begin{itemize}
        \item[$(i)$] the dual code of the binary repetition code of length $7$, i.e., $(N,n)= (64,7)$;
        \item[$(ii)$] the dual code of the binary Golay code, 
        i.e., $(N,n)= (2048,23)$.
    \end{itemize}
    \item[$(3)$] There is no tight $\OA$ for all $t=2e\geq 6$ and $q\geq 3$. 
\end{itemize}
\end{result}

Note that \emph{contracting} an  
$\OA(N,n,q,t)$, i.e., taking the set of its rows having 
the same symbol in a fixed column, produces an 
$\OA(\frac{1}{q}N,n-1,q,t-1)$. 
In particular, a contraction of a tight OA with strength $t=2e+1$ 
is a tight OA with strength $t=2e$. Thus, in view of 
Result \ref{theo:class2e}, 
the classification problem 
of tight OAs 
makes sense in the following cases: $q\geq 2$ and $t=3$, 
or $q\in \{2,3\}$ and $t=5$,
or $q=2$ and $t\geq 7$. 
These cases were studied in  
\cite{MK94,N1986}, as summarized below. 
It is conjectured in \cite{MK94} 
that for all $t\geq 8$ a tight $\OA(N,n,2,t)$ exists 
if and only if $n=t+1$.

\begin{result}\label{odd-t}
The following holds. 
    \begin{itemize}
        \item[$(1)$] 
        If $t=3$, then 
        a tight $\OA(N,n,q,3)$ satisfies one of the following cases:
    \begin{itemize}
        \item[(i)] $(N,n,q)=(2n,n,2)$ with $n\equiv 0\pmod{4}$; such an orthogonal array exists if and only if there exists a Hadamard matrix of order $n$;
        \item[(ii)] $(N,n,q)=(q^3,q+2,q)$ with $q$ even.
    \end{itemize}
    \item[$(2)$]  
    If $t=5$, then 
    a tight $\OA(N,n,q,5)$ is one 
    of the following:
    \begin{itemize}
        \item[$(i)$] the dual code of the binary repetition code of length $6$, i.e., $(N,n,q)= (32,6,2)$; 
        \item[$(ii)$] the dual code of the extended ternary Golay code, 
        i.e., $(N,n,q)= (729,12,3)$.
    \end{itemize}
    \item[$(3)$] 
    If $t=7$, then $q=2$ and
    a tight $\OA(N,n,2,7)$ is one 
    of the following:
    \begin{itemize}
        \item[$(i)$] the dual code of the binary repetition code of length $8$, i.e., $(N,n)= (128,8)$;
        \item[$(ii)$] the dual code of the extended binary Golay code, i.e., $(N,n)= (4096,24)$.
    \end{itemize}
    \item[$(4)$] 
    There is no 
    $\OA(N,n,2,t)$ with $8\leq t\leq 13$, $n\ne t+1$ 
    and $n\leq 10^9$.
    \end{itemize}
\end{result}

The orthogonal arrays in Result \ref{odd-t} (1-ii) are 
known to exist if $q$ is a power of~2 \cite[Section~9]{BB}. 
Our first result in this paper 
refines this part of 
Noda's result \cite{N1986}
by showing (see Corollary \ref{cor:tight3}) 
that $q>2$ must be a multiple of four. 
To prove this, 
we take a closer look at the Delsarte scheme of an extremal 
orthogonal array of strength 
$t=2s-1$. 
We show in Theorem \ref{thm:qant4} that this Delsarte scheme 
admits a \emph{fission scheme} 
with $2s-1$ or $2s$ classes 
(depending on whether the given OA is tight or no) and then examine its feasibility, using so-called triple intersection numbers. This extends the approach of how the classification of tight OAs with strength $4$
was completed in our joint work with Vidali \cite{GSV}. 

Considering extremal combinatorial 
designs, Ionin and Shrikhande \cite{IS}
characterized, 
under certain assumptions, 
the Witt 5-$(24,8,1)$  
and 4-$(23,7,1)$ designs 
as the only extremal 
$(2s-1)$- and $2s$-designs,
respectively.
Furthermore, they proved the 
following inequality for 
block intersection numbers.

\begin{result}\label{theo:ISineq} 
Let $\mathcal{B}\subset \binom{v}{k}$ be a $(2s-1)$-design with intersection numbers 
\mbox{$x_1<\cdots<x_s$}, that is, $\{|b\cap b'| \mid b,b'\in \mathcal{B}, b \neq b'\}=\{x_1,\ldots,x_s\}$.  
Then: 
\[
\frac{(s-1)(k-s)(k-s+1)}{v-2s+2}\leq \sum_{i=1}^s x_i-\frac{s(s-1)}{2} \leq \frac{s(k-s)(k-s+1)}{v-2s+1},
\]
with equality in the left if and only if $x_1=0$, and with equality in the right if and only if $\mathcal{B}$ is a tight $2s$-design. 
\end{result}

Our second result (Theorem \ref{thm:e2}) 
is a similar two-sided inequality 
for Hamming distances between rows 
of an extremal OA. Furthermore, the ternary 
Golay codes in $H(12,3)$ 
and $H(11,3)$ show that this inequality is tight (see Example \ref{extremalOA}).

The paper is organized as follows. In Section~\ref{sect:Definitions}, we review some basic theory of association schemes and related concepts.  
In Section~\ref{sect:fission}, we consider the Delsarte scheme of 
an extremal OA of strength $2s-1$
and construct 
a fission $\mathcal{S}$ of this scheme, 
which has $2s-1$ or $2s$ classes, 
and determine 
its second eigenmatrix. 
In Section~\ref{sect:tight3}, 
we investigate the triple intersection numbers of the scheme $\mathcal{S}$ obtained from tight $3$-designs in $H(q+2,q)$ to show that $q$ must be a multiple of four if $q>2$. 
In Section~\ref{Section4}, 
we prove a Ionin-Shrikhande type 
inequality, determine some feasible parameters and known examples of 
extremal orthogonal arrays, and 
classify those with degree $s=2,3,4$ 
in $H(n,2)$, whose Hamming distances 
are symmetric with respect to $n/2$. 
In the appendix, we calculate the determinant of the second eigenmatrix of the fission schemes $\mathcal{S}$. 
This allows us to strengthen 
a theorem by Calderbank and Goethals 
\cite[Theorem~1]{CG}, who considered 
the existence of $(2s-2)$-designs with degree $s$ in the Hamming association schemes in the context of coding theory. 
(Note that $(2s-2)$-designs with degree $s$ in the Johnson association schemes, 
refereed to as {\it schematic designs}, 
were recently studied in \cite{BNS,KV}.)

It is known that tight Delsarte designs do not exist in most of the classical $P$- and $Q$-polynomial association schemes \cite{Chihara}. Thus, it is natural to quest for extremal designs in these schemes. It is shown in \cite{S,S2022} that the collection of derived designs of a $(2s-1)$-design with degree $s$ in an arbitrary $Q$-polynomial association scheme induces a coherent configuration. To study this configuration, it may be helpful to consider triple intersection numbers, which have proven useful in the studies on distance-regular graphs \cite{CJ}, $Q$-polynomial association schemes \cite{GVW}, and non-symmetric association schemes \cite{GavrilyukLansdownMunemasaSuda2025}. Furthermore, the proofs of Result~\ref{theo:ISineq} and Theorem~\ref{thm:e2} are based on regular semilattices, which also arise naturally in most of the classical schemes \cite{DELSARTE1976230}.

\section{Preliminaries}\label{sect:Definitions}

Here we recall some notions and facts needed in the subsequent sections.

\subsection{Association schemes}\label{subsect:AS}
We follow the standard notation 
and terminology from the theory 
of association schemes, see, e.g., 
\cite{BI}.
Let $X$ be a finite set of vertices
and a pair $(X,\{R_i\}_{i=0}^D)$ be
a symmetric \textbf{association scheme} of $D$ classes 
defined on $X$ with a set of binary (symmetric) relations $\{R_0,R_1,\ldots,R_D\}$. 
Let $\mathcal{A}$ denote the Bose-Mesner 
algebra of the scheme\footnote{We will often refer to an association scheme simply as a ``scheme''.}, generated 
by the adjacency matrices $A_i\in \mathbb{R}^{X\times X}$ of 
the relations $R_i$
($0 \le i \le D$). Recall that
\[A_iA_j=\sum_{k=0}^D p_{ij}^kA_k,\]
where $p_{ij}^k$ ($0 \le i,j,k \le D$) are nonnegative integers, called the \textbf{intersection numbers} of the scheme.
Let $E_0=\frac{1}{|X|}J_{|X|},E_1,\ldots,E_D$ 
denote the primitive idempotents 
of $\mathcal{A}$, which constitute 
another basis of the algebra, and recall that
\[
E_i\circ E_j=\frac{1}{|X|}\sum_{k=0}^D q_{ij}^kE_k,
\]
where $q_{ij}^k$ ($0 \le i,j,k \le D$) 
are nonnegative real numbers 
(see~\cite[Lemma~2.4]{D}), called 
the \textbf{Krein parameters} of the scheme.

Let $P$ and $Q$ denote the \textbf{first} and \textbf{second eigenmatrices} of the scheme, given by 
\[
(A_0, A_1, \ldots, A_d) = (E_0, E_1, \ldots, E_d)\cdot P\quad\text{and}\quad
Q=|X|\cdot P^{-1},
\]
and recall that the entries of $P$ are algebraic integers, as the $(j,i)$-entry of $P$ is an eigenvalue of $A_i$ for the eigenspace spanned by the columns of $E_j$.

A scheme 
is said to be \textbf{$Q$-polynomial} if, for some ordering of $E_1,\ldots,E_D$,
the matrix $L_1^*:=(q_{1j}^k)_{k,j=0}^D$
of  Krein parameters
is tridiagonal with nonzero superdiagonal and subdiagonal~%
\cite[p.~193]{BI}: then $q_{ij}^k = 0$ holds whenever the triple $(i, j, k)$
does not satisfy the triangle inequality
(i.e., when $|i-j| < k$ or $i+j > k$).
For a $Q$-polynomial association scheme,
set $a_i^*=q_{1,i}^i$, $b_i^*=q_{1,i+1}^i$, and $c_i^*=q_{1,i-1}^i$
These numbers are usually gathered in the \textbf{Krein array}
$\{b_0^*, b_1^*, \dots, b_{D-1}^*; c_1^*, c_2^*, \dots, c_D^*\}$,
as the remaining Krein parameters 
can be computed from them.
A $Q$-polynomial scheme is called $Q$-\textbf{antipodal} if $b_i^*=c_{D-i}^*$ 
for $0\leq i\leq D$, $i\ne \lfloor D/2 \rfloor$.
By Van Dam \cite{vD99}, 3-class $Q$-antipodal schemes are equivalent to linked systems of symmetric designs, see \cite{Kodalen}.

Given an association scheme on $X$ of $D$ classes, 
a triple of vertices $u, v, w \in X$ 
and integers $i$, $j$, $k$ ($0 \le i, j, k \le D$), 
denote by $\sV{u & v & w}{i & j & k}$ (or simply $[i\ j\ k]$ 
when it is clear which triple $(u,v,w)$ we have in mind) 
the number of vertices $x \in X$ such that
$(u, x) \in R_i$, $(v, x) \in R_j$ and $(w, x) \in R_k$.
These numbers are usually referred to as \textbf{triple intersection numbers}. 
The triple intersection numbers depend, 
in general, on the particular choice of $(u,v,w)$, and are not determined by the parameters (intersection numbers) of the scheme. 
Nevertheless, the following theorem 
often (especially, when the scheme is $Q$-polynomial) 
gives nontrivial equations with respect to triple intersection numbers.

\begin{theorem}{\rm (\cite[Theorem~3]{CJ}, 
cf.~\cite[Theorem~2.3.2]{BCN})}\label{thm:krein0}
Let $(X, \{R_i\}_{i=0}^D)$ be an association scheme of $D$ classes
with second eigenmatrix $Q$
and Krein parameters $q_{ij}^k$ $(0 \le i,j,k \le D)$.
Then
\[
q_{ij}^k = 0 \quad \Longleftrightarrow \quad
\sum_{r,s,t=0}^D Q_{ri}Q_{sj}Q_{tk}\sV{u & v & w}{r & s & t} = 0
\quad \mbox{holds for all\ } u, v, w \in X. \qedhere
\]
\end{theorem}

These equations together with the standard double counting 
that relates triple intersection numbers to the ordinary intersection numbers 
(see, e.g., \cite{CJ}) may reveal the nonexistence of 
a putative association scheme or shed light on its structure. 
This was first observed in \cite{CGS} for schemes of 2 classes (i.e., strongly regular graphs) 
and later used to show the nonexistence of some putative 
distance-regular graphs (see, e.g.,~\cite{CJ,JV,U}) and $Q$-polynomial association schemes~\cite{GVW}; see also 
\cite{GavrilyukLansdownMunemasaSuda2025} for triple intersection numbers in non-symmetric association schemes.

Given a $Q$-polynomial association scheme on $X$ of $D$ classes, a subset $C$ of $X$ is a \textbf{$t$-design} if its characteristic vector 
$\chi:=\chi_C$ satisfies  
$E_i\chi=0$ for all $1 \le i \le t$ \cite{D}. A subset $C$ of $X$ is said to have \textbf{degree} $s$ if its characteristic vector 
$\chi$ satisfies  
$s=|\{ i \in\{1,\ldots,D\} \mid \chi^\top A_i \chi\neq 0\}|$, i.e., $C$ is an $s$-distance set.

Let $G\in \mathbb{R}^{X\times X}$ be 
a matrix that diagonalizes all the adjacency matrices of the scheme 
(see \cite[p.~11]{D}). We write $G$ in a column-partitioned form $G=(G_0\ G_1\ \cdots\ G_D)$ such that 
$GG^\top=|X|I_{|X|}$ 
and $E_i=\frac{1}{|X|}G_iG_i^\top$ ($0 \le i \le D$).
Define the $i$-th \textbf{characteristic matrix} $H_i=H_i(C)$ 
of a nonempty subset $C$ of $X$ as the submatrix of $G_i$ formed by the rows indexed by $C$. 
(Throughout this paper, a subset $C$ of $X$ is always nonempty.)

Let $(X,\{R_i\}_{i=0}^d)$ and $(X,\{S_i\}_{i=0}^e)$ be association schemes. 
Then the latter is called 
a \textbf{fission scheme} of the 
former if there exists a partition $\{\Lambda_0,\Lambda_1,\ldots,\Lambda_d\}$ of $\{0,1,\ldots,e\}$ such that $\Lambda_0=\{0\}$ and  $\cup_{k\in\Lambda_i}S_{k}=R_i$ for each $i$.

\subsection{Hamming association schemes and orthogonal arrays}\label{subsect:HammingOA}
Let $H(n,q)$ denote the \textbf{Hamming association scheme} defined on $X=[q]^n$, 
$[q]=\{1,2,\ldots,q\}$, $q\geq 2$, 
with $R_i:=\{(x,y)\mid x,y\in X, \partial(x,y)=i\}$ for $0\leq i\leq n$, where 
$\partial(x,y)$ denotes the \textbf{Hamming distance} between $x,y\in X$. 

The Hamming scheme $H(n,q)$ is $Q$-polynomial with the second eigenmatrix 
$Q=(K_{n,q,j}(i))_{i,j=0}^n$, 
where $K_{n,q,j}(x)$ are 
Krawtchouk polynomials 
of degree $j$ given by
\[
K_{n,q,j}(x)=\sum_{\ell=0}^j(-1)^\ell (q-1)^{j-\ell}\binom{x}{\ell}\binom{n-x}{j-\ell}.
\]


Given an orthogonal array $\OA(N,n,q,t)$, 
the set $C$ of its $N$ rows can be naturally identified with a subset of the point set $X$ of $H(n,q)$. 
It follows from~\cite[Theorem~4.4]{D}
that $C$ is a $t$-design 
in the Hamming scheme $H(n,q)$ and vice versa. 
In what follows, we will identify an OA and the corresponding $t$-design $C$. 
An orthogonal array is said to be \textbf{tight} if it satisfies Eq.\ \eqref{eq:tight} or \eqref{eq:tighto} with equality. 

Given a set $C\subseteq X$, define $C_i$ to be
\begin{equation}\label{eq:Ci}
C_i=\{(x_2,\ldots,x_n)\mid (i,x_2,\ldots,x_n)\in C\}
\quad (1 \le i \le q).   
\end{equation}
It is known that a $(2e+1)$-design $C$ is tight 
in $H(n,q)$ if and only if, for 
every $i\in\{1,2,\ldots,q\}$, 
the set $C_i$ is a 
tight $2e$-design in $H(n-1,q)$. 

The \textbf{degree set} of an orthogonal array $C$ is the set $S(C)$ of Hamming distances between pairwise distinct $x,y\in C$,
and the \textbf{degree} $s$ of $C$ is defined as $s=|S(C)|$. We also define the \textbf{degree set between $C$ and $C'$} as the set $S(C,C')$ of Hamming distances $\partial(x,y)$ for all $x\in C,y\in C'$, and the \textbf{degree} $s(C,C')$ between $C$ and $C'$ 
is defined as $s(C,C')=|S(C,C')|$. 
The following lemma gives 
an upper bound on 
the cardinality of an orthogonal array. 
\begin{lemma}{\rm (\cite[Theorem~5.21]{D})}
    Let $C$ be a subset of the point set of $H(n,q)$ with degree $s$. Then
    \begin{align}\label{eq:tightcode}
    |C|\leq \sum_{k=0}^s \binom{n}{k}(q-1)^k. 
    \end{align}
\end{lemma}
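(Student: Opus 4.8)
The plan is to reprove this via Delsarte's polynomial-and-rank argument. Write $S(C)=\{i_1,\dots,i_s\}\subseteq\{1,\dots,n\}$ for the degree set of $C$ and introduce the \emph{annihilator polynomial}
\[
F(x)=\prod_{\ell\in S(C)}\frac{\ell-x}{\ell},
\]
a real polynomial of degree exactly $s$ with $F(0)=1$ and $F(\ell)=0$ for every $\ell\in S(C)$. Since the Krawtchouk polynomials $K_{n,q,0}(x),K_{n,q,1}(x),\dots,K_{n,q,s}(x)$ have degrees $0,1,\dots,s$ respectively, they form a basis of the space of real polynomials of degree at most $s$, so we may expand $F=\sum_{k=0}^s f_k K_{n,q,k}$ with $f_k\in\mathbb{R}$.

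First I would assemble the matrix $N:=\sum_{k=0}^s f_k E_k\in\mathbb{R}^{X\times X}$. Using $E_k=\frac{1}{q^n}\sum_{j=0}^n Q_{jk}A_j$ together with $Q_{jk}=K_{n,q,k}(j)$, the $(x,y)$-entry of $N$ equals $\frac{1}{q^n}\sum_{k=0}^s f_k K_{n,q,k}(\partial(x,y))=\frac{1}{q^n}F(\partial(x,y))$, so it depends only on $\partial(x,y)$. Let $N_C$ be the principal submatrix of $N$ with rows and columns indexed by $C$. For distinct $x,y\in C$ we have $\partial(x,y)\in S(C)$, hence $F(\partial(x,y))=0$, while the diagonal entries are $\frac{1}{q^n}F(0)=\frac{1}{q^n}$; therefore $N_C=\frac{1}{q^n}I_{|C|}$ and $\operatorname{rank} N_C=|C|$.

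On the other hand, writing $E_k=\frac{1}{q^n}G_kG_k^\top$ and restricting to the rows indexed by $C$, one gets $N_C=\frac{1}{q^n}\sum_{k=0}^s f_k H_kH_k^\top$, where $H_k=H_k(C)$ is the $k$-th characteristic matrix of $C$, which has $\operatorname{rank}E_k$ columns. Hence $\operatorname{rank} N_C\le\sum_{k=0}^s\operatorname{rank}E_k$, and comparing the two evaluations of $\operatorname{rank} N_C$ yields $|C|\le\sum_{k=0}^s\operatorname{rank}E_k$. To finish, I would record the standard computation $\operatorname{rank}E_k=\operatorname{tr}E_k=\sum_{x\in X}(E_k)_{xx}=K_{n,q,k}(0)=\binom{n}{k}(q-1)^k$, which turns this into the claimed bound.

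The argument is essentially bookkeeping once $F$ is in hand; the only point needing a moment's care is that $F$, having degree $s=|S(C)|$, lies in the span of $K_{n,q,0},\dots,K_{n,q,s}$ — this is precisely where the hypothesis that $C$ has degree $s$ is used to keep the rank estimate at $\sum_{k=0}^s\operatorname{rank}E_k$. No genuine obstacle arises here; this is \cite[Theorem~5.21]{D}, included for completeness since it underlies the definition of an extremal OA.
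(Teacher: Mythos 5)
Your argument is correct: it is precisely Delsarte's annihilator-polynomial rank argument, which is the proof of \cite[Theorem~5.21]{D} that the paper cites rather than reproves (note $0\notin S(C)$, so $F(0)=1$ is legitimate, and each $H_k$ indeed has $\operatorname{rank}E_k=\operatorname{tr}E_k=\binom{n}{k}(q-1)^k$ columns, so the two rank evaluations give the bound). Nothing further is needed.
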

Recall that an OA is \textbf{extremal} if it is a $(2s-1)$-design in $H(n,q)$ with degree $s$ for some $s$. The following inequality is well known. 
\begin{corollary}
    Let $C$ be a 
    $t$-design in $H(n,q)$ with degree $s$. Then $t\leq 2s$. 
\end{corollary}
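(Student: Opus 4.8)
The plan is to derive the bound from the linear-programming/Delsarte inequality \eqref{eq:tightcode} by a short counting argument exploiting the design condition. First I would recall the combinatorial meaning of the two hypotheses in terms of the characteristic vector $\chi = \chi_C$: the degree-$s$ condition says that $\chi^\top A_i \chi = 0$ for all but $s$ nonzero indices $i$, i.e. the support $S(C) = \{i_1, \dots, i_s\}$ of the ``distance distribution'' $a_i := \frac{1}{|C|}\chi^\top A_i \chi$ (for $i \ge 1$) has size exactly $s$; and the $t$-design condition says $E_j \chi = 0$ for $1 \le j \le t$, equivalently the dual distance distribution $a'_j := \frac{1}{|C|}\chi^\top E_j \chi$ (scaled suitably) vanishes for $1 \le j \le t$.

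The key step is the standard linear-programming duality manipulation. Expanding $0 = E_j \chi$ against $\chi$ via the second eigenmatrix $Q$ gives, for each $1 \le j \le t$,
\[
\sum_{i=0}^{n} Q_{ij}\, a_i = 0,
\]
while for $j = 0$ one has $\sum_i a_i = |C|$. Since $a_i = 0$ except for $i \in \{0\} \cup S(C)$, this is a homogeneous linear system in the $s+1$ unknowns $\{a_i : i \in \{0\}\cup S(C)\}$ (with $a_0 = 1$), consisting of $t$ equations from $j = 1, \dots, t$ plus the normalization. If $t \ge s+1$, one selects the $s$ rows $j = 1, \dots, s$ together with the normalization row $j=0$; the resulting $(s+1)\times(s+1)$ coefficient matrix is a submatrix of $Q$ consisting of rows $0,1,\dots,s$ and the columns indexed by $\{0\}\cup S(C)$. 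For a $Q$-polynomial scheme with $Q_{ij} = v_j^*(i)$ a polynomial of degree $j$ in a suitable variable (here the Krawtchouk polynomials $K_{n,q,j}$), this matrix is (up to row operations) a generalized Vandermonde matrix in the $s+1$ distinct nodes $\{0\}\cup S(C)$, hence nonsingular. This forces $a_i = 0$ for all $i \in S(C)$, contradicting degree $s \ge 1$ (which presupposes $|C| \ge 2$, so some $a_i \ne 0$). Therefore $t \le s$ is impossible to exceed in the sense that $t \ge s+1$ gives a contradiction; wait — one must be careful: the conclusion should be $t \le 2s$, not $t \le s$.

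Let me reconsider: the correct argument uses the Fisher-type inequality in its refined form. Here is the cleaner route, which I expect to be the actual proof. One uses the fact (Delsarte's inner distribution duality, \cite[Theorem~3.3]{D}) that the design strength and degree satisfy the same interlacing as in the Johnson/Hamming classical cases: if $C$ has degree $s$ and is a $t$-design, then considering the annihilator polynomial $F(x) = \prod_{i \in S(C)}(x - i)$ of degree $s$, the polynomial $F(x)^2$ has degree $2s$ and its expansion in the basis $\{v_j^*\}$ (equivalently, in Krawtchouk polynomials) has nonnegative coefficients $f_j \ge 0$ with $f_0 > 0$; pairing $F(\cdot)^2$ applied ``on the distance'' against the design yields
\[
0 < |C| f_0 \;=\; \sum_{j=0}^{2s} f_j \,\frac{\chi^\top E_j \chi}{|C|}\cdot |C| \;+\; (\text{something}) \;=\; \sum_{x,y \in C} F(\partial(x,y))^2 \;=\; |C|\, F(0)^2,
\]
since $F(\partial(x,y)) = 0$ whenever $x \ne y$. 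Expanding $\sum_{x,y}F(\partial(x,y))^2$ in primitive idempotents shows it equals $|C|^2 \sum_j f_j a'_j$ where $a'_j \ge 0$ and $a'_0 = |C|/|X| \cdot$(const); but if $t \ge 2s$ then $a'_j = 0$ for $1 \le j \le 2s \ge \deg(F^2)$, so only the $j=0$ term survives and the identity holds trivially. Hence the obstruction arises only when $t \ge 2s+1$: then pick instead the polynomial $G(x) = \prod_{i \in S(C)}(x-i) \cdot (x - i_0)$ for an extra node, or more simply observe that a $(2s+1)$-design of degree $s$ would, by the LP bound applied to a shifted polynomial, force $|C| \le \sum_{k=0}^{s-1}\binom{n}{k}(q-1)^k + (\text{smaller})$, contradicting the Fisher lower bound $|C| \ge \sum_{k=0}^{s}\binom{n}{k}(q-1)^k$ that a $2s$-design (a fortiori a $(2s+1)$-design) must satisfy.

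So the proof I would write: \emph{First}, a $(2s+1)$-design is in particular a $2s$-design, hence by Rao's bound \eqref{eq:tight} (equivalently the Fisher-type inequality for even-strength designs, \cite[Theorem~5.23]{D}) satisfies $|C| \ge \sum_{k=0}^{s}\binom{n}{k}(q-1)^k$. \emph{Second}, the degree-$s$ hypothesis and the LP bound \eqref{eq:tightcode} give $|C| \le \sum_{k=0}^{s}\binom{n}{k}(q-1)^k$, with equality iff $C$ is a tight $2s$-design — and a tight $2s$-design is \emph{not} a $(2s+1)$-design (e.g. because its dual degree is exactly $2s$, or by the explicit parameter constraints). \emph{Hence} strict inequality would be forced, a contradiction; so no $(2s+1)$-design of degree $s$ exists, i.e. $t \le 2s$. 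The main obstacle is pinning down cleanly why a tight $2s$-design cannot also be a $(2s+1)$-design — this follows from the fact that the inner distribution of a tight $2s$-design has dual degree exactly $2s$ (its external distribution is concentrated, and $\chi^\top E_{2s}\chi \ne 0$), which is part of Delsarte's structure theory and may be cited from \cite[Theorem~5.25]{D}.
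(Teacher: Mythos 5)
Your overall strategy -- playing a Rao-type lower bound on $|C|$ against the degree-$s$ upper bound \eqref{eq:tightcode} -- is the right one, and it is essentially what the paper does. However, your write-up has a genuine gap at the final step. You use only the even-strength Rao bound: a $(2s+1)$-design is a $2s$-design, hence $|C|\geq \sum_{k=0}^{s}\binom{n}{k}(q-1)^k$, which together with \eqref{eq:tightcode} forces equality, i.e.\ $C$ is a tight $2s$-design; you then still must show that a tight $2s$-design cannot have strength $2s+1$. The justification you offer for this last claim (``the inner distribution of a tight $2s$-design has dual degree exactly $2s$, \dots\ $\chi^\top E_{2s}\chi\neq 0$'') is not a proof: as written it is actually false, since $\chi^\top E_{2s}\chi\neq 0$ would contradict $C$ being a $2s$-design in the first place (you presumably mean $E_{2s+1}$, but that nonvanishing is precisely what needs to be proved), and \cite[Theorem~5.25]{D} does not assert it. So the argument, as it stands, is incomplete.

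The paper closes this gap in one stroke by invoking the \emph{odd}-strength Rao bound \eqref{eq:tighto} directly: if $t\geq 2s+1$, then with $e=s+1$ one gets
\[
|C|\;\geq\;\sum_{k=0}^{s}\binom{n}{k}(q-1)^k+\binom{n-1}{s}(q-1)^{s+1}\;>\;\sum_{k=0}^{s}\binom{n}{k}(q-1)^k\;\geq\;|C|,
\]
a contradiction (the middle term is positive since a $(2s+1)$-design forces $2s+1\leq n$). This avoids any discussion of tight designs. Incidentally, your own argument can be repaired the same way: the statement ``a tight $2s$-design is not a $(2s+1)$-design'' is itself most easily proved by exactly this strict inequality, so you may as well use \eqref{eq:tighto} from the start. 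I would also drop the first half of the proposal, where you attempt (and correctly abandon) an argument that would prove the false bound $t\leq s$; it adds confusion without contributing to the final proof.
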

\begin{proof}
    Immediately by Eqs.\ \eqref{eq:tight}, \eqref{eq:tighto} 
    and \eqref{eq:tightcode}. 
\end{proof}

The following lemma characterizes designs in terms of their characteristic matrices $H_i$ 
(see the definition of $H_i$ in Section \ref{subsect:AS}).
The subsequent lemma and theorems can be formulated for any $Q$-polynomial association scheme, 
but we state them only for $H(n,q)$.
\begin{lemma}{\rm (\cite[Theorem~3.15]{D})}\label{lem:cha}
Let $C$ be a subset of the point set of $H(n,q)$. The following conditions are equi\-valent:
\begin{enumerate}
\item $C$ is a $t$-design,
\item $H_k^\top H_\ell=\delta_{k\ell}|C|I \quad
\text{for} \quad 0\leq k+\ell\leq t$,
\item $\sum_{x,y\in C}K_{n,q,i}(\partial(x,y))=0$ for any $i\in\{1,2,\ldots,t\}$.
\end{enumerate}
\end{lemma}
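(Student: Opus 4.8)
The plan is to prove $(1)\Rightarrow(2)\Rightarrow(1)$ together with $(1)\Leftrightarrow(3)$; the implication $(1)\Rightarrow(2)$ carries all the weight, while the others are bookkeeping. Throughout, write $G=(G_0\ G_1\ \cdots\ G_n)$ with columns $g_i^{(a)}$ ($1\le a\le m_i$, $m_i:=\operatorname{rank}E_i$), so that $E_i=\tfrac{1}{|X|}G_iG_i^\top=\tfrac{1}{|X|}\sum_{a}g_i^{(a)}(g_i^{(a)})^\top$; since $GG^\top=|X|I$ and $G$ is square, also $G^\top G=|X|I$, i.e.\ $(g_k^{(a)})^\top g_\ell^{(b)}=|X|\,\delta_{k\ell}\delta_{ab}$, while $G_0$ is the all-ones vector $\mathbf 1$ (of length $|X|$) and $H_0=\mathbf 1_{|C|}$. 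The one identity to record at the start is that, with $\chi=\chi_C$ and $\circ$ the entrywise product of vectors,
\[
(H_k^\top H_\ell)_{ab}=\sum_{x\in C}g_k^{(a)}(x)\,g_\ell^{(b)}(x)=\chi^\top(g_k^{(a)}\circ g_\ell^{(b)}),
\]
so condition $(2)$ is exactly a statement about the inner products of $\chi$ with the Hadamard products $g_k^{(a)}\circ g_\ell^{(b)}$.

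The key step I would isolate is a degree bound: $g_k^{(a)}\circ g_\ell^{(b)}\in\bigoplus_{m=0}^{k+\ell}\operatorname{Im}E_m$ for all $a,b$. To prove it, expand $E_k\circ E_\ell$ entrywise to get $E_k\circ E_\ell=\tfrac{1}{|X|^2}\sum_{a,b}(g_k^{(a)}\circ g_\ell^{(b)})(g_k^{(a)}\circ g_\ell^{(b)})^\top$, so the column space of $E_k\circ E_\ell$ is exactly $\operatorname{span}\{g_k^{(a)}\circ g_\ell^{(b)}:a,b\}$; on the other hand $E_k\circ E_\ell=\tfrac{1}{|X|}\sum_m q_{k\ell}^m E_m$, and since $H(n,q)$ is $Q$-polynomial, $q_{k\ell}^m=0$ whenever $m>k+\ell$, so that column space lies in $\bigoplus_{m\le k+\ell}\operatorname{Im}E_m$. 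This is the heart of the matter and the only place that goes beyond formal linear algebra: it uses the vanishing of the Krein parameters outside the triangle range, which is precisely the $Q$-polynomial hypothesis on $H(n,q)$.

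Granting the lemma, $(1)\Rightarrow(2)$ is short: for $k+\ell\le t$, write $g_k^{(a)}\circ g_\ell^{(b)}=\sum_{m=0}^{k+\ell}E_m(g_k^{(a)}\circ g_\ell^{(b)})$ and pair with $\chi$; since $\chi^\top E_m=(E_m\chi)^\top$, every term with $1\le m\le k+\ell\ (\le t)$ vanishes by $(1)$, and the $m=0$ term is $\tfrac{|C|}{|X|}\,\mathbf 1^\top(g_k^{(a)}\circ g_\ell^{(b)})=\tfrac{|C|}{|X|}(g_k^{(a)})^\top g_\ell^{(b)}=|C|\,\delta_{k\ell}\delta_{ab}$, which is $(2)$. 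For $(2)\Rightarrow(1)$, take $\ell=0$: then $H_i^\top H_0=0$ for $1\le i\le t$ reads $G_i^\top\chi=0$, hence $E_i\chi=\tfrac{1}{|X|}G_i(G_i^\top\chi)=0$. Finally $(1)\Leftrightarrow(3)$: since $E_i=\tfrac{1}{|X|}\sum_j K_{n,q,i}(j)A_j$, we have $\chi^\top E_i\chi=\tfrac{1}{|X|}\sum_{x,y\in C}K_{n,q,i}(\partial(x,y))$, and as $E_i$ is a symmetric idempotent (hence positive semidefinite), $\chi^\top E_i\chi=\|E_i\chi\|^2$; so the vanishing of these sums for $1\le i\le t$ is equivalent to $E_i\chi=0$ for $1\le i\le t$.

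I expect the degree bound lemma to be the only genuine obstacle; everything downstream is manipulation. It is worth noting that this argument is purely formal in any $Q$-polynomial association scheme — nothing specific to Hamming is used except in the concrete form of $(3)$, where the Krawtchouk polynomials are just the eigenvalues entering the second eigenmatrix. (The statement and proof are Delsarte's; the above is his argument specialized to $H(n,q)$.)
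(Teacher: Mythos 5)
Your proof is correct: the degree bound $g_k^{(a)}\circ g_\ell^{(b)}\in\bigoplus_{m\le k+\ell}\operatorname{Im}E_m$ via $E_k\circ E_\ell=\frac{1}{|X|}\sum_m q_{k\ell}^m E_m$ and the vanishing of Krein parameters outside the triangle range is exactly the right engine, and the implications $(1)\Rightarrow(2)$, $(2)\Rightarrow(1)$ (taking $\ell=0$) and $(1)\Leftrightarrow(3)$ (via $\chi^\top E_i\chi=\|E_i\chi\|^2$) are all sound. The paper gives no proof of this lemma, citing Delsarte's Theorem~3.15 instead, and your argument is essentially Delsarte's original one specialized to $H(n,q)$, so there is nothing to reconcile.
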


\begin{theorem}{\rm (\cite[Theorem~5.21]{D})}\label{thm:wilson} 
Let $C$ be a tight $t$-design in $H(n,q)$ with degree set $S=\{\alpha_1,\ldots,\alpha_s\}$,  
where $\alpha_1<\cdots<\alpha_s$.
\begin{enumerate}
\item If $t=2e$ then $s=e$ and $|C|\prod_{i=1}^e (1-x/\alpha_i)=\sum_{j=0}^e K_{n,q,j}(x)$ holds.
\item If $t=2e-1$ then $s=e$, $\alpha_s=n$ and $\frac{|C|}{q}\prod_{i=1}^{e-1} (1-x/\alpha_i)=\sum_{j=0}^{e-1} K_{n-1,q,j}(x)$ holds.
\end{enumerate}
\end{theorem}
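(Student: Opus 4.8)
The plan is to prove part (1) by a direct spectral computation with the characteristic matrices $H_k=H_k(C)$, and part (2) by reducing to part (1) via contraction. So suppose first that $t=2e$. The Corollary above already gives $e\le s$, so what remains is to prove $s\le e$ and to identify the polynomial $R(x):=\sum_{j=0}^e K_{n,q,j}(x)$, which has degree exactly $e$ (its top-degree term comes from $K_{n,q,e}$). I would assemble $\mathbf H:=(H_0\ H_1\ \cdots\ H_e)$, a matrix of size $|C|\times\sum_{k=0}^e\binom{n}{k}(q-1)^k$. By Lemma~\ref{lem:cha}, $\mathbf H^\top\mathbf H=|C|\,I$ (since $H_k^\top H_\ell=\delta_{k\ell}|C|I$ whenever $k+\ell\le 2e$), so $\mathbf H$ has full column rank; this re-proves the Rao bound \eqref{eq:tight}, and tightness forces $\mathbf H$ to be square. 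A square matrix with $\mathbf H^\top\mathbf H=|C|I$ also satisfies $\mathbf H\mathbf H^\top=|C|I$, so reading off the $(x,y)$-entry — and using $E_k=\tfrac{1}{|X|}G_kG_k^\top$ together with $Q=(K_{n,q,j}(i))_{i,j}$ to obtain $(H_kH_k^\top)_{x,y}=K_{n,q,k}(\partial(x,y))$ — yields $R(\partial(x,y))=|C|\,\delta_{x,y}$ for all $x,y\in C$. In particular $R(0)=\sum_{k=0}^e\binom{n}{k}(q-1)^k=|C|\neq0$, while $R$ vanishes at each of the $s$ Hamming distances of $C$; a nonzero degree-$e$ polynomial cannot do that unless $s\le e$, so $s=e$, and comparing values at $0$ upgrades ``$R$ vanishes on $S$'' to $R(x)=|C|\prod_{i=1}^e(1-x/\alpha_i)$.

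For part (2), let $t=2e-1$. Since $C$ has strength $\ge1$, every contraction of $C$ has $|C|/q$ rows; and since $C$ is a tight $(2(e-1)+1)$-design, the contraction fact recalled above (together with the invariance of the design property and of tightness under permuting columns and symbols) shows that the contraction of $C$ at \emph{any} coordinate and \emph{any} symbol is a tight $(2e-2)$-design in $H(n-1,q)$. Applying part (1) to any such contraction, its degree set $B$ satisfies $\tfrac{|C|}{q}\prod_{\beta\in B}(1-x/\beta)=\sum_{j=0}^{e-1}K_{n-1,q,j}(x)$; the right-hand side is independent of the chosen coordinate and symbol, hence so is $B$. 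The key step is then: if $x\neq y$ in $C$ agree in some coordinate $j$, deleting coordinate $j$ places both of them in one contraction without changing their Hamming distance, so $\partial(x,y)\in B$; contrapositively, $\partial(x,y)\notin B$ forces $x,y$ to differ in every coordinate, i.e.\ $\partial(x,y)=n$. Thus $S\subseteq B\cup\{n\}$, so $s\le|B|+1=e$; combined with $s\ge e$ (again from $t\le 2s$) this gives $s=e$, forces $n\in S$ and $n\notin B$, and identifies $\{\alpha_1,\dots,\alpha_{e-1}\}=B$ and $\alpha_e=n$. Substituting back into the displayed identity for $B$ yields the claimed formula.

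I expect the only genuinely delicate point to be this ``contract at every coordinate'' observation in part (2): it is what controls the degree set $S$, and without it one obtains the polynomial identity for the contracted design but cannot match it to the distances $\alpha_1,\dots,\alpha_{e-1}$ of $C$ or conclude that $\alpha_e=n$. Everything else should be routine bookkeeping — the evaluations $K_{n,q,k}(0)=\binom{n}{k}(q-1)^k$, the identity $(H_kH_k^\top)_{x,y}=K_{n,q,k}(\partial(x,y))$, and the elementary fact that a square $\mathbf H$ with $\mathbf H^\top\mathbf H=cI$ satisfies $\mathbf H\mathbf H^\top=cI$.
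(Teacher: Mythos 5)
Your argument is correct, and since the paper only quotes this result from Delsarte without proof, there is nothing internal to compare it against: part (1) is exactly Delsarte's spectral argument (stacking the characteristic matrices $H_0,\ldots,H_e$, using Lemma~\ref{lem:cha} to get $\mathbf H^\top\mathbf H=|C|I$, and reading the annihilator polynomial $\sum_{j=0}^e K_{n,q,j}$ off $\mathbf H\mathbf H^\top=|C|I$), and part (2) is the standard reduction via contractions, using the paper's stated fact that tightness of a $(2e-1)$-design is equivalent to tightness of every contraction. Your handling of the one delicate point — that any two rows agreeing in some coordinate land in a common contraction at unchanged distance, so $S\subseteq B\cup\{n\}$, which with $e\le s$ forces $s=e$, $\alpha_s=n$ and $\{\alpha_1,\dots,\alpha_{e-1}\}=B$ — is sound.
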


The following theorem of Delsarte shows that 
a $t$-design $C$ in $H(n,q)$ with $t\geq 2s-2$ 
induces an $s$-class $Q$-polynomial association scheme (a {\bf Delsarte scheme}) 
whose binary relations are 
determined by the Hamming distances between its points. For a design $C$ with degree set 
$S(C)=\{\alpha_1,\ldots,\alpha_s\}$, 
we set $\alpha_0=0$ and define 
$S_i=\{(x,y)\in C\times C\mid \partial(x,y)=\alpha_i\}$ ($0 \le i \le s$).

\begin{theorem}{\rm (\cite[Theorem~5.25]{D})}\label{thm:t2s-2}
Let $C$ be a $t$-design in $H(n,q)$ with degree $s$ and degree set $S(C)=\{\alpha_1,\ldots,\alpha_s\}$.
If $t\geq 2s-2$, then the pair $(C,\{S_i\}_{i=0}^s)$ is a $Q$-polynomial association scheme of $s$ classes with the following second eigenmatrix:
\begin{align*}
Q=\begin{bmatrix}
1 & K_{n,q,1}(\alpha_0) & \cdots & K_{n,q,s-1}(\alpha_0) & |C|-\sum_{\ell=0}^{s-1}K_{n,q,\ell}(\alpha_0) \\
1 & K_{n,q,1}(\alpha_1) & \cdots & K_{n,q,s-1}(\alpha_1) & -\sum_{\ell=0}^{s-1}K_{n,q,\ell}(\alpha_1) \\
\vdots & \vdots & \ddots & \vdots& \vdots \\
1 & K_{n,q,1}(\alpha_s) & \cdots & K_{n,q,s-1}(\alpha_s) & -\sum_{\ell=0}^{s-1}K_{n,q,\ell}(\alpha_s)
\end{bmatrix}. 
\end{align*}
\end{theorem}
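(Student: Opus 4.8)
The plan is to realize the candidate scheme $(C,\{S_i\}_{i=0}^s)$ inside the restriction to $C$ of the Bose--Mesner algebra of $H(n,q)$, using the characteristic matrices $H_j=H_j(C)$; the point is that the hypothesis $t\ge 2s-2$ supplies exactly the orthogonality relations among the $H_j$ that this requires. Throughout, write $A'_k$ for the adjacency matrix of $S_k$, so that $A'_k=(A_{\alpha_k})_{C,C}$. Since $C$ has degree $s$, the restriction $(A_i)_{C,C}$ vanishes for every $i\notin\{\alpha_0,\dots,\alpha_s\}$, and the $A'_k$ are linearly independent $0$--$1$ matrices with pairwise disjoint supports summing to $J_{|C|}$, with $A'_0=I_{|C|}$. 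In particular the linear span $\mathcal A':=\operatorname{span}\{A'_0,\dots,A'_s\}$ has dimension $s+1$, contains $I$ and $J$, and is closed under Hadamard product.

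First I would produce orthogonal idempotents. Set $\widetilde F_j:=\tfrac1{|C|}H_jH_j^\top$ for $0\le j\le s-1$. Since $C$ is a $t$-design with $t\ge 2s-2$, Lemma~\ref{lem:cha} gives $H_k^\top H_\ell=\delta_{k\ell}|C|I$ whenever $k+\ell\le 2s-2$, hence for all $k,\ell\le s-1$; a one-line computation then shows that $\widetilde F_0,\dots,\widetilde F_{s-1}$ are mutually orthogonal idempotents, each nonzero because $\operatorname{tr}\widetilde F_j=\binom{n}{j}(q-1)^j>0$. Put $\widetilde F_s:=I_{|C|}-\sum_{j=0}^{s-1}\widetilde F_j$; it is an idempotent orthogonal to all the others, and it is nonzero since $\operatorname{tr}\widetilde F_s=|C|-\sum_{j=0}^{s-1}\binom nj(q-1)^j$, which is positive because $|C|$ exceeds the Rao bound $\sum_{j=0}^{s-1}\binom nj(q-1)^j$ — indeed $C$, being a design of degree $s$, is not a tight $(2s-2)$-design (a tight $2(s-1)$-design has degree $s-1$), and a design of strength $\ge 2s-1$ exceeds that bound outright. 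Thus $\widetilde F_0,\dots,\widetilde F_s$ are $s+1$ nonzero mutually orthogonal idempotents summing to $I_{|C|}$.

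Next I would check that every $\widetilde F_j$ lies in $\mathcal A'$. Restricting $E_i=\tfrac1{|X|}\sum_j Q_{ji}A_j$ to the rows and columns indexed by $C$ gives $(E_i)_{C,C}=\tfrac1{|X|}\sum_{k=0}^s Q_{\alpha_k,i}A'_k=\tfrac1{|X|}\sum_{k=0}^s K_{n,q,i}(\alpha_k)A'_k\in\mathcal A'$, and since $\widetilde F_j=\tfrac{|X|}{|C|}(E_j)_{C,C}$ for $j\le s-1$ while $\widetilde F_s=A'_0-\sum_{j<s}\widetilde F_j$, all $\widetilde F_j\in\mathcal A'$. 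Hence the $(s+1)$-dimensional space $\operatorname{span}\{\widetilde F_0,\dots,\widetilde F_s\}$ sits inside the $(s+1)$-dimensional space $\mathcal A'$, so the two coincide. Consequently $\mathcal A'$ is closed under matrix multiplication (being spanned by mutually orthogonal idempotents), which together with the properties listed above makes $(C,\{S_k\}_{k=0}^s)$ a symmetric association scheme of $s$ classes, with primitive idempotents $E'_j:=\widetilde F_j$ and with intersection numbers necessarily nonnegative integers.

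The second eigenmatrix is then read off directly: for $j\le s-1$ one has $E'_j=\tfrac1{|C|}\sum_{k=0}^s K_{n,q,j}(\alpha_k)A'_k$, giving the first $s$ columns, while $E'_s=A'_0-\sum_{j<s}\widetilde F_j=\tfrac1{|C|}\sum_{k=0}^s\bigl(|C|\,\delta_{k,0}-\sum_{\ell=0}^{s-1}K_{n,q,\ell}(\alpha_k)\bigr)A'_k$ gives the last column exactly as displayed. Finally, with the ordering $E'_0,\dots,E'_s$ the $Q$-polynomial property follows from the shape of this matrix: because $K_{n,q,1}(x)$ has degree $1$ in $x$ and the $\alpha_k$ are distinct, column $j$ ($0\le j\le s-1$) is a polynomial of degree $j$ in column $1$, and column $s$ is then forced to be a polynomial of degree exactly $s$ in column $1$ since $Q$ is invertible. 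I expect the only step with genuine content to be the passage from the design-orthogonality $H_k^\top H_\ell=\delta_{k\ell}|C|I$ ($k+\ell\le 2s-2$) to the algebra-closure of $\mathcal A'$: the threshold $t\ge 2s-2$ is precisely what makes $\widetilde F_0,\dots,\widetilde F_{s-1}$ idempotent, after which a dimension count finishes things, the one subtlety being the non-vanishing of $\widetilde F_s$.
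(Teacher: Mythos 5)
Your argument is correct and coincides with the proof the paper relies on: the theorem is quoted from Delsarte (\cite[Theorem~5.25]{D}) rather than proved in the paper, and Delsarte's proof --- which the paper itself mirrors when constructing the fission scheme via Lemma~\ref{lem:F} and Theorem~\ref{thm:qant4} --- is exactly your construction of the idempotents $\tfrac{1}{|C|}H_jH_j^\top$ from the orthogonality relations of Lemma~\ref{lem:cha}, combined with the dimension count inside the span of the restricted adjacency matrices. The two delicate points are also handled correctly: $\widetilde F_s\neq 0$ because equality in the Rao bound would make $C$ a tight $(2s-2)$-design of degree $s-1$ (Theorem~\ref{thm:wilson}), and the last column of $Q$ being a polynomial of degree exactly $s$ in the second column is forced by the invertibility of $Q$, which yields the $Q$-polynomial property in its standard (equivalent) polynomial formulation.
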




%

\section{Fission schemes for extremal orthogonal arrays}\label{sect:fission}

\subsection{Eigenmatrices}
According to Theorem~\ref{thm:t2s-2}, 
an extremal OA, that is, a $(2s-1)$-design with degree $s$ in $H(n,q)$ gives rise to a $Q$-polynomial association scheme of $s$ classes. 
In this section, we show that this scheme admits a fission scheme 
of at most $2s$ classes. 

Let $C$ be a $(2s-1)$-design in $H(n,q)$ with degree set $S(C)=\{\alpha_1,\ldots,\alpha_s\}$ and set $\alpha_0=0$.
Recall the definition of $C_i$, see Eq. \eqref{eq:Ci}. 
Note that $C_i$ is obtained 
from $C$ by deleting the first coordinate of the vectors 
with $x_1=i$ in $C$ and $|C_i|=|C|/q$ for each $i$.
Then $C=\bigcup_{i=1}^q \{i\}\times C_i$ holds.
We will construct an association scheme on the point set $\widetilde{C}=\bigcup_{i=1}^{q}C_i$.

Denote by $H_k^{(i)}$ the $k$-th characteristic matrix of $C_i$ in $H(n-1,q)$,
and observe that $C_i$ is a $(2s-2)$-design with degree $s_i$ in $H(n-1,q)$, where $s_i=s(C_i)$.
First we state the following lemma, which is crucial for constructing our scheme on $\widetilde{C}$.

\begin{lemma}\label{lem:F}
Let $C$ be a $(2s-1)$-design with degree $s$ in $H(n,q)$.
Define $F_\ell^{(i,j)}$ to be
\begin{align*}
F_\ell^{(i,j)}=\frac{1}{\sqrt{|C_i||C_j|}}H_\ell^{(i)}(H_\ell^{(j)})^\top
\quad (1 \le i,j \le q, \ \ell\in\{0,1,\ldots,s-1\})
\end{align*}
and
\begin{align*}
F_s^{(i,i)} = I-\sum_{k=0}^{s-1}F_k^{(i,i)} \quad (1 \le i \le q).
\end{align*}
Then
$F_\ell^{(i,j)}F_{\ell'}^{(j,k)} = \delta_{\ell\ell'}F_\ell^{(i,k)}$
holds for $1 \le i,j,k \le q$ and $\ell,\ell'\in\{0,1,\ldots,s-1\}$,
and $F_{s}^{(i,i)}F_{\ell}^{(i,j)}$ $= F_{\ell}^{(i,j)}F_{s}^{(j,j)}$ $= O$
holds for $1 \le i,j \le q$ and $\ell\in\{0,1,\ldots,s-1\}$.
\end{lemma}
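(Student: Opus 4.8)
The plan is to reduce everything to the characterization of designs by their characteristic matrices (Lemma~\ref{lem:cha}), applied inside the smaller Hamming scheme $H(n-1,q)$. The only external fact needed is the observation recorded just before the statement, namely that each $C_j$ is a $(2s-2)$-design in $H(n-1,q)$; by Lemma~\ref{lem:cha}(2) this is equivalent to the orthogonality relations
\[
(H_k^{(j)})^\top H_\ell^{(j)}=\delta_{k\ell}\,|C_j|\,I\qquad(0\le k+\ell\le 2s-2).
\]
Since the superscripts appearing in the definition of $F_\ell^{(i,j)}$ lie in $\{0,1,\dots,s-1\}$, any two of them add up to at most $2s-2$, so these are exactly the relations at our disposal. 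I do not expect a genuine obstacle here: the proof should be this one remark followed by routine bookkeeping. The only point that has to be watched is the range of the summation index, which is also precisely where the hypothesis that $C$ is a $(2s-1)$-design (rather than merely a $(2s-2)$-design) is used — it is what makes each $C_j$ a $(2s-2)$-design, so that all products $(H_\ell^{(j)})^\top H_{\ell'}^{(j)}$ with both $\ell,\ell'\le s-1$ are controlled.

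For the first identity I would substitute the definitions directly. For $\ell,\ell'\in\{0,\dots,s-1\}$ and $1\le i,j,k\le q$,
\begin{align*}
F_\ell^{(i,j)}F_{\ell'}^{(j,k)}
&=\frac{1}{\sqrt{|C_i||C_j|}\,\sqrt{|C_j||C_k|}}\;H_\ell^{(i)}\bigl((H_\ell^{(j)})^\top H_{\ell'}^{(j)}\bigr)(H_{\ell'}^{(k)})^\top\\
&=\delta_{\ell\ell'}\,\frac{|C_j|}{|C_j|\sqrt{|C_i||C_k|}}\;H_\ell^{(i)}(H_\ell^{(k)})^\top=\delta_{\ell\ell'}F_\ell^{(i,k)},
\end{align*}
where in the second step I used the orthogonality relations above to replace the bracketed factor by $\delta_{\ell\ell'}|C_j|I$, together with $\sqrt{|C_i||C_j|}\,\sqrt{|C_j||C_k|}=|C_j|\sqrt{|C_i||C_k|}$. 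The case $\ell=0$ needs no separate treatment, since $H_0^{(j)}=\mathbf{1}_{|C_j|}$ and the identity is then read in the obvious scalar sense.

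For the second identity I would use the first one together with the definition of $F_s^{(i,i)}$. Fixing $\ell\in\{0,\dots,s-1\}$ and $1\le i,j\le q$ and applying the first identity to the triple $(i,i,j)$, which gives $F_k^{(i,i)}F_\ell^{(i,j)}=\delta_{k\ell}F_\ell^{(i,j)}$, one gets
\begin{align*}
F_s^{(i,i)}F_\ell^{(i,j)}&=F_\ell^{(i,j)}-\sum_{k=0}^{s-1}F_k^{(i,i)}F_\ell^{(i,j)}\\
&=F_\ell^{(i,j)}-F_\ell^{(i,j)}=O,
\end{align*}
the sum collapsing to the single term $k=\ell$ because $\ell\le s-1$. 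The relation $F_\ell^{(i,j)}F_s^{(j,j)}=O$ follows symmetrically, by expanding $F_s^{(j,j)}$ and applying the first identity to the triple $(i,j,j)$ so that $F_\ell^{(i,j)}F_k^{(j,j)}=\delta_{\ell k}F_\ell^{(i,j)}$. This would complete the argument.
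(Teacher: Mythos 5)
Your proposal is correct and follows exactly the route the paper intends: its entire proof is ``Apply Lemma~\ref{lem:cha}'', and your argument is simply that application spelled out — each $C_j$ is a $(2s-2)$-design in $H(n-1,q)$, so $(H_\ell^{(j)})^\top H_{\ell'}^{(j)}=\delta_{\ell\ell'}|C_j|I$ for $\ell,\ell'\le s-1$, from which both identities follow by the direct computations you give. Nothing further is needed.
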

\begin{proof}
Apply Lemma~\ref{lem:cha}.
\end{proof}

Further, one can see that the degree set $S(\widetilde{C})$ is contained in 
$\{\alpha_i,\alpha_i-1\mid 1\leq i\leq s\}$ 
(as $\widetilde{C}=\bigcup_{i=1}^{q}C_i$ is a subset in $H(n-1,q)$).
Define $\widetilde{S}_{0,0},\widetilde{S}_{i,j}$ ($i\in\{0,1\},j\in\{1,2,\ldots,s\}$) by 
\begin{align*}
\widetilde{S}_{0,0}&=\{(x,y)\in\widetilde{C}\times \widetilde{C}\mid x,y\in C_k\text{ for some $k$ and } \partial(x,y)=0\},\\
\widetilde{S}_{0,j}&=\{(x,y)\in\widetilde{C}\times \widetilde{C}\mid x\in C_k,y\in C_\ell \text{ for some $k\ne\ell$, and }  \partial(x,y)=\alpha_{j}-1\},\\
\widetilde{S}_{1,j}&=\{(x,y)\in\widetilde{C}\times\widetilde{C}\mid  x,y\in C_k\text{ for some $k$, 
$x\ne y$
and }\partial(x,y)=\alpha_{j}\}.
\end{align*}
Note that $\widetilde{S}_{i,j}$ are symmetric relations on $\widetilde{C}$ that partition 
$\widetilde{C}\times \widetilde{C}$, 
and let $A_{i,j}$ denote the adjacency 
$(0,1)$-matrix of the 
binary relation $\widetilde{S}_{i,j}$.  
Further, define $A_\ell^{(i,j)}$ to be the submatrix of $A_{1,\ell}$ with the rows 
and columns restricted to $\widetilde{C}_i\times \widetilde{C}_j$ for distinct $i,j$.

Since, for $i\in \{1,2,\ldots,q\}$, 
$C_i$ is a $(2s-2)$-design 
with degree $s_i$ in $H(n-1,q)$, 
it follows that $s_i\leq s$; furthermore, $s-1\leq s_i$ holds 
by \cite[Theorem~5.21]{D}.  
Thus, $s_i\in \{s-1,s\}$. 
Set $s_{i,j}=s(C_i,C_j)$ 
for distinct  $i,j\in\{1,2,\ldots,q\}$. 

\begin{proposition}
Let $C$ be a $(2s-1)$-design  with degree $s$ in $H(n,q)$. Then, with the above notation,  $s_{i,j}=s$ for all distinct $i,j$, and one of the following holds: 
  \begin{enumerate}
\item $s_i=s-1$ for every $i$ (and $C$ is a tight $(2s-1)$-design), 
\item $s_i=s$ for every $i$ (and $C$ is a nontight $(2s-1)$-design). 
\end{enumerate}
\end{proposition}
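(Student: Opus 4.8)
The plan is to prove the two assertions of the proposition separately: the dichotomy for the $s_i$ (together with the tightness correspondence) follows by comparing the Rao bound \eqref{eq:tight} with the absolute bound \eqref{eq:tightcode}, while the equality $s_{i,j}=s$ is a ``bipartite'' version of the Delsarte argument that a strength‑$t$ design has degree at least $\lceil t/2\rceil$.

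\emph{The dichotomy and tightness.} Recall that each $C_i$ is a $(2s-2)$-design in $H(n-1,q)$ of degree $s_i\in\{s-1,s\}$ with $|C_i|=|C|/q$. Put $V=\sum_{k=0}^{s-1}\binom{n-1}{k}(q-1)^k$. Applying \eqref{eq:tight} to $C_i$ over $H(n-1,q)$ (with $e=s-1$) gives $|C_i|\ge V$, with equality precisely when $C_i$ is a tight $(2s-2)$-design. On the other hand, if $s_i=s-1$, then \eqref{eq:tightcode} applied to $C_i$ gives $|C_i|\le V$; hence in that case $|C_i|=V$, so $C_i$ is a tight $(2s-2)$-design and therefore has degree exactly $s-1$ (a tight $2e$-design has degree $e$). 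Consequently $s_i=s-1\iff|C_i|=V$ and $s_i=s\iff|C_i|>V$; since $|C_i|=|C|/q$ is independent of $i$, all of $s_1,\ldots,s_q$ coincide. If they all equal $s-1$, then all $C_i$ are tight $(2s-2)$-designs, so $C$ is a tight $(2s-1)$-design by the characterization of tight odd-strength designs through their contractions (with $e=s-1$); if they all equal $s$, then $C$ cannot be tight, for otherwise all $C_i$ would be tight $(2s-2)$-designs, forcing $s_i=s-1$.

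\emph{The equality $s_{i,j}=s$.} Fix distinct $i,j$, and for $0\le d\le n-1$ let $a_d=|\{(x,y)\in C_i\times C_j\mid\partial(x,y)=d\}|$ be the inner distribution of the pair $(C_i,C_j)$ in $H(n-1,q)$. Since $\partial\big((i,x),(j,y)\big)=1+\partial(x,y)$ for $x\in C_i$, $y\in C_j$, and $C$ has degree set $\{\alpha_1,\ldots,\alpha_s\}$, the sequence $(a_d)$ is supported on $Z:=\{\alpha_m-1\mid 1\le m\le s\}$, so $s_{i,j}=|\operatorname{supp}(a)|\le|Z|=s$. For the reverse inequality I would use the identity $\sum_d a_d K_{n-1,q,k}(d)=q^{n-1}\chi_{C_i}^\top E_k\chi_{C_j}$, coming from $E_k=\frac{1}{q^{n-1}}\sum_d K_{n-1,q,k}(d)A_d$ in $H(n-1,q)$; this vanishes for $1\le k\le 2s-2$ because $C_i$ is a $(2s-2)$-design, so $E_k\chi_{C_i}=0$, while $\sum_d a_d=|C_i||C_j|>0$. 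Suppose, for contradiction, that $|\operatorname{supp}(a)|\le s-1$, and set $\mu(x)=\prod_{z\in\operatorname{supp}(a)}(x-z)$, a monic polynomial of degree at most $s-1$; expanding $\mu(x)^2=\sum_{k=0}^{2s-2}c_k K_{n-1,q,k}(x)$ and pairing with $(a_d)$ gives $0=\sum_d a_d\mu(d)^2=c_0|C_i||C_j|$ (the left side is $0$ since $\mu$ vanishes on $\operatorname{supp}(a)$), whence $c_0=0$; but by Krawtchouk orthogonality $c_0=\frac{1}{q^{n-1}}\sum_{d=0}^{n-1}\binom{n-1}{d}(q-1)^d\mu(d)^2>0$ (as $\mu\not\equiv0$ on $\{0,\ldots,n-1\}$, its degree being at most $s-1<n$), a contradiction. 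Hence $\operatorname{supp}(a)=Z$ and $s_{i,j}=s$.

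\emph{Main obstacle and a caveat.} The dichotomy is routine once one has the Rao and absolute bounds in hand, so the real content is $s_{i,j}=s$; the key step I expect to be the main obstacle is recognizing that the bipartite inner distribution $(a_d)$ between two derived designs satisfies the same orthogonality relations (against $K_{n-1,q,1},\ldots,K_{n-1,q,2s-2}$) as the inner distribution of a single strength‑$(2s-2)$ design, so that the classical ``degree $\ge\lceil t/2\rceil$'' argument applies to it verbatim. One caveat: if $2s-1>n$ then a $(2s-1)$-design is forced to be all of $[q]^n$ and the statement holds trivially, while for $2s-1\le n$ one has $2s-2\le n-1$, which is exactly what the Krawtchouk basis and orthogonality used above require.
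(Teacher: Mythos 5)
Your proof is correct. The dichotomy part is essentially the paper's argument (the paper simply asserts that $s_i=s-1$ holds iff $C_i$ is a tight $(2s-2)$-design iff $C$ is a tight $(2s-1)$-design; your derivation via the Rao bound \eqref{eq:tight} combined with the absolute bound \eqref{eq:tightcode}, plus the observation that $|C_i|=|C|/q$ is independent of $i$, is exactly the justification being elided). Where you genuinely diverge is the claim $s_{i,j}=s$. The paper proves it by pure linear algebra on the matrices $F_\ell^{(i,j)}=\frac{1}{\sqrt{|C_i||C_j|}}H_\ell^{(i)}(H_\ell^{(j)})^\top$: using the orthogonality relations of Lemma \ref{lem:F} it shows that $F_0^{(i,j)},\ldots,F_{s-1}^{(i,j)}$ are linearly independent and span the same space as the bipartite distance matrices $A_\ell^{(i,j)}$, whence $s_{i,j}=\dim\mathcal{A}^{(i,j)}=s$. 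You instead run the classical Delsarte ``degree $\geq\lceil t/2\rceil$'' argument on the bipartite inner distribution $(a_d)$ of the pair $(C_i,C_j)$, using $\chi_{C_i}^\top E_k\chi_{C_j}=0$ for $1\le k\le 2s-2$ and Krawtchouk orthogonality to rule out $|\operatorname{supp}(a)|\le s-1$. Both are valid; your route is more elementary and self-contained (and identifies $\operatorname{supp}(a)=\{\alpha_m-1\}$ exactly), while the paper's route builds precisely the machinery (the $F_\ell^{(i,j)}$ and their multiplication rules) that it immediately reuses to construct the idempotents of the fission scheme in Theorem \ref{thm:qant4}, so nothing is wasted there. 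Your caveat about needing $2s-1\le n$ is moot, since a $t$-design in an $n$-class scheme presupposes $t\le n$.
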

\begin{proof}
The case $s_i=s-1$ occurs if and only if $C_i$ is a tight $(2s-2)$-design 
in $H(n-1,q)$, that is, $C$ is a tight $(2s-1)$-design.  
Therefore, $s_i=s-1$ occurs for some $i$ if and only if $s_j=s-1$ for every $j$. 
Thus, we have the two cases from the statement. 

Further, we claim that $s_{i,j}=s$ for all distinct $i,j$. 
For all distinct $i,j\in\{1,2,\ldots,q\}$, define 
\begin{align*}
\mathcal{A}^{(i,j)}=\text{span}\{A_{\ell}^{(i,j)} \mid 1\leq \ell \leq s\}. 
\end{align*} 
Then it is easy to see that $\dim \mathcal{A}^{(i,j)}=s_{i,j}$, and 
\begin{align*}
\mathcal{A}^{(i,j)}=\text{span}\{F_{\ell}^{(i,j)} \mid 0\leq \ell \leq s-1\}. 
\end{align*}
Note that the set $\{ F_{\ell}^{(i,j)} \mid \ell\in\{0,1,\ldots,s-1\}\}$ 
is linearly independent. 
Indeed, suppose 
$\sum_{\ell=0}^{s-1}c_\ell F_{\ell}^{(i,j)}=O$ for some scalars $c_\ell$. 
By Lemma \ref{lem:F}, multiplying the latter equality by 
$F_{m}^{(j,j)}$ for $m\in\{0,1,\ldots,s-1\}$ 
yields $c_m F_m^{(i,j)}=0$. 
Since each matrix $F_m^{(i,j)}$ is nonzero, $c_m=0$ holds. 
Therefore, $s_{i,j}=\dim \mathcal{A}^{(i,j)}=s$. 
\end{proof}

Define a matrix $M_{n-1,q}[x_1,\ldots,x_s]$ as follows:
\begin{align*}
M_{n-1,q}[x_1,\ldots,x_s]:=
\begin{bmatrix}
 K_{n-1,q,0}(x_1) &\cdots & K_{n-1,q,s-1}(x_1) \cr
 K_{n-1,q,0}(x_2) &\cdots & K_{n-1,q,s-1}(x_2) \cr
  \vdots &\ddots & \vdots  \cr
   K_{n-1,q,0}(x_s) &\cdots & K_{n-1,q,s-1}(x_s) 
\end{bmatrix}.
\end{align*}
The next theorem is main in this section. 
It shows that a $(2s-1)$-design $C$ 
with degree $s$ 
in $H(n,q)$ gives rise to an association scheme of $2s-1$ or $2s$ classes, which is the fission scheme 
of the Delsarte scheme of $C$.
\begin{theorem}\label{thm:qant4}
Let $C$ be a $(2s-1)$-design with degree $s$ in $H(n,q)$.
\begin{enumerate}
    \item If the design $C$ is tight, then $(\widetilde{C},\{\widetilde{S}_{0,0}\}\cup\{\widetilde{S}_{i,j} \mid i\in\{0,1\},j\in\{1,2,\ldots,s\}\}\setminus \{\widetilde{S}_{0,s}\})$
is an association scheme of $(2s-1)$ classes with the second eigenmatrix 
\begin{align*}
Q=
\begin{bmatrix}
 M_{n-1,q}[\alpha_0,\ldots,\alpha_{s-1}] & (q-1)M_{n-1,q}[\alpha_0,\ldots,\alpha_{s-1}] \cr
 M_{n-1,q}[\alpha_1-1,\ldots,\alpha_{s}-1] & (-1)M_{n-1,q}[\alpha_1-1,\ldots,\alpha_{s}-1] 
\end{bmatrix}. 
\end{align*}
\item If the design $C$ is not tight, then $(\widetilde{C},\{\widetilde{S}_{0,0}\}\cup\{\widetilde{S}_{i,j} \mid i\in\{0,1\},j\in\{1,2,\ldots,s\}\})$
is an association scheme of $2s$ classes with the second eigenmatrix  
\begin{align*}
Q=
\begin{bmatrix}
 M_{n-1,q}[\alpha_0,\ldots,\alpha_{s}] & \mathbf{c} & (q-1)M_{n-1,q}[\alpha_0,\ldots,\alpha_{s}] \cr
 M_{n-1,q}[\alpha_1-1,\ldots,\alpha_{s}-1] & O_{s\times 1} & (-1)M_{n-1,q}[\alpha_1-1,\ldots,\alpha_{s}-1] 
\end{bmatrix}, 
\end{align*}
where $\mathbf{c}=\begin{bmatrix}
 |C|-q\sum_{\ell=0}^{s-1}K_{n-1,q,\ell}(\alpha_0) \cr
 -q\sum_{\ell=0}^{s-1}K_{n-1,q,\ell}(\alpha_1) \cr
 \vdots \cr
 -q\sum_{\ell=0}^{s-1}K_{n-1,q,\ell}(\alpha_s)
\end{bmatrix}$.

\end{enumerate}
\end{theorem}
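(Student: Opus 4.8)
The plan is to exhibit explicitly the primitive idempotents of the putative scheme on $\widetilde{C}$, to check that they span the linear span $\mathcal{B}$ of the proposed adjacency matrices, and then to read off $Q$ from the expansions of these idempotents in the adjacency-matrix basis; closure of $\mathcal{B}$ under matrix multiplication, hence all the scheme axioms, will then follow automatically. The key preliminary observation is that for $0\le\ell\le s-1$ the matrix $F_\ell^{(i,j)}$ is, up to a scalar, the $C_i\times C_j$ submatrix of the $\ell$-th primitive idempotent $E_\ell$ of $H(n-1,q)$: since the rows of $H_\ell^{(i)}$ are rows of $G_\ell$ and $G_\ell G_\ell^{\top}=q^{n-1}E_\ell$,
\[
\bigl(F_\ell^{(i,j)}\bigr)_{x,y}=\frac{1}{\sqrt{|C_i||C_j|}}\,K_{n-1,q,\ell}\bigl(\partial(x,y)\bigr)=\frac{q}{|C|}\,K_{n-1,q,\ell}\bigl(\partial(x,y)\bigr)\qquad(x\in C_i,\ y\in C_j).
\]
Together with $F_s^{(i,i)}=I-\sum_{k=0}^{s-1}F_k^{(i,i)}$, this writes every $F_\ell^{(i,j)}$ and every $F_s^{(i,i)}$ as a Krawtchouk-weighted combination of the corresponding blocks of $A_{0,0}$, the $A_{1,m}$ and the $A_{0,j}$, with coefficients independent of the pair $(i,j)$; here one uses $|C_i|=|C|/q$ for all $i$ and invokes the Proposition together with Theorem~\ref{thm:t2s-2} to control the degree sets of the $C_i$ and of the pairs $C_i,C_j$, and in particular to pin down the ``last column'' of the second eigenmatrix of the Delsarte scheme of $C_i$ in the tight case.

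Using the block decomposition $\widetilde{C}=\bigcup_{i=1}^{q}C_i$, I would then define, for $0\le\ell\le s-1$, the matrix $\mathcal{F}_\ell$ with $(i,j)$-block $\tfrac1q F_\ell^{(i,j)}$, the block-diagonal matrix $\mathcal{D}_\ell$ with $(i,i)$-block $F_\ell^{(i,i)}$, and $\mathcal{E}_\ell=\mathcal{D}_\ell-\mathcal{F}_\ell$, together with the block-diagonal matrix $\mathcal{D}_s$ with $(i,i)$-block $F_s^{(i,i)}$ in the nontight case. A direct computation from Lemma~\ref{lem:F} shows that these matrices are symmetric, nonzero, idempotent and mutually orthogonal, and that they sum to $I$: in the tight case $\sum_{\ell=0}^{s-1}\mathcal{D}_\ell=I$ because each $C_i$ is a tight $(2s-2)$-design, so that $F_s^{(i,i)}=O$ (its first $s$ characteristic matrices being mutually orthogonal with column spaces summing, by tightness, to all of $\mathbb{R}^{C_i}$); in the nontight case $\sum_{\ell=0}^{s-1}\mathcal{D}_\ell+\mathcal{D}_s=I$. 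By the previous paragraph each of $\mathcal{F}_\ell$, $\mathcal{E}_\ell$, $\mathcal{D}_s$ is a fixed linear combination of $A_{0,0}$, the $A_{1,m}$ and the $A_{0,j}$; for instance
\[
|C|\,\mathcal{F}_\ell=K_{n-1,q,\ell}(\alpha_0)A_{0,0}+\sum_{m=1}^{s}K_{n-1,q,\ell}(\alpha_m)A_{1,m}+\sum_{j=1}^{s}K_{n-1,q,\ell}(\alpha_j-1)A_{0,j},
\]
with analogous formulas for $|C|\,\mathcal{E}_\ell$ and $|C|\,\mathcal{D}_s$; in the tight case $A_{1,s}=O$, as the relation $\widetilde{S}_{1,s}$ is empty because $\alpha_s=n$ by Theorem~\ref{thm:wilson}.

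To conclude, the nonempty relations among $\widetilde{S}_{0,0}$ and the $\widetilde{S}_{i,j}$ are pairwise distinct, so their adjacency matrices form a basis of $\mathcal{B}$; by the above, $\mathcal{B}$ also contains the $2s$ (tight) or $2s+1$ (nontight) mutually orthogonal idempotents, which are linearly independent and hence form another basis. Therefore $\mathcal{B}$ is closed under matrix multiplication, and, being closed under $\circ$ and under transpose, containing $I$, and spanned by $\{0,1\}$-matrices summing to $J$, it is the Bose-Mesner algebra of a symmetric association scheme whose primitive idempotents are precisely the $\mathcal{F}_\ell$, $\mathcal{E}_\ell$ (and $\mathcal{D}_s$). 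Ordering the relations as $A_{0,0},A_{1,1},\dots,A_{1,s},A_{0,1},\dots,A_{0,s}$ and the idempotents as $\mathcal{F}_0,\dots,\mathcal{F}_{s-1},\mathcal{D}_s,\mathcal{E}_0,\dots,\mathcal{E}_{s-1}$ (deleting $A_{1,s}$ and $\mathcal{D}_s$ in the tight case), the displayed formulas give exactly the asserted second eigenmatrix $Q$; moreover, under the natural identification $\widetilde{C}\leftrightarrow C$, the identities $S_\ell=\widetilde{S}_{1,\ell}\cup\widetilde{S}_{0,\ell}$ for $1\le\ell\le s$ and $S_0=\widetilde{S}_{0,0}$ exhibit the resulting scheme as a fission of the Delsarte scheme of $C$.

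Most of this is mechanical once Lemma~\ref{lem:F} is available: idempotency and orthogonality of the $\mathcal{F}_\ell,\mathcal{E}_\ell$, the dimension count, and the extraction of $Q$ are routine. The parts that need care are the two bookkeeping points in the first half --- verifying that the Krawtchouk coefficients expressing $F_\ell^{(i,j)}$ through the blocks of the $A$'s genuinely do not depend on $(i,j)$ (here the equal sizes $|C_i|=|C|/q$ and Theorem~\ref{thm:t2s-2} are essential), and correctly identifying which relation degenerates in the tight case ($\widetilde{S}_{1,s}$, because $\alpha_s=n$) so that the class count comes out as $2s-1$ rather than $2s$.
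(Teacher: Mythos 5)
Your proposal is correct and follows essentially the same route as the paper: your $\mathcal{F}_\ell$, $\mathcal{E}_\ell=\mathcal{D}_\ell-\mathcal{F}_\ell$ and $\mathcal{D}_s$ are exactly the paper's idempotents $E_{0,\ell}$, $E_{1,s-1-\ell}$ and $E_{0,s}$, and the closure-of-the-span argument via Lemma~\ref{lem:F} and the extraction of $Q$ are the same, with the only addition that you make the entrywise Krawtchouk identification of $F_\ell^{(i,j)}$ explicit (the paper leaves this implicit via the proof of Theorem~\ref{thm:t2s-2}). The one nominal discrepancy is that you delete $\widetilde{S}_{1,s}$ in the tight case while the statement deletes $\widetilde{S}_{0,s}$; this is not a gap on your side, since the paper's displayed definitions of $\widetilde{S}_{0,j}$ (cross-block, distance $\alpha_j-1$) and $\widetilde{S}_{1,j}$ (within-block, distance $\alpha_j$) are swapped relative to how these labels are used in the statement, the proof and Section~\ref{sect:tight3}, and your identification of the empty relation as the within-block one at distance $\alpha_s=n$ is the mathematically correct reading, matching the displayed eigenmatrix.
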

\begin{proof}
It follows from Theorem~\ref{thm:t2s-2} that each $C_i$, being a $(2s-2)$-design 
with degree $s_i$ in $H(n-1,q)$, defines an association scheme of $s_i$ classes, 
whose relations are determined by the Hamming distances from the degree set $S(C_i)$.
Furthermore, it follows from the proof of Theorem~\ref{thm:t2s-2} that 
the primitive idempotents of this scheme are defined in Lemma \ref{lem:F} as follows:
\[
F_{0}^{(i,i)},F_{1}^{(i,i)},\ldots,F_{s-1}^{(i,i)},\quad\text{and, if $s_i=s$,}\quad
F_{s}^{(i,i)}=I-\sum_{k=0}^{s-1}F_{k}^{(i,i)},
\]
while in the case $s_i=s-1$ the matrix $F_s^{(i,i)}$ is zero (and hence 
not an idempotent).

Now we define $E_{0,0},E_{0,1},\ldots,E_{0,s},E_{1,0},\ldots,E_{1,s-1}$ as follows:
\begin{align*}
E_{0,i}&=\frac{1}{q}\begin{bmatrix}
F_{i}^{(1,1)} & F_{i}^{(1,2)} & \cdots & F_{i}^{(1,q)} \\
F_{i}^{(2,1)} & F_{i}^{(2,2)} & \cdots & F_{i}^{(2,q)} \\
\vdots & \vdots & \ddots & \vdots \\
F_{i}^{(q,1)} & F_{i}^{(q,2)} & \cdots & F_{i}^{(q,q)}
\end{bmatrix} \text{ for } 
0\leq i\leq s-1,\\
E_{0,s}&=\phantom{\frac{q}{q}}
\begin{bmatrix}
F_{s}^{(1,1)} & O & \cdots & O \\
O & F_{s}^{(2,2)} & \cdots & O \\
\vdots & \vdots & \ddots & \vdots \\
O & O & \cdots & F_{s}^{(q,q)}
\end{bmatrix},\\
E_{1,s-1-i}&=\frac{1}{q}\begin{bmatrix}
(q-1)F_{i}^{(1,1)} & -F_{i}^{(1,2)} & \cdots & -F_{i}^{(1,q)} \\
-F_{i}^{(2,1)} & (q-1)F_{i}^{(2,2)} & \cdots & -F_{i}^{(2,q)} \\
\vdots & \vdots & \ddots & \vdots \\
-F_{i}^{(q,1)} & -F_{i}^{(q,2)} & \cdots & (q-1)F_{i}^{(q,q)}
\end{bmatrix} \text{ for } 
0\leq i\leq s-1.
\end{align*}

Note that each $E_{i,j}$, possibly except for $E_{0,s}$, is a nonzero matrix, and $E_{0,s}$ is a zero matrix if and only if the degree of $C_i$ is $s-1$.

Recall that $A_{i,j}$ is the adjacency matrix of the binary relation $\widetilde{S}_{i,j}$ on $\widetilde{C}$, 
for $(i,j)\in \{(k,\ell)\mid 0\leq k\leq 1,1 \le \ell \le s\}$, and let $\mathcal{A}$ denote the vector space spanned by all $A_{i,j}$ over $\mathbb{R}$.
Since the matrices
\begin{align*}
\begin{bmatrix}
F_{i}^{(1,1)} & O & \cdots & O \\
O & F_{i}^{(2,2)} & \cdots & O \\
\vdots & \vdots & \ddots & \vdots \\
O & O & \cdots & F_{i}^{(q,q)}
\end{bmatrix},\quad
\begin{bmatrix}
O & F_{i}^{(1,2)} & \cdots & F_{i}^{(1,q)} \\
F_{i}^{(2,1)} & O & \cdots & F_{i}^{(2,q)} \\
\vdots & \vdots & \ddots & \vdots \\
F_{i}^{(q,1)} & F_{i}^{(q,2)} & \cdots & O
\end{bmatrix}
\end{align*}
are written as linear combinations of $A_{0,0},A_{0,1},\ldots,A_{0,s},A_{1,1},\ldots,A_{1,s}$, so are the matrices $E_{0,0},E_{0,1},\ldots,E_{0,s},E_{1,0},\ldots,E_{1,s-1}$. 
From Lemma~\ref{lem:F}, it follows that the nonzero matrices from $E_{0,0},E_{0,1},\ldots,E_{0,s}$, $E_{1,0},\ldots,E_{1,s-1}$ are 
mutually orthogonal idempotents.
Thus, $\mathcal{A}$ is closed under the matrix multiplication, and 
the set $\widetilde{C}$ with the binary relations as defined in the statement 
of the theorem form an association scheme 
with primitive idempotents 
$E_{0,0},E_{0,1},\ldots,E_{0,s-1},E_{1,0},\ldots,E_{1,s-1}$ and 
with $E_{0,s}$ if $s_i=s$.

Next, we determine the second eigenmatrix $Q$ of this scheme. 
\begin{enumerate}
\item 
If $s_i=s-1$, then $Q$ is defined by the following blocks: 
\begin{align*}
\bbordermatrix{& E_{0,j}  & E_{1,j'} \cr
A_{0,i} & K_{n-1,q,j}(\alpha_i) & (q-1)K_{n-1,q,j'}(\alpha_i) \cr
A_{1,i'} & K_{n-1,q,j}(\alpha_{i'}-1) & -K_{n-1,q,j'}(\alpha_{i'}-1) }
\end{align*}
where $i\in\{0,1,\ldots,s-1\},i'\in\{1,\ldots,s\},j,j'\in\{0,1,\ldots,s-1\}$ and $S(C_i)=\{\alpha_1,\ldots,\alpha_{s-1}\}$, $S(C_i,C_j)=\{\alpha_1-1,\ldots,\alpha_s-1\}$, where $\alpha_s=n$. 
\item 
If $s_i=s$, then $Q$ is defined by the following blocks: 
\begin{align*}
\bbordermatrix{& E_{0,j}  & E_{0,s} & E_{1,j'} \cr
A_{0,0} & K_{n-1,q,j}(\alpha_0) & |C|-q\sum_{\ell=0}^{s-1}K_{n-1,q,\ell}(\alpha_0) & (q-1)K_{n-1,q,j'}(\alpha_0) \cr
A_{0,i} & K_{n-1,q,j}(\alpha_i) & -q\sum_{\ell=0}^{s-1}K_{n-1,q,\ell}(\alpha_i) & (q-1)K_{n-1,q,j'}(\alpha_i) \cr
A_{1,i'} & K_{n-1,q,j}(\alpha_{i'}-1) & 0& -K_{n-1,q,j'}(\alpha_{i'}-1) }
\end{align*}
where $i,i'\in\{1,\ldots,s\},j,j'\in\{0,1,\ldots,s-1\}$ and $S(C_i)=\{\alpha_1,\ldots,\alpha_s\}$, $\alpha_0=0$, $S(C_i,C_j)=\{\alpha_1-1,\ldots,\alpha_s-1\}$. 
\end{enumerate}
The lemma is proved.
\end{proof}

\subsection{Application: tight $3$-designs}\label{sect:tight3}
In this section, we apply Theorem~\ref{thm:qant4} in the case $s=2$. 
Assume that there exists a $3$-design $C$ with degree $2$ in $H(n,q)$. 
Each derived design $C_i$ is a $2$-design with degree at most $2$ in $H(n-1,q)$. 
The fission scheme of the Delsarte scheme of $C$, 
constructed by Theorem~\ref{thm:qant4}, 
is of 
$3$ classes or $4$ classes depending 
on whether $C$ is tight or not. We will discuss the nontight case in the Appendix \ref{app:A}.

Suppose now that the design $C$ is 
tight and set $S(C)=\{\alpha_1,\alpha_2\}$ and  $\alpha_0=0$.  
Then the $3$-class fission scheme of $C$ has the second eigenmatrix $Q$ given by: 
\begin{align*}
Q=\bbordermatrix{& E_{0,0}  & E_{0,1}  & E_{1,0} & E_{1,1} \cr
A_{0,0} & 1 & K_{n-1,q,1}(\alpha_0) & (q-1)K_{n-1,q,1}(\alpha_0) & q-1 \cr
A_{0,1} & 1 & K_{n-1,q,1}(\alpha_1) & (q-1)K_{n-1,q,1}(\alpha_1) & q-1 \cr
A_{1,1} & 1 & K_{n-1,q,1}(\alpha_1-1)  & -K_{n-1,q,1}(\alpha_1-1) & -1 \cr
A_{1,2} & 1 & K_{n-1,q,1}(\alpha_2-1)  & -K_{n-1,q,1}(\alpha_2-1) & -1 }.
\end{align*} 

It is shown in \cite{N1986} that $(|C|,n,q,\alpha_1,\alpha_2)$ is $(2n,n,2,n/2,n)$ 
with $n\equiv0\pmod{4}$ or $(q^3,q+2,q,q,q+2)$ with $q$ even. 
In the former case, the existence 
of $C$ 
is equivalent to that of a Hadamard matrix 
of order $n$; the 3-class fission scheme 
corresponds to a distance-regular graph 
defined by the Hadamard matrix, see \cite[Theorem 1.6.1]{BCN}.

In the latter case, a construction is known \cite[Section~5]{BB} 
for every $q$ being a power of $2$, in which case each $C_i$ is a complete set of mutually orthogonal Latin squares of order $q$. 

Consider further the case $(|C|,n,q)=(q^3,q+2,q)$ with $q$ even. 
We have $\alpha_1=q,\alpha_2=q+2$, 
and the $3$-class fission scheme of $C$ has the 
following second eigenmatrix $Q$ and matrix $L_1^*$ 
(see the definition of $L_1^*$ in Section \ref{subsect:AS}): 
\begin{align}
Q&=\left[
\begin{array}{cccc}
 1 & q^2-1 & (q-1)^2 (q+1) & q-1 \\
 1 & -1 & 1-q & q-1 \\
 1 & q-1 & 1-q & -1 \\
 1 & -q-1 & q+1 & -1 \\
\end{array}
\right],\label{eq:eigen} \\
L_1^*&=\left[
\begin{array}{cccc}
 0 & q^2-1 & 0 & 0 \\
 1 & q-2 & (q-1) q & 0 \\
 0 & q & (q-2) (q+1) & 1 \\
 0 & 0 & q^2-1 & 0 \\
\end{array}\nonumber
\right],
\end{align}
which means that the fission scheme is $Q$-polynomial and has the Krein array 
$\{q^2-1,(q-1)q,1;1,q,q^2-1\}$. 

\begin{theorem}\label{thm:main}
Suppose there exists a $Q$-polynomial association scheme with the Krein array 
$\{q^2-1,(q-1)q,1;
1,q,q^2-1\}$. 
If $q>2$, then $q\equiv 0,1\pmod{4}$. 
\end{theorem}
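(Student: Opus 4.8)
The plan is to treat the hypothetical scheme purely through its parameters and to exclude the residues $q\equiv 2,3\pmod 4$, combining integrality of ordinary intersection numbers with a triple intersection number analysis, in the spirit of the completion of the strength-$4$ case in \cite{GSV}.

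First I would record all the numerical data. A $3$-class $Q$-polynomial scheme is determined up to its parameter set by its Krein array, so its second eigenmatrix is the matrix $Q$ of Eq.~\eqref{eq:eigen}, its order is $|X|=q^3$, its valencies are $1,\ q^2-1,\ \tfrac12q(q^2-1),\ \tfrac12q(q-1)^2$, and every $p_{ij}^k$ is the rational function of $q$ read off from $P=q^3Q^{-1}$. Next I would locate the vanishing Krein parameters: the matrix $L_1^*$ gives $q_{11}^3=0$, while a short computation of $L_2^*$ (equivalently, the relation $a_i^*+b_i^*+c_i^*=b_0^*$ with $b_3^*=0$) gives $a_3^*=q_{13}^3=0$ and $q_{23}^3=0$. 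Since $m_iq_{jk}^i=m_jq_{ik}^j=m_kq_{ij}^k$, vanishing of Krein parameters is symmetric in all three indices, so these say $q_{113}=q_{133}=q_{233}=0$. The only other parameters that could vanish are $q_{11}^1=q-2$, $q_{22}^1=(q-1)(q-2)(q+1)$, $q_{22}^3=(q-2)(q^2-1)$ and $q_{33}^3=q-2$, all of which are positive precisely because $q>2$; this is the one place the hypothesis is used, and it guarantees that $q_{113},q_{133},q_{233}$ (with the trivial $q_{ij}^0$, $i\neq j$) are exactly the zero Krein parameters.

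The residue $q\equiv 3\pmod 4$ is killed by a single intersection number: one computes $p_{22}^2=\tfrac14q(q-2)(q+3)$, which is not an integer when $q\equiv 3\pmod4$. To exclude $q\equiv 2\pmod4$ I would pass to triple intersection numbers. As $p_{22}^2>0$ for $q>2$, pick vertices $u,v,w$ with $(u,v),(u,w),(v,w)\in R_2$; by symmetry of this configuration the numbers $[r\ s\ t]$ (with respect to $(u,v,w)$) are symmetric in $r,s,t$. I would assemble the linear system they satisfy: the boundary values $[0\ s\ t],[r\ 0\ t],[r\ s\ 0]$ read off from $(u,v,w)$; the marginal identities $\sum_t[r\ s\ t]=p_{rs}^2$ together with its permutations (ordinary double counting); and the three equations supplied by Theorem~\ref{thm:krein0} applied to $q_{113}=q_{133}=q_{233}=0$. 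Since $p_{11}^2=0$ (the relation $R_1$ is a disjoint union of $q$ cliques of size $q^2$), the entries $[1\ 1\ t]$ vanish at once, and solving what remains leaves at most a one-parameter family. Reading off a determined combination of the $[r\ s\ t]$ one expects a value of the shape $\tfrac14q(q-1)+(\text{an integer})$, whose integrality forces $4\mid q(q-1)$, i.e.\ $q\equiv 0,1\pmod4$; with the previous step this excludes $q\equiv 2$ and finishes the proof. (If a free parameter survives, non-negativity of all $[r\ s\ t]$ should confine it to a short interval and give the same divisibility.)

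The hard part is the triple intersection number computation itself — running the double counting and the three Krein equations with coefficients polynomial in $q$, checking consistency for the admissible $q$, and verifying that the system pins the $[r\ s\ t]$ down tightly enough to extract exactly $4\mid q(q-1)$. A secondary concern is the bookkeeping of zero Krein parameters: one must be certain that no further $q_{ij}^k$ vanishes for $q>2$, since an extra zero would change both the available equations and the conclusion, $q=2$ (where several of them vanish) being genuinely exceptional. If the configuration above proves insufficient, I would add a second one, e.g.\ with pairwise relations $R_2,R_2,R_3$ (admissible since $p_{22}^3>0$ for $q>2$), and use compatibility of the two systems.
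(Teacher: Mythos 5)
Your plan is essentially the paper's proof: the paper uses $p_{2,2}^2=\tfrac14 q(q+3)(q-2)>0$ to obtain three points pairwise in relation $R_2$ and then computes (via the sage-drg package) the triple intersection numbers of that configuration, finding that the double-counting and vanishing-Krein-parameter equations force $[1\ 2\ 3]=\tfrac14(q^2-q)$, whose integrality gives $q\equiv 0,1\pmod 4$ --- exactly the quantity and conclusion you predicted, so the only thing separating your proposal from the paper's argument is actually running the computation you defer. Two small remarks: your preliminary exclusion of $q\equiv 3\pmod 4$ via integrality of $p_{2,2}^2$ is correct but becomes redundant once $[1\ 2\ 3]$ is computed, and the symmetry of $[r\ s\ t]$ in $r,s,t$ for a fixed triple is not automatic (it corresponds to permuting $(u,v,w)$), though it is not needed since the equations determine $[1\ 2\ 3]$ directly.
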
 
\begin{proof}
Assume $q>2$. 
Computing the intersection numbers shows that $p_{2,2}^2=\frac{1}{4}q(q+3)(q-2)>0$; hence there exist three points of the scheme in pairwise relation $R_2$. 
Computing (using, e.g., the sage-drg package \cite{JV}) 
the triple intersection numbers 
with respect to these three points, we get 
$[1,2,3]=\frac{1}{4}(q^2-q)$, 
which thus must be an integer. 
Therefore, $q\equiv 0,1\pmod{4}$ as desired. 
\end{proof}
\begin{corollary}\label{cor:tight3}
Suppose there exists a tight $3$-design in $H(q+2,q)$. 
If $q>2$, then $q$ is a multiple of four. 
\end{corollary}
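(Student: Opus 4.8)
The plan is to synthesize three facts that have already been established in this section, together with the classification of tight $3$-designs quoted earlier. First I would invoke Result~\ref{odd-t}(1): a tight $3$-design in $H(q+2,q)$ is a tight $\OA(N,q+2,q,3)$, so it is either of type $(2n,n,2)$ with $n\equiv0\pmod4$ or of type $(q^3,q+2,q)$ with $q$ even. The first type would force $q=2$ (from $q+2=n$ and $q=2$), which is excluded by the hypothesis $q>2$; hence we are in the second case, and in particular $q$ is even and $(|C|,n)=(q^3,q+2)$, with degree set $S(C)=\{\alpha_1,\alpha_2\}=\{q,q+2\}$.

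Next I would specialize Theorem~\ref{thm:qant4}(1) to $s=2$: since $C$ is a tight $3$-design, its Delsarte scheme admits a $3$-class fission scheme $\mathcal{S}$ on $\widetilde{C}$, whose second eigenmatrix is the one displayed just before Theorem~\ref{thm:main}. Plugging $\alpha_1=q$, $\alpha_2=q+2$ into that eigenmatrix and computing the Krein matrix $L_1^*$ as in Eq.~\eqref{eq:eigen}, one sees that $L_1^*$ is tridiagonal with nonzero off-diagonal entries, so $\mathcal{S}$ is $Q$-polynomial with Krein array $\{q^2-1,(q-1)q,1;1,q,q^2-1\}$. With this in hand, Theorem~\ref{thm:main} applies directly and yields $q\equiv0$ or $1\pmod4$ (using $q>2$). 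Finally I would combine the two congruence constraints: $q$ even together with $q\equiv0,1\pmod4$ leaves only $q\equiv0\pmod4$, i.e.\ $q$ is a multiple of four, as claimed.

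I do not expect any real obstacle here: the corollary is essentially a bookkeeping step that merges the arithmetic output of Theorem~\ref{thm:main} with the parity forced by Noda's classification. The only point requiring a moment's care is checking that the hypotheses of Theorem~\ref{thm:main} are genuinely met — namely that the fission scheme $\mathcal{S}$ exists and is $Q$-polynomial with exactly the stated Krein array — but this is precisely what Theorem~\ref{thm:qant4}(1) and the explicit computation of $Q$ and $L_1^*$ preceding Theorem~\ref{thm:main} provide, so no new work is needed.
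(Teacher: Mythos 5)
Your proposal is correct and follows essentially the same route as the paper: apply Theorem~\ref{thm:qant4} to obtain the $3$-class fission scheme with the second eigenmatrix of Eq.~\eqref{eq:eigen} (hence the Krein array $\{q^2-1,(q-1)q,1;1,q,q^2-1\}$), invoke Theorem~\ref{thm:main} to get $q\equiv 0,1\pmod 4$, and combine this with the evenness of $q$ from Noda's classification. The only difference is that you spell out the case analysis of Result~\ref{odd-t}(1) and the verification of the Krein array, which the paper leaves implicit in the computation preceding Theorem~\ref{thm:main}.
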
 
\begin{proof}
By Theorem~\ref{thm:qant4}, 
a tight $3$-design $C$ in $H(q+2,q)$
gives rise to the fission scheme 
with the second eigenmatrix in Eq. \eqref{eq:eigen}. 
Since $q$ is even by the result of \cite{N1986}, $q$ must be a multiple of four by Theorem~\ref{thm:main}. 
\end{proof}

\begin{remark}
    The scheme from Theorem \ref{thm:main} corresponds to a linked system of symmetric designs with parameters $v=q^2$, $k=q(q-1)/2$, $\lambda=q(q-2)/4$ by \cite{vD99}. The examples are known for $q=2t$, see Manon's family in \cite{Kodalen}. The linked systems of symmetric designs is of ``optimistic" type in the sense of \cite{Kodalen}, and \cite[Theorem~7.5]{Kodalen} shows that $q$ is a multiple of $4$, which is stronger than Theorem~\ref{thm:main}.  
     
\end{remark}

\section{Hamming distances in extremal orthogonal arrays}\label{Section4}

\subsection{Inequalities}\label{sect:ISineq}
In this section, we prove the following theorem, which can be seen 
as a counterpart of Result 
\ref{theo:ISineq} for extremal 
combinatorial designs.

\begin{theorem}\label{thm:e2}
Let $C$ be a $(2s-1)$-design in $H(n,q)$ with degree $s$. 
Define 
$S'(C)=\{n-\partial(x,y)\mid x,y\in C,x \neq y\}$ and set $S'(C)=\{x_1,\ldots,x_s\}$ with $x_1<\cdots<x_s$. 
Then 
\begin{align*}
\frac{(s-1)(n-s)}{q}+\frac{s(s-1)}{2}\leq \sum_{i=1}^sx_i\leq \frac{s(n-s)}{q}+\frac{s(s-1)}{2},     
\end{align*}
with equality in the left if and only if $x_1=0$, and with equality in the right if and only if $C$ is a tight $2s$-design.  
\end{theorem}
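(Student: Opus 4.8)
The plan is to run Delsarte's linear programming method for $H(n,q)$, in parallel with the proof of Result~\ref{theo:ISineq}. For a polynomial $p$ write $\overline{p}:=q^{-n}\sum_{i=0}^{n}\binom{n}{i}(q-1)^i p(i)$ for its average over $H(n,q)$; this is exactly the coefficient of $K_{n,q,0}$ in the Krawtchouk expansion of $p$. Expanding $p$ in the basis $\{K_{n,q,j}\}_{j=0}^{n}$ and using $E_0=q^{-n}J$, the design equations $E_j\mathbf 1_C=0$ for $1\le j\le 2s-1$ (Lemma~\ref{lem:cha}), and the positive semidefiniteness of $E_{2s}$, one obtains the master identity: for every polynomial $p$ with $\deg p\le 2s$,
\[
\sum_{x,y\in C}p(\partial(x,y))=|C|^2\bigl(\overline{p}+p_{2s}\,\beta\bigr),\qquad \beta:=|C|^{-2}\!\!\sum_{x,y\in C}\!\!K_{n,q,2s}(\partial(x,y))\ \ge\ 0,
\]
where $p_{2s}$ is the $K_{n,q,2s}$-coefficient of $p$ (so $p_{2s}=0$ when $\deg p\le 2s-1$), and $\beta=0$ if and only if $C$ is a $2s$-design, equivalently a tight $2s$-design (by Eqs.~\eqref{eq:tight}, \eqref{eq:tightcode}).

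Next I would apply this to the annihilator $f(x):=\prod_{k=1}^{s}(x-\alpha_k)$, the monic degree-$s$ polynomial vanishing on $S(C)=\{\alpha_1,\dots,\alpha_s\}$. For $1\le m\le s-1$ the polynomial $x^m f$ has degree $\le 2s-1$ and vanishes on $S(C)\cup\{0\}$, so the left-hand side of the master identity is $0$; hence $\overline{x^m f}=0$ for $1\le m\le s-1$. Since $\langle g,h\rangle:=\overline{gh}$ is a nondegenerate pairing on polynomials of degree $\le s$ (as $s\le n$), whose orthogonal polynomials are the $K_{n,q,j}$, these $s-1$ linear conditions plus monicity pin $f$ down to a one-parameter affine family $f=\widehat K_{n,q,s}+c_0\Phi$, where $\widehat K_{n,q,s}$ is the monic Krawtchouk polynomial, $c_0:=\overline{f}$, and $\Phi$ is the unique degree-$(s-1)$ polynomial with $\Phi\perp\{x,\dots,x^{s-1}\}$ and $K_{n,q,0}$-coefficient $1$; explicitly $x\,\Phi(x)$ is proportional to $K_{n,q,s}(x)-\tfrac{K_{n,q,s}(0)}{K_{n,q,s-1}(0)}K_{n,q,s-1}(x)$. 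Taking $m=s$ in the master identity (now $\deg(x^s f)=2s$, with $K_{n,q,2s}$-coefficient $(2s)!/q^{2s}$) gives $\overline{x^s f}=-\tfrac{(2s)!}{q^{2s}}\beta\le 0$, with equality exactly for tight $2s$-designs; and always $f(n)=\prod_k(n-\alpha_k)\ge 0$, with $f(n)=0$ exactly when $\alpha_s=n$, i.e.\ when $x_1=0$.

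Along the family, $\sigma_1:=\sum_k\alpha_k=-[x^{s-1}]f$, the quantity $\overline{x^s f}$, and $f(n)$ are all affine in $c_0$. Using $K_{n,q,m}(0)=\binom nm(q-1)^m$, $K_{n,q,m}(n)=(-1)^m\binom nm$ and the explicit shape of $\Phi$, one checks that $\sigma_1$ is strictly decreasing in $c_0$ while $\overline{x^s f}$ and $f(n)$ are strictly increasing in $c_0$; hence the two constraints $\overline{x^s f}\le 0$ and $f(n)\ge 0$ confine $c_0$ to a nonempty interval, so $\sum_i x_i=sn-\sigma_1$ lies between its value at $\overline{x^s f}=0$ and its value at $f(n)=0$, with equality on the right iff $C$ is a tight $2s$-design and on the left iff $x_1=0$. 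Finally I would evaluate the two endpoints: at $\overline{x^s f}=0$ the code is a tight $2s$-design, so $f\propto\sum_{j=0}^{s}K_{n,q,j}$ by Theorem~\ref{thm:wilson}(1), and comparing $x^s$- and $x^{s-1}$-coefficients yields $sn-\sigma_1=\tfrac{s(n-s)}{q}+\tfrac{s(s-1)}{2}$; at $f(n)=0$ one has $f(x)\propto(x-n)\sum_{j=0}^{s-1}K_{n-1,q,j}(x)$ — this is where Theorem~\ref{thm:wilson}(2), or equivalently the identities linking $K_{n,q,j}$ and $K_{n-1,q,j}$ in the regular semilattice underlying $H(n,q)$, enters — and the same comparison gives $sn-\sigma_1=\tfrac{(s-1)(n-s)}{q}+\tfrac{s(s-1)}{2}$. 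I expect the main obstacle to be the sign bookkeeping that makes these two affine constraints genuinely sandwich $\sigma_1$ (rather than cut out an unbounded ray), together with the identification of the annihilator at the endpoint $f(n)=0$; the remaining steps are routine manipulations with Krawtchouk leading coefficients and evaluations at $0$ and $n$.
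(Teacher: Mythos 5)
Your route is genuinely different from the paper's. The paper follows Ionin--Shrikhande: double counting over the semilattice of partial words gives Lemma~\ref{lem:e11} (the identities $\sum_j(-1)^j(n-\ell)_{s-j}\lambda_{s-j}F_j^{(s)}=0$, obtained by contracting $\ell$ times), row reduction (Lemma~\ref{lem:e1}) then expresses every $F_j^{(s)}$ through $F_1^{(s)}=\sum_i x_i-\tfrac{s(s-1)}{2}$, the lower bound comes from $F_s^{(s)}=\prod_i x_i\ge 0$, and the upper bound from $|C|\le\sum_{k\le s}\binom nk(q-1)^k$ together with Melzak's formula (Lemma~\ref{lem:esum}). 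Your proposal instead runs a linear-programming/orthogonal-polynomial argument with the annihilator $f=\prod_k(x-\alpha_k)$. The master identity is correct ($n\ge 2s$ holds for $s\ge 2$, and for $\deg p\le 2s$ only $p_0$ and $p_{2s}\beta$ survive, with $\beta\ge 0$ and $\beta=0$ iff $C$ is a $2s$-design, hence tight); the one-parameter family $f=\widehat K_{n,q,s}+c_0\Phi$ is correctly pinned down by positive definiteness of the pairing on polynomials of degree $\le s$; and your endpoint polynomials are the right ones: $\sum_{j\le s}K_{n,q,j}(x)=K_{n-1,q,s}(x-1)$ is orthogonal to $x,\dots,x^s$, and $(x-n)\sum_{j\le s-1}K_{n-1,q,j}(x)$ is orthogonal to $x,\dots,x^{s-1}$ (both via index shifts of type $i\binom ni=n\binom{n-1}{i-1}$), and comparing the two top coefficients at these endpoints does yield $\tfrac{s(n-s)}{q}+\tfrac{s(s-1)}{2}$ and $\tfrac{(s-1)(n-s)}{q}+\tfrac{s(s-1)}{2}$, matching Theorem~\ref{thm:wilson}.

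The one genuine flaw is the monotonicity step, which you flagged but asserted in the wrong direction: it is not true for all $s$ that $\sigma_1$ is decreasing while $f(n)$ and $\overline{x^sf}$ are increasing in $c_0$. Already for $s=2$ one computes $\Phi(x)=n(q-1)+1-qx$, so $\mathrm{lc}(\Phi)=-q<0$, $\Phi(n)=1-n<0$, and $\overline{x^2\Phi}<0$: all three monotonicities are reversed. The repair is to prove the only fact the sandwich needs, namely that $\mathrm{lc}(\Phi)$, $\Phi(n)$ and $\overline{x^s\Phi}$ are nonzero and of a common sign. From $x\Phi\propto K_{n,q,s}-\frac{K_{n,q,s}(0)}{K_{n,q,s-1}(0)}K_{n,q,s-1}$, with $K_{n,q,j}(0)=\binom nj(q-1)^j$, $K_{n,q,j}(n)=(-1)^j\binom nj$ and Krawtchouk orthogonality, one gets $\mathrm{lc}(\Phi)=c(-1)^sq^s/s!$, $\Phi(n)=c(-1)^sq\binom ns/n$, and $\overline{x^s\Phi}$ equal to a positive multiple of $c(-1)^s$; so all three are positive multiples of $c(-1)^s$ (whose sign genuinely varies with $s$, e.g.\ it is positive for $s=1$ and negative for $s=2$). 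Whichever sign occurs, the constraints $f(n)\ge 0$ and $\overline{x^sf}\le 0$ cut out a closed interval for $c_0$ (nonempty, since the design's own $c_0$ satisfies both), $\sigma_1$ is strictly monotone on it, and the maximum of $\sum_i x_i$ is attained exactly at the $\overline{x^sf}=0$ end (tight $2s$-design) and the minimum exactly at the $f(n)=0$ end ($x_1=0$). With this sign bookkeeping corrected, your argument goes through and is a valid alternative to the paper's proof.
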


\begin{example}\label{extremalOA}
Consider the following two well-known 
designs.
\begin{enumerate}
    \item 
    Let $C$ be the dual of the 
    extended ternary Golay code in $H(12,3)$. 
    Then $S'(C)=\{x_1=0,x_2=3,x_3=6\}$, and the left-hand side in Theorem~\ref{thm:e2} equals
    \[
    \frac{(s-1)(n-s)}{q}+\frac{s(s-1)}{2}=\frac{(3-1)(12-3)}{3}+\frac{3(3-1)}{2}=9=x_1+x_2+x_3.
    \]
\item Let $C$ be the dual code of the ternary Golay code in $H(11,3)$, which is a tight $4$-design. 
    Then $S'(C)=\{x_1=2,x_2=5\}$, and the right-hand side in Theorem~\ref{thm:e2} equals
    \[
    \frac{s(n-s)}{q}+\frac{s(s-1)}{2}=\frac{2(11-2)}{3}+\frac{2(2-1)}{2}=7=x_1+x_2.
    \]
\end{enumerate}
This shows that both sides 
of the inequality are tight.
    \end{example}

We will need a series of lemmas 
for the proof of Theorem \ref{thm:e2}.
For a set $C$ of the point set of the Hamming scheme $H(n,q)$, recall $S(C)=\{\partial(x,y)\mid x,y\in C,x \neq y\}$ and $S'(C)=\{n-\partial(x,y)\mid x,y\in C,x \neq y\}$.  
Let $S'(C)=\{x_1,\ldots,x_s\}$ with $x_1<\cdots<x_s$.

Following Ionin and Shrikhande \cite{IS},
for pairwise distinct nonnegative integers $x_1,x_2,\ldots, x_s$, we define 
the following polynomial:
\[
P_s(z)=\prod_{i=1}^s (z-x_i),
\]
and the following recurrence sequence
$F_j^{(k)}$ ($k\geq 1, 0\leq j \leq k\}$) by  $F_0^{(k)}=1,F_k^{(k)}=x_1\cdots x_k$ and 
\begin{align*}
    F_j^{(k)}=F_j^{(k-1)}+(x_k-k+j)F_{j-1}^{(k-1)}\quad (k\geq 2, 1\leq j\leq k). 
\end{align*}

\begin{lemma}\label{lem:e1-4}{\rm (\cite[Lemma~1.4]{IS})} 
    $P_s(z)=\sum_{j=0}^s (-1)^{j}F_{j}^{(s)}\cdot (z)_{s-j}$, where 
    $(z)_i$ is the Pochhammer symbol
    $z(z+1)\cdots(z+i-1)$.
\end{lemma}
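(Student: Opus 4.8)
The plan is to prove the expansion by induction on $s$, peeling off one factor $(z-x_s)$ at a time. For the base case $s=1$ we have $P_1(z)=z-x_1$, while the right-hand side is $F_0^{(1)}(z)_1-F_1^{(1)}(z)_0=z-x_1$, since $(z)_0=1$, $(z)_1=z$, $F_0^{(1)}=1$ and $F_1^{(1)}=x_1$; so the claim holds.

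For the inductive step I would assume $P_{s-1}(z)=\sum_{j=0}^{s-1}(-1)^jF_j^{(s-1)}(z)_{s-1-j}$ and write $P_s(z)=P_{s-1}(z)\,(z-x_s)$. The key tool is the three-term relation expressing the product of a Pochhammer symbol with a linear polynomial in $z$: multiplying $(z)_m$ by $(z-x_s)$ yields a linear combination of $(z)_{m+1}$ and $(z)_m$, say $(z-x_s)(z)_m=(z)_{m+1}+c_m(z)_m$ for an explicit constant $c_m$ that is affine in $m$ and $x_s$. Substituting this with $m=s-1-j$ into the inductive hypothesis splits $P_s(z)$ into two sums, one indexed by the symbols $(z)_{s-j}$ and one by the symbols $(z)_{s-1-j}$.

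I would then shift the summation index in the second sum by one, so that both sums are indexed by the same Pochhammer symbols $(z)_{s-j}$, and collect the coefficient of each $(-1)^j(z)_{s-j}$. The two extreme coefficients reproduce the initial conditions: the coefficient of $(z)_s$ equals $F_0^{(s-1)}=1=F_0^{(s)}$, and the coefficient of $(z)_0$ equals $x_s F_{s-1}^{(s-1)}=x_1\cdots x_s=F_s^{(s)}$ (using the boundary convention $F_s^{(s-1)}=0$). For the interior indices $1\le j\le s-1$ the coefficient takes the form $F_j^{(s-1)}$ plus a scalar multiple of $F_{j-1}^{(s-1)}$, so the claim reduces to checking that this multiple is exactly the factor $(x_s-s+j)$ appearing in the defining recurrence; this recovers $F_j^{(s)}$ and closes the induction.

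The main obstacle is precisely this last coefficient bookkeeping: one must track the constant $c_m$ produced by the Pochhammer three-term relation and verify, after the index shift $m=s-1-j$, that the resulting multiplier of $F_{j-1}^{(s-1)}$ agrees with the offset $(x_s-s+j)$ of the recurrence. This is the one place where the particular normalization of the Pochhammer symbol $(z)_i$ enters, so the sign and the additive shift in $c_m$ must be computed with care to match the recurrence exactly; once that single affine identity is confirmed, everything else is routine algebra and the telescoping across $j$ is automatic.
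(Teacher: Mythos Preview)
The paper gives no proof of this lemma; it is quoted from Ionin--Shrikhande. Your inductive plan is the standard argument and is the right way to proceed: peel off the factor $(z-x_s)$, use the two-term relation between $(z)_m$ and $(z)_{m+1}$, shift the index, and match against the recurrence for $F_j^{(s)}$.

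There is, however, one concrete point where your ``main obstacle'' will bite, and it is not just bookkeeping: the paper's stated definition $(z)_i=z(z+1)\cdots(z+i-1)$ is a typo. With the \emph{rising} factorial you get $(z-x_s)(z)_m=(z)_{m+1}-(m+x_s)(z)_m$, and after the shift $m=s-1-j$ the multiplier of $F_{j-1}^{(s-1)}$ comes out as $(x_s+s-j)$, not the $(x_s-s+j)$ demanded by the recurrence; the identity is simply false in this normalisation (already for $s=2$: $P_2(z)=z^2-(x_1+x_2)z+x_1x_2$ forces $F_1^{(2)}=x_1+x_2+1$, while the recurrence gives $x_1+x_2-1$). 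The symbol intended throughout Section~4 is the \emph{falling} factorial $(z)_i=z(z-1)\cdots(z-i+1)$, as is clear from the identity $j(x-1)_{j-1}+(x-1)_j=(x)_j$ used in Lemma~\ref{lem:detMs}(1) and from the passage in the proof of Lemma~\ref{lem:e11} where $i!\binom{x_j}{i}$ produces $(x_j)_i$. With the falling factorial one has $(z-x_s)(z)_m=(z)_{m+1}+(m-x_s)(z)_m$, and your induction closes exactly as you describe, recovering $F_j^{(s)}=F_j^{(s-1)}+(x_s-s+j)F_{j-1}^{(s-1)}$.
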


Define the $(s+1)\times (s+1)$ matrix   $M_s=M_s(x_1,\ldots,x_s;a_0,a_1,\ldots,a_s)$ by 
\begin{align*}
    M_s=\begin{bmatrix}
        a_0 & 1 & 1 & \cdots & 1 \\
        a_1 & x_1 & x_2 & \cdots & x_s \\
        a_2 & (x_1)_2 & (x_2)_2 & \cdots & (x_s)_2 \\
        \vdots & \vdots & \vdots & \ddots & \vdots \\
        a_s & (x_1)_s & (x_2)_s & \cdots & (x_s)_s
    \end{bmatrix}. 
\end{align*}

\begin{lemma}\label{lem:detMs} 
The following holds.
\begin{enumerate}
    \item[(1)] 
    $
    \det M_s(x_1-1,\ldots,x_s-1;a_0,a_1,\ldots,a_s)=\det M_s(x_1,\ldots,x_s;b_0,b_1,\ldots,b_s),
    $
    where $b_0=a_0$ and $b_i=i a_{i-1}+a_{i}$ for $i\in\{1,\ldots,s\}$. 
    \item[(2)] {\rm (\cite[Proposition~2.2]{IS})} The determinant of $M_s$ is equal to
    \[
    (-1)^s\prod_{1\leq i<j\leq s} (x_j-x_i)\sum_{j=0}^s (-1)^ja_{s-j}F_j^{(s)}.
    \]
\end{enumerate}
\end{lemma}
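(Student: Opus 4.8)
Part~(2) is quoted from \cite[Proposition~2.2]{IS}, so there is nothing new to prove there, and I will concentrate on part~(1). The plan is to realize part~(1) as a statement about elementary row operations on $M_s$, so that it follows at once from the fact that such operations do not change the determinant.

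First I would record the Pascal-type identity for the Pochhammer symbol used here,
\[
(z)_i=(z-1)_i+i\,(z-1)_{i-1}\qquad(1\le i\le s),
\]
which is just Pascal's rule $\binom{z}{i}=\binom{z-1}{i}+\binom{z-1}{i-1}$ multiplied through by $i!$. Reading this off the columns of $M_s$ indexed by $x_1,\dots,x_s$, the $i$-th row of $M_s(x_1,\dots,x_s;b_0,\dots,b_s)$ (for $1\le i\le s$) is obtained from the $i$-th row of $M_s(x_1-1,\dots,x_s-1;a_0,\dots,a_s)$ by adding $i$ times its $(i-1)$-st row --- \emph{provided} the first-column entries obey the same recursion, i.e.\ $b_0=a_0$ and $b_i=i\,a_{i-1}+a_i$, which is exactly the hypothesis. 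The $0$-th rows already agree since $b_0=a_0$ and $(z)_0=1$. Equivalently,
\[
M_s(x_1,\dots,x_s;b_0,\dots,b_s)=U\cdot M_s(x_1-1,\dots,x_s-1;a_0,\dots,a_s),
\]
where $U$ is the $(s+1)\times(s+1)$ lower-bidiagonal matrix with $U_{i,i}=1$ for $0\le i\le s$ and $U_{i,i-1}=i$ for $1\le i\le s$.

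Since $U$ is lower triangular with unit diagonal, $\det U=1$, and hence the two determinants in part~(1) agree, which is the claim. The only point requiring care is the bookkeeping: these row operations must be read as a single left multiplication by $U$ (or, if carried out one at a time, performed from the bottom row upward) so that they do not interfere with one another; once that is set up the argument is immediate and there is no genuine obstacle. As a cross-check one may instead combine part~(1) with part~(2): the factor $\prod_{i<j}(x_j-x_i)$ is invariant under $x_i\mapsto x_i-1$, so part~(1) reduces to the identity $\sum_{j=0}^s(-1)^j a_{s-j}\widehat F_j^{(s)}=\sum_{j=0}^s(-1)^j b_{s-j}F_j^{(s)}$ between the sequences $F_j^{(s)}$ built from $(x_i)$ and $\widehat F_j^{(s)}$ built from $(x_i-1)$, and this in turn can be verified directly from the defining recurrence of the $F_j^{(k)}$ by induction on $s$.
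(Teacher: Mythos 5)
Your proof is correct and is essentially the paper's own argument: the same Pascal-type identity $(x_i)_j=(x_i-1)_j+j\,(x_i-1)_{j-1}$ (read with the falling-factorial meaning of $(z)_j$, which is what the double counting in Lemma~\ref{lem:e11} actually produces) and the same row operations carried out from the bottom row upward, which the paper phrases as ``for $i=s,s-1,\ldots,1$, add $i$ times the $(i-1)$-th row to the $i$-th row'' and you phrase as left multiplication by a unit lower-bidiagonal matrix $U$ with $\det U=1$. The cross-check via part~(2) is a nice extra but not needed.
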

\begin{proof}
Let $A=M_s(x_1-1,\ldots,x_s-1;a_0,a_1,\ldots,a_s)$. 
    Note that $j(x_i-1)_{j-1}+(x_i-1)_{j}=(x_i)_{j}$. 
    For $i=s,s-1,\ldots,1$, adding $i$ times the $(i-1)$-th row of $A$ to the $i$-th row of $A$ yields the other matrix.  
\end{proof}

Define a semilattice attached to 
the Hamming scheme $H(n,q)$ as follows. 
Let $n,q$ be integers such that $n\geq 1, q\geq 2$.
Let $V$ and $F$ be finite sets with sizes $n$ and $q$ respectively.
Define 
$X=\{(E,f): E\subseteq V, f\in\text{Map}(E,F)\}$.
For $(E,f),(E',f')\in X$, set $(E,f)\preceq (E',f')$ if and only if $E\subseteq E'$ and $f'|_E=f$.
Then $(X,\preceq)$ forms a semilattice with rank function, denoted $|x|$ for $x\in X$, taking the size of $E$. 
For every $0\leq i\leq n$, define the fiber $X_i:=\{x\in X: |x|=i\}$.
We regard $X_n$ as the vertex set of the Hamming scheme $H(n,q)$. 
As usual, the element $x\wedge y$ denotes the greatest lower bound of $x$ and $y$ for $x,y\in X$. 

Recall that the \textbf{index} $\lambda:=\frac{N}{q^t}$ of an $\OA(N,n,q,t)$ 
is the multiplicity of any 
row of $[q]^t$ among any chosen $t$ columns 
of the array.
Let $C$ be a $(2s-1)$-design in $H(n,q)$ with index $\lambda_{2s-1}$, where $\lambda_{2s-1}=\frac{|C|}{q^{2s-1}}$. Regarding $C$ as an $i$-design, 
for $i\leq 2s-1$, 
its index $\lambda_i$ is $\lambda_i=\frac{|C|}{q^i}$. 

\begin{lemma}\label{lem:e11}
Let $C$ be a $t$-design in $H(n,q)$ with degree $s$. 
\begin{enumerate}
    \item If $t\geq s$, then 
    \begin{align}
    \sum_{j=0}^s (-1)^j (n)_{s-j}\lambda_{s-j}F_{j}^{(s)}=P_s(n).\label{eq:e2}
    \end{align}
    \item If $t\geq 2s-1$, then
\end{enumerate}
    \begin{align}
\sum_{j=0}^s (-1)^j (n-\ell)_{s-j}\lambda_{s-j}F_{j}^{(s)}=0 \quad \text{ for } 1\leq \ell \leq s-1.\label{eq:e3}
    \end{align}
\end{lemma}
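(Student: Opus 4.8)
The plan is to extract both identities from double counts of ``flags'' in the Hamming semilattice $(X,\preceq)$ described above, feeding the flag counts into the expansion of $P_s$ provided by Lemma~\ref{lem:e1-4}. Recall that $|x\wedge y|$ is the number of coordinates on which $x,y\in X$ agree; so for a fixed $z\in C$ and any $y\in C\subseteq X_n$ we have $|z\wedge y|=n-\partial(z,y)$, which equals $n$ exactly when $y=z$ and lies in $S'(C)=\{x_1,\dots,x_s\}$ otherwise, and the defining property of a $t$-design is that $|\{y\in C:w\preceq y\}|=\lambda_i$ for every $w\in X_i$ with $i\le t$.

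For part~(1), fix $z\in C$ and, for $0\le i\le t$, count the pairs $(\vec c,y)$ where $\vec c$ is an ordered $i$-tuple of pairwise distinct coordinates and $y\in C$ agrees with $z$ along $\vec c$. Counting by $\vec c$ there are $(n)_i$ such tuples, each completed by $\lambda_i$ codewords, for a total of $(n)_i\lambda_i$; counting by $y$ the admissible $\vec c$ are the ordered $i$-tuples drawn from the $|z\wedge y|$ agreement coordinates, for a total of $\sum_{y\in C}(|z\wedge y|)_i$. Hence $\sum_{y\in C}(|z\wedge y|)_i=(n)_i\lambda_i$ for all $i\le t$. Since $P_s$ vanishes on $S'(C)$, this forces $\sum_{y\in C}P_s(|z\wedge y|)=P_s(n)$; expanding the left-hand side by Lemma~\ref{lem:e1-4} and substituting the flag counts (each index $s-j$ is at most $s\le t$) gives $P_s(n)=\sum_{j=0}^s(-1)^j(n)_{s-j}\lambda_{s-j}F_j^{(s)}$, which is \eqref{eq:e2}.

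For part~(2), fix $z\in C$ and a set $S$ of $n-\ell$ coordinates, and put $g(y):=|\{c\in S:z_c=y_c\}|$. Running the same count with $\vec c$ confined to $S$ gives $\sum_{y\in C}(g(y))_i=(n-\ell)_i\lambda_i$ for $i\le t$, which, exactly as in part~(1), identifies the left-hand side of \eqref{eq:e3} with $\sum_{y\in C}P_s(g(y))$; it remains to show this sum vanishes. Now $g(y)$ is no longer confined to $S'(C)\cup\{n\}$, so the individual summands need not vanish, and the strengthened hypothesis $t\ge2s-1$ becomes essential. Here I would use that, for \emph{any} polynomial $Q$ with $\deg Q\le s-1$, one has $\deg(P_sQ)\le 2s-1\le t$, so the moment identities of part~(1) (now available up to degree $2s-1$) give $\sum_{y\in C}(P_sQ)(|z\wedge y|)=\sum_{k}c_k(n)_k\lambda_k$, where $P_sQ=\sum_k c_k(v)_k$ is the falling-factorial expansion; on the other hand this sum equals $P_s(n)Q(n)$, since $P_s$ kills every term with $y\ne z$. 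Choosing $Q$ to vanish at $n$ yields a vanishing linear relation among the $(n)_k\lambda_k$ for each $Q$ in the $(s-1)$-dimensional space $\{(v-n)R(v):\deg R\le s-2\}$, and one then has to check that these $s-1$ relations coincide with \eqref{eq:e3} for $\ell=1,\dots,s-1$; this amounts to a comparison of coefficients after expanding $(n-\ell)_{s-j}$ via the falling-factorial Vandermonde formula.

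I expect the real obstacle to be exactly this last identification in part~(2): showing that the vanishing relations produced by the polynomials $P_s\cdot Q$ are, after rewriting, the relations \eqref{eq:e3}. (An alternative worth trying is to apply part~(1) to each derived design $C_{w_0}$ with $w_0\in X_\ell$ — a $(2s-1-\ell)$-design of degree at most $s$ in $H(n-\ell,q)$, with $2s-1-\ell\ge s$ — which yields $\sum_{j}(-1)^j(n-\ell)_{s-j}\lambda_{s-j+\ell}F_j^{(s)}(x_1-\ell,\dots,x_s-\ell)=P_s(n)$, and then to transport the shifted arguments $x_i-\ell$ back to $x_1,\dots,x_s$ by iterating the coefficient transformation of Lemma~\ref{lem:detMs}(1) and comparing with \eqref{eq:e2}; this again comes down to a finite-difference bookkeeping.)
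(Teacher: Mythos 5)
Your part (1) is fine: the ordered-tuple count giving $\sum_{y\in C}(|z\wedge y|)_i=(n)_i\lambda_i$ for $i\le t$ is the same double count the paper performs with unordered $i$-sets $I\preceq x\wedge y$, and summing $P_s(|z\wedge y|)$ over $C$ and expanding via Lemma~\ref{lem:e1-4} is just a streamlined version of the paper's route through $\det M_s=0$ and Lemma~\ref{lem:detMs}(2). The genuine gap is in part (2), which is where the hypothesis $t\ge 2s-1$ and all the content of the lemma sit. You correctly reduce \eqref{eq:e3} to showing $\sum_{y\in C}P_s(g(y))=0$, and you correctly produce the valid relations $\sum_{y\in C}(P_sQ)(|z\wedge y|)=0$ for every $Q$ with $\deg Q\le s-1$ and $Q(n)=0$; but you then stop at ``one has to check that these $s-1$ relations coincide with \eqref{eq:e3}'', and you yourself flag this unperformed identification as the real obstacle. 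Your parenthetical alternative --- apply part (1) to the $\ell$-fold contractions and transport the shifted arguments $x_i-\ell$ back via Lemma~\ref{lem:detMs}(1) --- is in substance the paper's actual proof (the paper does $\ell=1$ with $C_1$, the shift $b_i=ia_{i-1}+a_i$, and a telescoping computation that uses \eqref{eq:e2} and ends with the nonzero factor $\tfrac1q-1$, treating larger $\ell$ ``similarly''), but you likewise leave it as unexecuted ``finite-difference bookkeeping''. So, as submitted, neither route actually proves \eqref{eq:e3}.

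For what it is worth, your main route does close, and with a single clean choice rather than a coefficient comparison over an $(s-1)$-dimensional space: take $Q_\ell(v)=\binom{n-v}{\ell}$ for $1\le\ell\le s-1$, so $\deg(P_sQ_\ell)=s+\ell\le 2s-1\le t$ and every summand of $\sum_{y\in C}P_s(|z\wedge y|)\binom{n-|z\wedge y|}{\ell}$ vanishes (for $y\ne z$ the factor $P_s$ is zero, for $y=z$ the binomial factor is zero). Evaluating the same sum through the moment identities with $\lambda_k=|C|/q^k$, and using $\binom{n}{k}\binom{n-k}{\ell}=\binom{n}{\ell}\binom{n-\ell}{k}$ to pass from the parameters $(n,q)$ to $(n-\ell,q)$, one finds that it equals $\binom{n}{\ell}\bigl(1-\tfrac1q\bigr)^{\ell}$ times the left-hand side of \eqref{eq:e3}; since this factor is nonzero, \eqref{eq:e3} follows (this is the general-$\ell$ analogue of the $\tfrac1q-1$ factor at the end of the paper's computation, and it does not even need \eqref{eq:e2}). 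Until a verification of this kind is supplied, part (2) of your proposal is a plan, not a proof.
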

\begin{proof}
(1) Let $y$ be a fixed point 
in $C$. 
    For $j\in\{1,\ldots,s\}$, let $m_j$ denote the number of points
    $x$ such that $n-\partial(x,y)=x_j$. 
    Additionally, set $m_0=-1$ and $x_0=n$.
    Then, for $i\in\{0,1,\ldots,s\}$, double counting 
    the set 
    \[
    \{(x,I)\in C\times X_{i} \mid I\preceq x\wedge y\}
    \]
    yields 
    \begin{align}\label{eq:e1}
    \sum_{j=0}^s m_j \binom{x_j}{i}=\binom{n}{i}\lambda_i.
    \end{align}
    Multiplying the $i$-th equation by $i!$, one can consider 
    Eq.~\eqref{eq:e1} with $i\in\{0,1,\ldots,s\}$ 
    as a homogeneous system of linear equations in the 
    unknowns $m_0,m_1,\ldots,m_s$ with the coefficient matrix $M_s=M_s(x_1,\ldots,x_s;a_0,a_1,\ldots,a_s)$, where
    $a_i=(n)_i (\lambda_i-1)$ for $i\in\{0,1,\ldots,s\}$. 
    As we set $m_0\ne 0$, it follows that $\det M_s=0$. 
    By 
    Lemma \ref{lem:detMs}, 
    \[
    \sum_{j=0}^s (-1)^ja_{s-j}F_j^{(s)}= 
    \sum_{j=0}^s (-1)^j(n)_{s-j}(\lambda_{s-j}-1)F_j^{(s)}=0, 
    \]
    which, together with  Lemma~\ref{lem:e1-4}, gives Eq. ~\eqref{eq:e2}. This shows (1). 

    (2) Next, consider the contraction 
    (say, by the first coordinate) 
    $C_1$ of $C$, which is a $(2s-2)$-design with index $\lambda'_{2s-2}$ in $H(n-1,q)$. Note that $S'(C_1)$ is contained in $\{x_1-1,\ldots,x_s-1\}$ and $\lambda_i'=\lambda_{i+1}$. 
    Lemma \ref{lem:detMs} and the above argument\footnote{When $x_i-1\not\in S'(X)$, we set $m_i=0$. Since we set $m_0=-1$, the homogeneous system of linear equations whose unknowns are $m_0,m_1,\ldots,m_s$ has a nontrivial solution.} applied to $C_1$ 
    yield 
    \[
    \det M_s(x_1,\ldots,x_s;b_0,b_1,\ldots,b_s)=\det M_s(x_1-1,\ldots,x_s-1;a_0,a_1,\ldots,a_s)=0, 
    \]
    where $a_i=(n-1)_i(\lambda'_{i}-1)$ for $i\in\{0,1,\ldots,s\}$, $b_0=a_0$ and $b_i=ia_{i-1}+a_i$ for $i\in\{1,\ldots,s\}$.  
    Then: 
    \begin{small}%
    \begin{align*}
    0&=\sum_{j=0}^s (-1)^j b_{s-j}F_{j}^{(s)}\\
    &=\sum_{j=0}^s (-1)^j F_{j}^{(s)} a_{s-j}+\sum_{j=0}^{s} (-1)^j F_{j}^{(s)} (s-j)a_{s-j-1}\displaybreak[0]\\
    &=\sum_{j=0}^s (-1)^j F_{j}^{(s)} (n-1)_{s-j}(\lambda'_{s-j}-1)+\sum_{j=0}^{s} (-1)^j F_{j}^{(s)} ((n)_{s-j}-(n-1)_{s-j})(\lambda'_{s-j-1}-1)\displaybreak[0]\\
    &=\sum_{j=0}^s (-1)^j F_{j}^{(s)} (n-1)_{s-j}(\lambda_{s-j+1}-\lambda_{s-j})+P_s(n)-P_s(n)\displaybreak[0]\\
    &\text{ (by Lemma~\ref{lem:e11}(1) and Lemma~\ref{lem:e1-4})}\displaybreak[0]\\
    &=\sum_{j=0}^s (-1)^j F_{j}^{(s)} (n-1)_{s-j}\left(\frac{1}{q}\lambda_{s-j}-\lambda_{s-j}\right)\displaybreak[0]\\
    &=\left(\frac{1}{q}-1\right)\sum_{j=0}^s (-1)^j F_{j}^{(s)} (n-1)_{s-j}\lambda_{s-j}.
    \end{align*}
    \end{small}
    Since $\frac{1}{q}-1\neq 0$, 
    it follows that $\sum_{j=0}^s (-1)^j F_{j}^{(s)} (n-1)_{s-j}\lambda_{s-j}=0$. 
    This shows Eq. \eqref{eq:e3}
    for $\ell=1$. The cases for $\ell\in\{2,\ldots,s-1\}$ are similarly proven. 
    \end{proof}

Lemma~\ref{lem:e11}(2) with $\ell\in\{1,\ldots,s-1\}$ yields a homogeneous system of linear equations whose unknowns are $F_j^{(s)}$ with $0\leq j\leq s$. By rewriting $s-j$ with $j$, we have the following: 
\begin{align*}
A {\bm f}={\bm 0}, 
\end{align*}
where 
$
    A=(
    (-1)^{s-j}(n-\ell)_{j}\lambda_{j}
    )_{\substack{\ell=1,\ldots,s-1\\ j=0,\ldots,s}}
    ,{\bm f}=(F_{s-\ell}^{(s)})_{\ell=0,\ldots,s}
$. 

\begin{lemma}\label{lem:e1}
    The reduced row echelon form of 
    the matrix $A$ is $\begin{bmatrix}I_{s-1} & {\bm c}_{s} & {\bm c}_{s+1} \end{bmatrix}$, where 
    the $i$-th entry of the column ${\bm c}_s$ is $-\frac{\binom{s-1}{i-1}(n-i)_{s-i}}{q^{s-i}}$ and the $i$-th entry of ${\bm c}_{s+1}$ is $\frac{((s-1)\binom{s-1}{i-1}-\binom{s-1}{i-2})(n-i)_{s-i+1}}{q^{s-i+1}}$ for $i\in\{1,\ldots,s-1\}$.
\end{lemma}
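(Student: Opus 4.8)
I would first pin down the pivot structure of $A$ by a Vandermonde argument, and then identify the two non-pivot columns $\bm c_s,\bm c_{s+1}$ by a polynomial-interpolation computation. Write $\bm a_0,\bm a_1,\dots,\bm a_s$ for the columns of $A$, so $(\bm a_j)_\ell=(-1)^{s-j}\lambda_j\,(n-\ell)_j$ with $\lambda_j=|C|/q^j>0$. For each fixed $\ell$, $(n-\ell)_j$ is a monic polynomial of degree $j$ in the quantity $n-\ell$; hence the $(s-1)\times(s-1)$ submatrix $B=[\,\bm a_0\ \cdots\ \bm a_{s-2}\,]$ equals, up to unipotent column operations and scaling of the $j$-th column by $(-1)^{s-j}\lambda_j\neq0$, the Vandermonde matrix in the $s-1$ distinct values $n-1,\dots,n-s+1$, so $\det B$ is a nonzero multiple of $\prod_{1\le\ell<\ell'\le s-1}(\ell-\ell')\neq0$. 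Therefore $\operatorname{rank}A=s-1$, the pivot columns are exactly those indexed $0,1,\dots,s-2$, the first $s-1$ columns of the reduced row echelon form of $A$ are $I_{s-1}$, and $\bm c_s=B^{-1}\bm a_{s-1}$, $\bm c_{s+1}=B^{-1}\bm a_s$.

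\textbf{Polynomial reformulation.} To compute $\bm c_{m+1}$ for $m\in\{s-1,s\}$ I would look for scalars $c_0,\dots,c_{s-2}$ with $\bm a_m=\sum_{j=0}^{s-2}c_j\bm a_j$. Reading off the $\ell$-th coordinate, this says precisely that the polynomial $\Phi_m(x):=(-1)^{s-m}\lambda_m(x)_m-\sum_{j=0}^{s-2}c_j(-1)^{s-j}\lambda_j(x)_j$ vanishes at $x=n-1,n-2,\dots,n-s+1$. Since the subtracted sum has degree at most $s-2$, $\Phi_m$ has degree $m$ with leading coefficient $(-1)^{s-m}\lambda_m$, and $\Phi_m-(-1)^{s-m}\lambda_m(x)_m$ has degree $\le s-2$. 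For $m=s-1$ this forces $\Phi_{s-1}(x)=-\lambda_{s-1}\prod_{\ell=1}^{s-1}\bigl(x-(n-\ell)\bigr)$. For $m=s$ it forces $\Phi_s(x)=\lambda_s(x-r)\prod_{\ell=1}^{s-1}\bigl(x-(n-\ell)\bigr)$ with the extra root $r$ determined by matching the coefficient of $x^{s-1}$ with that of $\lambda_s(x)_s$ --- equivalently, by equating the root sum $r+\sum_{\ell=1}^{s-1}(n-\ell)$ with $0+1+\cdots+(s-1)$ --- which yields $r=(s-1)(s-n)$.

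\textbf{Expansion in the factorial basis.} It then remains to read off the $c_j$'s. Using $\prod_{\ell=1}^{s-1}(x-(n-\ell))=(x+s-1-n)_{s-1}$ (and, for $m=s$, writing $x-r=(x-n)+s(n-s+1)$ and $(x-n)\prod_{\ell=1}^{s-1}(x-(n-\ell))=(x+s-1-n)_s$), I would expand the two right-hand sides in the factorial basis $(x)_0,\dots,(x)_{s-1}$ via the Vandermonde convolution $(x+y)_p=\sum_{k=0}^p\binom pk(x)_k(y)_{p-k}$ and match the coefficient of $(x)_j$. Dividing by $(-1)^{s-j}\lambda_j$, substituting $\lambda_{s-1}/\lambda_j=q^{-(s-1-j)}$ and $\lambda_s/\lambda_j=q^{-(s-j)}$, and applying the sign rule $(s-1-n)_p=(-1)^p(n-s+1)(n-s+2)\cdots(n-s+p)$, one gets $c_j$ as a binomial sum; the closed forms of the lemma come out after the elementary simplifications $\binom sj=\tfrac{s}{s-j}\binom{s-1}{j}$, $\binom sj(n-j)-s(n-s+1)\binom{s-1}{j}=-\tfrac{s(s-1-j)(n-s)}{s-j}\binom{s-1}{j}$, and $\tfrac{s(s-1-j)}{s-j}\binom{s-1}{j}=(s-1)\binom{s-1}{j}-\binom{s-1}{j-1}$, upon setting $j=i-1$.

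\textbf{Main obstacle.} The delicate part is the bookkeeping in this last step: one must carry the alternating signs through the negation rule for $(s-1-n)_p$ without error and collapse the Vandermonde sums exactly to the compact coefficients $\binom{s-1}{i-1}$ and $(s-1)\binom{s-1}{i-1}-\binom{s-1}{i-2}$. A useful internal check --- and something the argument genuinely needs --- is that in the case $m=s$ the coefficient of $(x)_{s-1}$ in the expansion vanishes, which it does because $(s-1-n)+(n-s+1)=0$; this confirms that $\Phi_s-\lambda_s(x)_s$ indeed has degree $\le s-2$ and hence that $j$ runs only up to $s-2$, consistent with the shape $[\,I_{s-1}\ \bm c_s\ \bm c_{s+1}\,]$ claimed for the reduced row echelon form.
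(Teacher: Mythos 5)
Your argument is correct and supplies exactly the computation that the paper compresses into ``by direct calculations'': the Vandermonde argument identifies columns $0,\dots,s-2$ as the pivots, the interpolation step correctly forces $\Phi_{s-1}=-\lambda_{s-1}\prod_{\ell=1}^{s-1}(x-(n-\ell))$ and $\Phi_s=\lambda_s(x-r)\prod_{\ell=1}^{s-1}(x-(n-\ell))$ with $r=(s-1)(s-n)$, and I checked that the Chu--Vandermonde expansion and your three binomial simplifications do land on the stated entries of ${\bm c}_s$ and ${\bm c}_{s+1}$ (including the needed vanishing of the $(x)_{s-1}$-coefficient). One point you should make explicit: throughout you use the \emph{falling} factorial $(z)_i=z(z-1)\cdots(z-i+1)$, whereas Lemma~\ref{lem:e1-4} literally defines $(z)_i$ as the rising factorial $z(z+1)\cdots(z+i-1)$; your choice is the correct one, since the statement of Lemma~\ref{lem:e1} holds only for the falling factorial (for $s=3$ the second entry of ${\bm c}_s$ is $-2(n-2)/q$ under the falling convention but the rising-factorial matrix $A$ would give $-2(n-1)/q$), and the same convention is forced by Lemma~\ref{lem:detMs}(1) and by the passage from $i!\binom{x_j}{i}$ to $(x_j)_i$ in the proof of Lemma~\ref{lem:e11} --- so the paper's displayed definition of the Pochhammer symbol is a typo that your write-up should flag rather than silently override.
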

\begin{proof} 
By direct calculations.
\end{proof}
By Lemma~\ref{lem:e1}, $F_j^{(s)}$ for $j\in\{2,\ldots,s\}$ and $F_1^{(s)}$ are related as follows:  
\begin{small}
\begin{align}
F_j^{(s)}&= 
\frac{(n-s+j-1)_{j-1}}{q^{j-1}}\left(\binom{s-1}{j-1}F_1^{(s)}-\frac{((s-1)\binom{s-1}{j-1}-\binom{s-1}{j})(n-s)}{q}\right).\label{eq:e5}   
\end{align}
\end{small}
Note that Eq. \eqref{eq:e5} is valid for $j=1$. 

\begin{proof}[Proof of Theorem~\ref{thm:e2} for the lower bound]
By Eq. \eqref{eq:e5}, 
$F_s^{(s)}$ and $F_1^{(s)}$ are related as follows:  
\begin{align*}
F_s^{(s)}= 
\frac{(n-1)_{s-1}}{q^{s-1}}\left(F_1^{(s)}-\frac{(s-1)(n-s)}{q}\right).   
\end{align*}
Since $F_s^{(s)}=\prod_{i=1}^s x_i\geq 0$, the above equality shows that 
\[
\sum_{i=1}^sx_i-\frac{s(s-1)}{2}=F_1^{(s)}\geq \frac{(s-1)(n-s)}{q}.
\]
This proves the lower bound on  $\sum_{i=1}^s x_i$. Equality occurs if and only if $F_s^{(s)}=\prod_{i=1}^s x_i=0$.  Since $0\leq x_1<\cdots<x_s$, the latter is equivalent to $x_1=0$.  
\end{proof}

Next, we turn to the upper bound on $\sum_{i=1}^s x_i$. 
By Lemma~\ref{lem:e1-4} and Eq. \eqref{eq:e2} with the fact that $\lambda_i=\frac{|C|}{q^i}$, we have: 
\begin{align}
    \sum_{j=0}^s (-1)^{j}F_{j}^{(s)}(n)_{s-j}=|C|\sum_{j=0}^s (-1)^j F_{j}^{(s)}\frac{(n)_{s-j}}{q^{s-j}}. \label{eq:e6}
\end{align}
Since $C$ is an $s$-distance set, $|C|\leq \sum_{k=0}^s \binom{n}{k}(q-1)^k=:M$ holds with equality if and only if $C$ is a tight $2s$-design.  
Since both sides of Eq. \eqref{eq:e6} are positive, it yields 
\begin{align}
    \sum_{j=0}^s (-1)^{j}F_{j}^{(s)}(n)_{s-j}\leq M\sum_{j=0}^s (-1)^j F_{j}^{(s)} \frac{(n)_{s-j}}{q^{s-j}}. \label{eq:e61}
\end{align}

Substituting Eq. \eqref{eq:e5} into Eq. \eqref{eq:e61} and simplifying it with $(n)_{s-j}(n-s+j-1)_{j-1}=\frac{(n)_{s}}{(n-s+j)}$ and $(n)_s>0$ gives
     \begin{align}
   &\sum_{j=1}^s \left(\frac{M}{q^{s-j}}-1\right) \frac{(-1)^{j+1}}{(n-s+j)q^{j-1}}\binom{s-1}{j-1}F_1^{(s)} \nonumber\\
   &\leq \sum_{j=0}^s \left(\frac{M}{q^{s-j}}-1\right) \frac{(-1)^{j+1}(n-s)}{(n-s+j)q^{j}}\left((s-1)\binom{s-1}{j-1}-\binom{s-1}{j}\right).
   \label{eq:e9}
    \end{align}
Next, we simplify the inequality in Eq. \eqref{eq:e9}.
\begin{lemma}\label{lem:esum}
Let $M=\sum_{k=0}^s \binom{n}{k}(q-1)^k$. Then:
\begin{enumerate}
    \item $\sum_{j=1}^s \left(\frac{M}{q^{s-j}}-1\right) \frac{(-1)^{j+1}}{(n-s+j)q^{j-1}}\binom{s-1}{j-1}=\frac{(q-1)^s}{sq^{s-1}}$, 
    \item $\sum_{j=0}^s \left(\frac{M}{q^{s-j}}-1\right) \frac{(-1)^{j+1}(n-s)}{(n-s+j)q^{j}}\left((s-1)\binom{s-1}{j-1}-\binom{s-1}{j}\right)=\frac{(n-s)(q-1)^s}{q^{s}}$. 
\end{enumerate}
\end{lemma}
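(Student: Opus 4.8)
The plan is to split each of the two displayed sums into an ``$M$-part'' and a constant part, to evaluate both parts by means of a single classical identity, and then to notice that in the variable $q-1$ almost everything telescopes away. The tool I would record first is the evaluation
\[
\sum_{k=0}^{m}\binom{m}{k}\frac{(-1)^k}{x+k}=\frac{m!}{x(x+1)\cdots(x+m)},
\]
valid whenever no denominator vanishes; equivalently, writing $\frac{1}{x+k}=\int_0^1 t^{x+k-1}\,dt$, the left side is $\int_0^1 t^{x-1}(1-t)^m\,dt$, and more generally $\sum_k\binom{m}{k}\frac{z^k}{x+k}=\int_0^1 t^{x-1}(1+zt)^m\,dt$; this is what absorbs the powers of $q$. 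Throughout it is convenient to write $q-t=(q-1)+(1-t)$ and $q=(q-1)+1$, so that every quantity that appears becomes a polynomial in $q-1$.

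For part (1), set $j=i+1$. The constant part of the left-hand side is
\[
S_2:=\sum_{i=0}^{s-1}\binom{s-1}{i}\frac{(-1)^i}{(n-s+1+i)\,q^i}=\frac{1}{q^{s-1}}\int_0^1 t^{n-s}(q-t)^{s-1}\,dt,
\]
while the $M$-part equals $\frac{M}{q^{s-1}}\sum_{i=0}^{s-1}\binom{s-1}{i}\frac{(-1)^i}{n-s+1+i}=\frac{M}{q^{s-1}}\cdot\frac{1}{s\binom{n}{s}}$ by the base identity. Expanding $(q-t)^{s-1}=\sum_{\ell=0}^{s-1}\binom{s-1}{\ell}(q-1)^\ell(1-t)^{s-1-\ell}$ and integrating termwise via $\int_0^1 t^{n-s}(1-t)^{s-1-\ell}\,dt=\frac{(n-s)!\,(s-1-\ell)!}{(n-\ell)!}$ gives $\int_0^1 t^{n-s}(q-t)^{s-1}\,dt=\sum_{\ell=0}^{s-1}(q-1)^\ell c_\ell$, where $c_\ell:=\frac{(s-1)!\,(n-s)!}{\ell!\,(n-\ell)!}$. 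Since $s\binom{n}{s}c_\ell=\binom{n}{\ell}$, the same bookkeeping gives $\frac{M}{s\binom{n}{s}}=\sum_{\ell=0}^{s}(q-1)^\ell c_\ell$. Subtracting, all terms with $\ell\le s-1$ cancel and only $\ell=s$ survives, so the left-hand side is $\frac{(q-1)^s c_s}{q^{s-1}}=\frac{(q-1)^s}{s\,q^{s-1}}$, proving (1).

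For part (2) I may assume $n>s$ (if $n=s$ both sides vanish). Dividing through by $n-s$ and using $(s-1)\binom{s-1}{j-1}-\binom{s-1}{j}=s\binom{s-1}{j-1}-\binom{s}{j}$, the contribution of the $s\binom{s-1}{j-1}$ term is exactly $s$ times the part-(1) sum but carrying one extra factor $q^{-1}$, hence equals $s\cdot\frac1q\cdot\frac{(q-1)^s}{s\,q^{s-1}}=\frac{(q-1)^s}{q^{s}}$, which is already the entire right-hand side; it therefore remains to prove $\sum_{j=0}^{s}\bigl(\frac{M}{q^{s-j}}-1\bigr)\frac{(-1)^{j+1}\binom{s}{j}}{(n-s+j)\,q^{j}}=0$. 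Splitting this sum as before (note $\frac{M}{q^{s-j}\,q^{j}}=\frac{M}{q^{s}}$), it amounts to $\frac{M}{q^{s}}\sum_{j}\binom{s}{j}\frac{(-1)^j}{n-s+j}=\sum_{j}\binom{s}{j}\frac{(-1)^j}{(n-s+j)\,q^{j}}$; the left side is $\frac{M}{q^{s}}\cdot\frac{s!\,(n-s-1)!}{n!}$ by the base identity, and the right side is $\frac{1}{q^{s}}\int_0^1 t^{n-s-1}(q-t)^{s}\,dt=\frac{1}{q^{s}}\sum_{\ell=0}^{s}\binom{s}{\ell}(q-1)^\ell\frac{(n-s-1)!\,(s-\ell)!}{(n-\ell)!}=\frac{s!\,(n-s-1)!}{q^{s}\,n!}\sum_{\ell=0}^{s}\binom{n}{\ell}(q-1)^\ell=\frac{s!\,(n-s-1)!}{q^{s}\,n!}\,M$, so the two sides coincide.

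I do not foresee a genuine obstacle: once the base identity and the substitution $q=(q-1)+1$ are in place, the proof is bookkeeping with binomial coefficients. The step that wants care is the reduction of (2) to (1): one must track the extra power of $q$ correctly and observe that the $s\binom{s-1}{j-1}$ piece alone already reproduces the whole right-hand side, which forces the $\binom{s}{j}$ piece to vanish identically; and that vanishing is, yet again, just the base identity together with $M=\sum_{\ell=0}^{s}\binom{n}{\ell}(q-1)^\ell$. Should one prefer to avoid the Beta integral, each occurrence of $\int_0^1 t^{x-1}(1-t)^m\,dt$ can be replaced by the displayed finite identity after expanding $q^{s-1-k}=\sum_\ell\binom{s-1-k}{\ell}(q-1)^\ell$ and using $\binom{m}{k}\binom{m-k}{\ell}=\binom{m}{\ell}\binom{m-\ell}{k}$.
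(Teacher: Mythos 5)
Your proof is correct; I checked both parts, including the change of index $j=i+1$, the evaluation of the $M$-part via $\sum_{i=0}^{s-1}\binom{s-1}{i}\tfrac{(-1)^i}{n-s+1+i}=\tfrac{1}{s\binom{n}{s}}$, the identities $s\binom{n}{s}c_\ell=\binom{n}{\ell}$ and $(s-1)\binom{s-1}{j-1}-\binom{s-1}{j}=s\binom{s-1}{j-1}-\binom{s}{j}$, and the vanishing of the $\binom{s}{j}$-piece, which indeed reduces to $M=\sum_{\ell=0}^{s}\binom{n}{\ell}(q-1)^\ell$ together with the same base identity. The route differs from the paper's in its bookkeeping: the paper multiplies through by $q^{s-1}$, rewrites the left-hand side as a polynomial identity in $q-1$, kills every coefficient of $(q-1)^k$ with $k<s$ by the finite-difference (Stirling-type) identity $\sum_j(-1)^j\binom{m}{j}p(j)=0$ for $\deg p<m$, and evaluates the top coefficient by Melzak's formula; you instead split each sum into an $M$-part and a constant part, evaluate both in closed form using only the Beta-integral special case of Melzak (constant $f$), and let the cancellation happen term by term through the coefficients $c_\ell$. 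Your argument thus avoids the general Melzak formula and the difference-operator lemma, at the cost of the integral representation (which, as you note, can be replaced by finite binomial manipulations); it also supplies an explicit reduction of part (2) to part (1), which the paper leaves as ``similarly proven,'' and that reduction is a genuinely useful addition. The only loose end is the degenerate case $n=s$ in part (2), where the $j=0$ term is formally $0/0$ rather than vanishing; this never occurs in the paper's applications (a $(2s-1)$-design of degree $s$ forces $n\geq 2s-1>s$ for $s\geq 2$), so it is harmless, but it would be cleaner to state $n>s$ as a hypothesis than to claim both sides vanish.
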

\begin{proof}
   (1) is equivalent to 
\begin{align}
    \sum_{k=0}^s \left(\sum_{j=0}^{s-1}  \left(\frac{\binom{n}{k}-\binom{j}{k}}{n-j}\right) {(-1)^{(s-j)+1}}\binom{s-1}{j}\right)(q-1)^{k}=\frac{(q-1)^s}{s}.  \label{eq:econj1}
\end{align}
Note that $\frac{\binom{n}{k}-\binom{j}{k}}{n-j}$ is a polynomial in $j$ provided that $k<n$. 
Then using the binomial identity 
for the Stirling numbers of the second kind
\[
\sum_{j=0}^m (-1)^j \binom{m}{j}j^k=0
\]
for $0\leq k<m$, 
we get that the inner sum 
on the left-hand side of
Eq. \eqref{eq:econj1} is 
equal to $0$ if $k<s$.

Recall Melzak's formula \cite{M51} which states that, for a polynomial $f(x)$ of degree $n$, we have  
\[
\sum_{k=0}^n (-1)^k \binom{n}{k}\frac{f(y-k)}{x+k}=\frac{f(x+y)}{x\binom{x+n}{n}}
\]
where $x\neq -k$ ($k=0,1,\ldots,n$). 
Thus, 
the inner sum for $k=s$ in Eq. \eqref{eq:econj1} is 
\begin{align*}
\sum_{j=0}^{s-1}  \left(\frac{\binom{n}{s}-\binom{j}{s}}{n-j}\right) {(-1)^{(s-j)+1}}\binom{s-1}{j}&=(-1)^{s+1}\binom{n}{s}\sum_{j=0}^{s-1}  \frac{1}{n-j} {(-1)^{j}}\binom{s-1}{j}\\
&=(-1)^{s+1}\binom{n}{s}\cdot (-1)^{s+1}  \frac{1}{n\binom{n-1}{s-1}}\\
&=\frac{1}{s}, 
\end{align*}
which proves part (1). Part (2) is similarly proven. 
\end{proof}
\begin{proof}[Proof of Theorem~\ref{thm:e2} for the upper bound] 
By Lemma~\ref{lem:esum}, 
the inequality in Eq. \eqref{eq:e9} simplifies to 
\[
\sum_{i=1}^s x_i\leq \frac{(n-s)s}{q}+\frac{s(s-1)}{2}. 
\]
This proves the upper bound on 
$\sum_{i=1}^s x_i$, with  
equality if and only 
if $|C|=M$, i.e., $C$ is 
a tight $2s$-design.  
\end{proof}

We give without proof the following analogue of \cite[Proposition~4.1]{IS}.  
\begin{theorem}\label{thm:e3}
Let $C$ be a $(2s-1)$-design in $H(n,q)$ with degree $s$. 
Define 
$S'(C)=\{n-\partial(x,y)\mid x,y\in C,x \neq y\}$ and set $S'(C)=\{x_1,\ldots,x_s\}$ with $x_1<\cdots<x_s$. 
Then for all $m\in\{1,\ldots,s\}$,
\begin{align*}
\sum_{i=1}^m x_i\geq m(m-1),     
\end{align*}
with equality if and only if $x_i=2i-2$ for $i\in\{1,\ldots,m\}$.  
\end{theorem}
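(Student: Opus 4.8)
I would prove Theorem~\ref{thm:e3} following \cite[Proposition~4.1]{IS}, adapting that argument to the Hamming scheme via the regular-semilattice counting already used for Theorem~\ref{thm:e2}, and working throughout with the $m$ smallest near-distances $x_1<\cdots<x_m$. Fix $m\in\{1,\dots,s\}$ and a point $y\in C$, and for $j\in\{1,\dots,s\}$ let $N_j$ be the number of $x\in C$, $x\neq y$, with $n-\partial(x,y)=x_j$; put $N_0=-1$, $x_0=n$.

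Double-counting $\{(x,I)\in C\times X_i:I\preceq x\wedge y\}$ and using that $C$ is a $(2s-1)$-design gives
\[
\sum_{j=1}^{s}N_j\binom{x_j}{i}=\binom{n}{i}(\lambda_i-1)\qquad(i=0,1,\dots,2s-1).
\]
Reading the equations $i=0,\dots,m$ (scaled by $i!$) as a homogeneous system for $(-1,N_1,\dots,N_m)$ with coefficient matrix $M_m(x_1,\dots,x_m;a_0,\dots,a_m)$, where $a_i=(n)_i(\lambda_i-1)-\sum_{j>m}N_j(x_j)_i$, forces $\det M_m=0$; Lemma~\ref{lem:detMs}(2) and Lemma~\ref{lem:e1-4} then turn this into an identity in the $F_j^{(m)}$-basis whose discrepancy is $\sum_{\ell>m}N_\ell\,P_m(x_\ell)$, with $P_m(z)=\prod_{i=1}^m(z-x_i)$, and this is nonnegative because $N_\ell\ge 0$ and $P_m(x_\ell)>0$ for $\ell>m$ (it vanishes when $m=s$). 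Applying the same truncated count to the successive contractions $C_1,(C_1)_1,\dots$ should give, exactly as in the passage from Lemma~\ref{lem:e11}(2) to Lemma~\ref{lem:e1} and Eq.~\eqref{eq:e5}, relations expressing each $F_j^{(m)}$ as an affine function of $F_1^{(m)}=\sum_{i=1}^m x_i-\binom{m}{2}$; together with $F_m^{(m)}=\prod_{i=1}^m x_i\ge 0$ these already yield the ``continuous'' estimate $\sum_{i=1}^m x_i\ge\frac{(m-1)(n-m)}{q}+\binom{m}{2}$, the $m$-variable analogue of the lower bound of Theorem~\ref{thm:e2}.

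The remaining, and I expect hardest, point is to upgrade this to the uniform bound $\sum_{i=1}^m x_i\ge m(m-1)$, which is strictly stronger than the continuous estimate exactly when $n<\tfrac{m(q+2)}{2}$; in that regime one must bring in the integrality and strict monotonicity of the $x_i$ together with the affine relations above (and, through Eq.~\eqref{eq:e6}, the size constraint $|C|\le\sum_{k=0}^s\binom nk(q-1)^k$), and then simplify the resulting combinatorial sums over $i$ by the Melzak/Stirling identities in the style of Lemma~\ref{lem:esum} — this is precisely the delicate bookkeeping that \cite{IS} handle by an auxiliary recurrence, with the added twist in the Hamming setting of re-indexing when $x_1=0$, since then the smallest near-distance is lost under contraction. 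Once the bound is established, equality forces all of the inequalities used — $N_\ell\ge 0$ for $\ell>m$, $F_m^{(m)}\ge 0$, the size constraint, and the gaps $x_{i+1}-x_i\ge 1$ — to be tight simultaneously, which, unwound through the affine relations, pins the first $m$ near-distances to $x_i=2i-2$; conversely these values satisfy every relation with equality, as witnessed for $m=s$ by the dual of the binary repetition code of length $2s$ (and, for $s=2$, by Noda's tight $3$-designs in $H(q+2,q)$).
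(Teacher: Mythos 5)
Your proposal has a genuine gap, and moreover its main intermediate claim is false. You assert that the truncated counting (equations $i=0,\dots,m$ plus the analogous truncated relations for the contractions) ``already yields'' the partial-sum analogue $\sum_{i=1}^m x_i\geq \tfrac{(m-1)(n-m)}{q}+\binom{m}{2}$ of the lower bound of Theorem~\ref{thm:e2}. This fails for $m<s$: take $C$ the dual of the extended ternary Golay code in $H(12,3)$, a $5$-design of degree $3$ with $S'(C)=\{0,3,6\}$; for $m=2$ one has $x_1+x_2=3$, while the claimed bound would give $\tfrac{(2-1)(12-2)}{3}+1=\tfrac{13}{3}>3$. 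The reason the derivation breaks is that once you truncate at $m<s$, the identities for $C$ and for its contractions are no longer exact: each carries a discrepancy term $\sum_{\ell>m}N_\ell P_m(\,\cdot\,)$ of unknown size, and after the elimination that produces the analogue of Eq.~\eqref{eq:e5} these discrepancies enter with alternating signs, so you cannot solve for the $F_j^{(m)}$ as affine functions of $F_1^{(m)}$ nor keep track of the direction of the resulting inequality. On top of this, the step you yourself flag as the hardest --- upgrading to the uniform bound $m(m-1)$ and extracting the equality case $x_i=2i-2$ --- is left entirely to ``delicate bookkeeping,'' so the actual content of the theorem is not proved.

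The intended argument (the analogue of \cite[Proposition~4.1]{IS}) is much more elementary and avoids the $F_j$-machinery altogether: induct on $s$ (the cases $s=1$ and $m=1$ being trivial). If $x_i\geq 2i$ for all $i\leq m-1$ the bound is immediate. Otherwise let $\ell$ be the least index with $x_\ell\leq 2\ell-1$ and contract $C$ in $2\ell$ coordinates (fixing a symbol in each); the result is a $(2(s-\ell)-1)$-design in $H(n-2\ell,q)$ whose near-distance set is contained in $\{x_j-2\ell\}$, and since $x_i\leq 2\ell-1<2\ell$ for $i\leq\ell$ only the indices $j>\ell$ survive, so the contraction has degree at most $s-\ell$. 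The induction hypothesis applied to the contraction gives $\sum_{i>\ell}x_i\geq s(s-1)-\ell(\ell-1)$ (with its own equality condition), and combining this with $x_i\geq 2i$ for $i<\ell$ yields both the bound and the characterization of equality ($\ell=1$, $x_i=2i-2$). If you want to salvage your write-up, this contraction-induction is the missing idea; your semilattice computation reproves the $m=s$ case already covered by Theorem~\ref{thm:e2} but does not reach the partial sums.
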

\begin{remark}
It is tempting to conjecture 
a stronger inequality $\sum_{i=1}^m x_i\geq \frac{qm(m-1)}{2}$. 
However, a counterexample to this is given 
by tight $3$-designs in $H(n,q)$, $q\geq 3$. 
\end{remark}

\subsection{Application: feasible parameters and examples}\label{sect:f}
A tuple of parameters $(|C|,n,q,\alpha_1,\ldots,\alpha_s)$ 
of a (putative) $t$-design $C$ in $H(n,q)$ 
with $t\geq 2s-2$ determines  
the second eigenmatrix $Q$ 
of the Delsarte scheme 
$(C,\{S_i\}_{i=0}^s)$ 
(see Theorem \ref{thm:t2s-2}), 
whence one can compute the intersection numbers and  
the Krein parameters of the scheme. 
We call such a tuple \emph{feasible} if the intersection numbers 
are nonnegative integers and the Krein parameters are 
nonnegative.

We list some feasible tuples of parameters for 
$(2s-1)$-designs 
with degree $s$ in $H(n,q)$ for $s\in\{2,3,4\}$ in Tables~\ref{table:3des}, \ref{table:5des}, \ref{table:7des}, 
respectively. 
To find these parameters, we carried out the following procedure:
\begin{itemize}
    \item for each $q$ and $n$ in the given ranges, 
    compute the lower $L$ and upper $U$ bounds for $|C|$ 
    by Eq. \eqref{eq:tight} and Eq. \eqref{eq:tightcode}, 
    respectively;
    \item for each value of $|C|$, $L\leq |C|\leq U$, divisible 
    by $q^{2s-1}$, compute the first eigenmatrix $P=|C|Q^{-1}$, 
    where $Q$ is defined in Lemma \ref{thm:t2s-2}, 
    with $\alpha_1,\ldots,\alpha_s$ being treated as indeterminates;
    \item the first row of $P$ gives the valencies of 
    the Delsarte scheme, whence one can compute the numbers of pairs $(x,y)\in C\times C$ such 
    that $\partial(x,y)=\alpha_i$, for $i=1,2,\ldots,s$;
    \item with this information in hand, simplify the left-hand 
    side of the equation in Lemma \ref{lem:cha}(3) to get a polynomial 
    $p_i$ in $\alpha_1,\ldots,\alpha_s$, for $i=1,2,\ldots,t$;
    \item compute a Gr\"{o}bner basis of the ideal 
    in $\mathbb{Q}[\alpha_1,\ldots,\alpha_s]$ generated 
    by the polynomials $p_i$, $i=1,2,\ldots,t$;
    \item by the elimination property, if the set of solutions 
    to the equations in Lemma \ref{lem:cha}(3) is finite, 
    the Gr\"{o}bner basis contains exactly one univariate 
    polynomial (say, in $\alpha_s$), whose roots are 
    the values of $\alpha_1,\ldots,\alpha_s$ (note that 
    each $\alpha_i$ should satisfy $\alpha_i\in \{0,\ldots,n\}$). Once these values are determined, we can check the feasibility conditions. 
    The case when the equations have infinitely many solutions 
    can be handled separately; in fact:
    \begin{itemize}
        \item for $s\in \{2,3\}$, it did not occur in our computations at all;
        \item for $s=4$, in all cases that occured, the largest polynomial of the basis factorized as $g(\alpha_4)\cdot \prod_{i<j}(\alpha_i-\alpha_j)$, 
        where $g$ is a univariate polynomial.
    \end{itemize}
\end{itemize}

  {\centering
  {\small 
  \begin{tabular}{ccccccc}
    \hline
    $|C|$ & $n$ & $q$ & $\alpha_1$ & $\alpha_2$ & $(v,k,\lambda,\mu)$ & Comment, see \cite{aeb} \\
    \hline \hline
     16 & 5 & 2 & 2 & 4 & $(16,5,0,2)$ & $\exists!$, \cite[Ex.~TF3]{CK}\\
    392 & 46 & 2 & 21 & 28 & $(392,115,18,40)$ & $\exists?$\\
1080 & 78 & 2 & 36 & 45 & $(1080,364,88,140)$ & $\exists?$\\
800 & 85 & 2 & 40 & 50 & $(800,204,28,60)$ & $\exists?$\\
784 & 116 & 2 & 56 & 70 & $(784,116,0,20)$ & $\exists?$\\
1600 & 205 & 2 & 100 & 120 & $(1600,205,0,30)$ & \\
8400 & 222 & 2 & 105 & 120 & $(8400,3367,1190,1456)$ & \\
4032 & 261 & 2 & 126 & 144 & $(4032,1015,182,280)$ & \\
13872 & 286 & 2 & 136 & 153 & $(13872,5720,2128,2520)$ &\\
7776 & 300 & 2 & 144 & 162 & $(7776,2600,736,936)$ &\\
81 & 10 & 3 & 6 & 9 & $(81,20,1,6)$ & $\exists!$, \cite[Ex.~TF3]{CK}\\
243 & 11 & 3 & 6 & 9 & $(243,110,37,60)$ & $\exists$, \cite[Ex.~RT6]{CK}\\
729 & 56 & 3 & 36 & 45 & $(729,112,1,20)$ & $\exists!$,  \cite[Ex.~FE2]{CK}\\
7803 & 235 & 3 & 153 & 170 & $(7803,1692,261,396)$ &\\
64 & 6 & 4 & 4 & 6 & $(64,18,2,6)$ & $\exists$, Example \ref{ex:1}(3) \\
256 & 17 & 4 & 12 & 16 & $(256,51,2,12)$ & $\exists$, \cite[Ex.~TF3]{CK} \\
4096 & 78 & 4 & 56 & 64 & $(4096,1287,326,440)$ & 
$\exists$, \cite[Ex.~FE3]{CK}
\\
625 & 26 & 5 & 20 & 25 & 
$(625,104,3,20)$ & $\exists$, \cite[Ex.~TF3]{CK} \\
28125 & 267 & 5 & 210 & 225 & $(28125,6764,1363,1710)$ &\\
216 & 8 & 6 & 6 & 8 & $(216,75,18,30)$ & $\not\exists$ Corollary~\ref{cor:tight3}\\
1296 & 37 & 6 & 30 & 36 & $(1296,185,4,30)$ & $\exists?$\\
    \hline
  \end{tabular}
\captionof{table}{Feasible parameters for non-tight $3$-designs with degree $2$ in $H(n,q)$ for $n\leq 300$ and $2\leq q \leq 6$.}
  \label{table:3des}}}

{\centering
  {\small 
  \begin{tabular}{ccccccc}
    \hline
    $|C|$ & $n$ & $q$ & $\alpha_1$ & $\alpha_2$ & $\alpha_3$ & Comment  \\
    \hline \hline
     32 & 6 & 2  & 2 & 4 & 6 &  the dual of the repetition  code\\
     1024 & 22 & 2 & 8 & 12 & 16 & the doubly shortened code of the extended Golay code\\ 
     729 & 12 & 3 & 6 & 9 & 12 & the dual code of the extended Golay code\\
    \hline
  \end{tabular}
  \captionof{table}{Feasible parameters for $5$-designs with degree $3$ in $H(n,q)$, $2\leq n\leq 200,2\leq q\leq 10$.}
  \label{table:5des}  }}

  {\centering
  {\small 
  \begin{tabular}{cccccccc}
    \hline
    $|C|$ & $n$ & $q$ & $\alpha_1$ & $\alpha_2$ & $\alpha_3$ & $\alpha_4$ & Comment   \\
    \hline \hline
         128 & 8 & 2  & 2 & 4 & 6 & 8 & the dual of the repetition  code\\
     4096 & 24 & 2  & 8 & 12 & 16 & 24 & the extended Golay code, oa.4096.12.2.7 in \cite{Sl}\\
    \hline
  \end{tabular}
  \captionof{table}{Feasible parameters for $7$-designs with degree $4$ in $H(n,q)$, $8\leq n\leq 100,2\leq q\leq 10$.}
  \label{table:7des}  }}

As for Table~\ref{table:3des}, it is clear that  
the Delsarte scheme with $s=2$ classes corresponds to a strongly regular graph; the parameters are given in the second to last column. 

Recall that a linear code $C\leq \mathbb{F}_q^n$ is said to be \textbf{projective} if any two of its coordinates are linearly independent, i.e., the dual code $C^{\perp}$ has minimum distance $d_{C^\perp}\geq 3$. If $\dim C=k$, then the columns of the generator matrix of $C$ give a set $\mathcal{O}$ of $n$ points in the projective geometry $\mathrm{PG}(k-1,q)$. A code $C$ is called a \textbf{two-weight} code if every nonzero vector from $C$ has weight (support) $w_1$ or $w_2$. It is well known that the matrix whose rows consist of the vectors of $C$ is an $\OA(|C|,n,q,d_{C^\perp}-1)$. Thus, a projective two-weight code in $\mathbb{F}_q^{n}$ gives rise to a 2-design with degree $2$ in $H(n,q)$ (see \cite{CK} and \cite[Chapter~7]{BM} for more results on projective two-weight codes and strongly regular graphs). Furthermore, 
it has strength $3$ if and only if $d_{C^\perp}\geq 4$ holds if and only if no 3 points of $\mathcal{O}$ are on a line, i.e., $\mathcal{O}$ is a projective $n$-cap.
Projective $n$-caps were classified in a series of papers by Calderbank, Tzanakis and Wolfskill and others (see \cite[Section~7.1.9.J]{BM}):
\begin{itemize}
    \item[(i)] an ovoid of $\mathrm{PG}(3,q)$, \cite[Ex.~TF3]{CK}; 
    \item[(ii)] the Coxeter $11$-cap of $\mathrm{PG}(4,3)$, \cite[Ex.~RT6]{CK}; 
    \item[(iii)] the Hill $56$-cap of $\mathrm{PG}(5,3)$, \cite[Ex.~FE2]{CK}; 
    \item[(iv)] a $78$-cap of $\mathrm{PG}(5,4)$, \cite[Ex.~FE3]{CK}; 
    \item[(v)] a $430$-cap of $\mathrm{PG}(6,4)$.
\end{itemize}

The uniqueness of the example in (iv) and the existence of (v) remain longstanding open problems in finite geometry and coding theory, unresolved for more than 40 years. Recently, Bamberg \cite{Bamberg} showed that the answer to the latter question is negative if the answer to the former one is affirmative; the proof is based on a certain $9$-class association scheme. In fact, the Delsarte scheme of the design corresponding to a projective two-weight code is a translation scheme and thus admits the dual scheme (see \cite[Section~2.10.B]{BCN}). The Bamberg scheme is a fission of this dual scheme for the only known example in (iv). Interestingly, it admits a 4-class fusion with the same parameters 
as the fission scheme of the Delsarte scheme of the design from Theorem \ref{thm:qant4}; we did not check whether these two schemes are isomorphic.




Since a projective two-weight 
code is linear, the number 
of points must be divisible 
by a power of $q$. This shows 
that in many open cases 
from Table \ref{table:3des} such 
designs cannot be obtained from codes\footnote{For  $n=8$, $q=6$, note that a strongly regular graph may still exist, see \cite{aeb}.}. 
We wonder whether  
the fission schemes of their 
Delsarte schemes may help 
to rule out their existence.

The degree sets of some examples 
in Tables \ref{table:3des}, 
\ref{table:5des}, 
\ref{table:7des} feature 
a symmetry property. 
We will characterize 
these examples in the 
theorems below. 
More precisely, we consider  codes $C$ in $H(n,2)$ having 
the property that both $n\in S(C)$ and $n-a\in S(C)$ 
whenever $ n\ne a\in S(C)$. 
These codes include the class of {\it self complementary codes}, 
see \cite{AHS17} for linear programming bounds 
and related association schemes. 
Before stating the results, we prepare a lemma for extremal designs $C$ with the degree set $S(C)$ such that $a\in S(C)\cup\{0\}$ implies $n-a\in S(C)\cup\{0\}$. 
\begin{lemma}\label{lem:q2}
    Let $C$ be a $(2s-1)$-design with degree $s$ in $H(n,q)$. If, for every $a\in S(C)\cup\{0\}$, one has $n-a\in S(C)\cup\{0\}$, then $q=2$.  
\end{lemma}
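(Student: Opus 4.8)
The plan is to convert the symmetry hypothesis into an extra linear relation on the complementary degree set $S'(C)=\{x_1,\dots,x_s\}$ (with $x_1<\dots<x_s$) and then to compare it with the equality case of Theorem~\ref{thm:e2}. Throughout I assume $s\geq 2$; the case $s=1$ is degenerate (for instance $C=\{(j,j,\dots,j):j\in[q]\}\subseteq H(n,q)$ is a $1$-design of degree $1$ with $S(C)=\{n\}$, for which the hypothesis holds regardless of $q$).

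First I would translate the hypothesis. Put $T=S(C)\cup\{0\}\subseteq\{0,1,\dots,n\}$ and let $\iota\colon z\mapsto n-z$ be the involution of $\{0,1,\dots,n\}$. The hypothesis says exactly that $\iota(T)\subseteq T$, hence $\iota(T)=T$ since $\iota$ is a bijection and $T$ is finite. Because $0\in T$ this forces $n=\iota(0)\in T$, i.e.\ $n\in S(C)$, so $0=n-n\in S'(C)$ and $x_1=0$. Moreover $S'(C)=\iota(S(C))=\iota(T\setminus\{0\})=T\setminus\{n\}$, and therefore $S'(C)\setminus\{x_1\}=T\setminus\{0,n\}$ is again $\iota$-invariant; unwinding this, $\{x_2,\dots,x_s\}=\{n-x_2,\dots,n-x_s\}$. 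The only bookkeeping that needs attention here is using $x_1=0$ to identify $S'(C)\setminus\{x_1\}$ with $T\setminus\{0,n\}$; this is where the hypothesis is genuinely used.

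Next I would evaluate $\sum_{i=1}^s x_i$ in two different ways. On one hand, from the $\iota$-invariance of $\{x_2,\dots,x_s\}$, pairing each $x_i$ with $n-x_i$ gives $\sum_{i=2}^s x_i=\tfrac12\sum_{i=2}^s\bigl(x_i+(n-x_i)\bigr)=\tfrac{(s-1)n}{2}$, so $\sum_{i=1}^s x_i=\tfrac{(s-1)n}{2}$ since $x_1=0$. On the other hand, because $x_1=0$, the left-hand inequality of Theorem~\ref{thm:e2} is an equality, i.e.\ $\sum_{i=1}^s x_i=\tfrac{(s-1)(n-s)}{q}+\tfrac{s(s-1)}{2}$. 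Equating the two expressions and cancelling $\tfrac{s(s-1)}{2}$ from both sides leaves $\tfrac{(s-1)(n-s)}{2}=\tfrac{(s-1)(n-s)}{q}$.

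Finally I would divide out the factor $(s-1)(n-s)$. This is legitimate: $s-1>0$ because $s\geq 2$, and $n-s>0$ because $C$ is an orthogonal array of strength $2s-1$, so $n\geq 2s-1>s$ (alternatively, Theorem~\ref{thm:e3} with $m=s$ gives $\tfrac{(s-1)n}{2}\geq s(s-1)$, hence $n\geq 2s$). The cancellation yields $\tfrac1q=\tfrac12$, i.e.\ $q=2$, as claimed. I expect the real content of the argument to be in the previous paragraph, where Theorem~\ref{thm:e2} supplies a second, independent evaluation of $\sum_i x_i$; the only mildly delicate points are the set-theoretic bookkeeping of the first step and the harmless exclusion of the degenerate case $n=s$.
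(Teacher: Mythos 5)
Your proof is correct and follows essentially the same route as the paper's own argument: the symmetry of $S(C)\cup\{0\}$ about $n/2$ forces $x_1=0$ and $\sum_{i=1}^s x_i=\tfrac{(s-1)n}{2}$, which is then equated with the equality case (left-hand side) of Theorem~\ref{thm:e2} to yield $q=2$. Your explicit handling of the degenerate case $s=1$ and of the cancellation of the factor $(s-1)(n-s)$ is a careful touch that the paper leaves implicit, but it does not change the argument.
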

\begin{proof}
    As in Theorem \ref{thm:e2}, 
    define the set $S'(C)=\{x_1,\ldots,x_s\}$ with $x_1<\cdots<x_s$. By our assumptions, $x_1=0$ and 
    the set $S(C)\cup\{0\}$ is symmetric with respect to $n/2$, hence Theorem~\ref{thm:e2} shows that 
    \[
    \frac{(s-1)n}{2}=\sum_{i=1}^s x_i=\frac{(s-1)(n-s)}{q}+\frac{s(s-1)}{2}, 
    \]
    whence it follows that $q=2$. 
\end{proof}
\begin{theorem}
Let $n,q$ be positive integers such that $n$ is even and $n\geq 4,q\geq2$.   
Any $3$-design $C$ in $H(n,q)$ with degree $2$ and degree set $S(C)=\{n/2,n\}$ is isomorphic to the Hadamard code of a Hadamard matrix of order $n$. 
\end{theorem}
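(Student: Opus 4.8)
The plan is to determine $q$ first, then the internal structure of $C$, then its size, and finally to recognize a Hadamard matrix. First I would invoke Lemma~\ref{lem:q2}: since $S(C)\cup\{0\}=\{0,n/2,n\}$ is symmetric with respect to $n/2$, the hypothesis of that lemma is satisfied, and hence $q=2$. From now on $C\subseteq[2]^n$ and $C$ is a binary $3$-design with every pair of distinct codewords at Hamming distance $n/2$ or $n$.

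Next I would show that $C$ is self-complementary, i.e. $\bar x\in C$ for every $x\in C$. Because $t=3\ge 2s-2$ with $s=2$, Theorem~\ref{thm:t2s-2} applies and $(C,\{S_0,S_1,S_2\})$ is a $2$-class association scheme, where $S_2=\{(x,y)\in C\times C\mid \partial(x,y)=n\}$; in particular $S_2$ is a regular graph of some valency $k_2$. Two distinct codewords cannot both equal the complement of a third, so $k_2\le 1$, while $k_2\ge 1$ since $n=\alpha_2\in S(C)$; hence $k_2=1$, i.e. $S_2$ is a perfect matching. Consequently $C=C'\sqcup\overline{C'}$, where $C'$ is a transversal of the complementary pairs; put $m=|C'|$, so $|C|=2m$. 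Now pass to the $\pm1$ model: let $M'$ be the $m\times n$ matrix whose rows are the vectors of $C'$ with $0\mapsto 1$ and $1\mapsto -1$, so that Hamming distance $d$ corresponds to inner product $n-2d$. Distinct rows of $M'$ are at distance $n/2$ (they cannot be at distance $n$, as $C'$ is a transversal), hence have inner product $0$, i.e. $M'(M')^{\top}=nI_m$. In particular the rows of $M'$ are linearly independent, so $m\le n$ and $|C|\le 2n$. On the other hand $C$ is an $\OA(|C|,n,2,3)$, so the Rao bound \eqref{eq:tighto} gives $|C|\ge 2n$. Therefore $|C|=2n$ and $m=n$, so $M'$ is a square $\pm1$ matrix with $M'(M')^{\top}=nI_n$, i.e. a Hadamard matrix of order $n$ (and in particular $n\equiv 0\pmod 4$). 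In the $\pm1$ model $C$ is exactly $\{r,-r\mid r\text{ a row of }M'\}$, which is by definition the Hadamard code of $M'$, as desired.

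The argument is essentially routine once this structure is extracted; the one step that needs genuine care is the deduction that $C$ is self-complementary. This does not follow from the distance set alone — for instance $\{u,-u,v\}$ with $u\perp v$ has distance set $\{n/2,n\}$ and a complementary pair but is not self-complementary — so one really must use the regularity of the relations of the Delsarte scheme provided by Theorem~\ref{thm:t2s-2}, which is precisely where the $3$-design hypothesis is used. After that, the twin bounds ``rank of $M'$ is at most $n$'' and ``Rao bound forces at least $2n$ codewords'' pin down $|C|=2n$ with no further computation, and the Hadamard matrix falls out.
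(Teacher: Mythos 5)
Your proof is correct, and its skeleton matches the paper's: invoke Lemma~\ref{lem:q2} to force $q=2$, use the Delsarte scheme from Theorem~\ref{thm:t2s-2} to extract structure, pin down $|C|=2n$, and recognize the Hadamard code. The genuine difference is in the middle step. The paper computes the first eigenmatrix $P=|C|Q^{-1}$ of the $2$-class Delsarte scheme explicitly (its first row $(1,|C|-2,1)$ already shows the distance-$n$ relation has valency $1$, i.e.\ $C$ is self-complementary) and then obtains $|C|=2n$ from the Krawtchouk identities of Lemma~\ref{lem:cha}(3). You instead derive the valency-$1$ statement combinatorially from the mere regularity of the scheme's relations (correctly flagging that the distance set alone is not enough), and you determine the size by a two-sided squeeze: the Gram-matrix/rank argument on a transversal $C'$ gives $|C|\le 2n$, while the Rao bound \eqref{eq:tighto} for an $\OA(|C|,n,2,3)$ gives $|C|\ge 2n$ (so in effect you show $C$ is a tight $3$-design). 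Both routes are sound; yours avoids any Krawtchouk computation and is more self-contained, whereas the paper's eigenmatrix-plus-Lemma~\ref{lem:cha}(3) method is the one that generalizes uniformly to the $s=3,4$ theorems that follow it, where the same computation must also determine the distances and not just $|C|$.
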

\begin{proof}
By Lemma~\ref{lem:q2}, $q=2$. 
By Theorem~\ref{thm:t2s-2}, 
the Delsarte scheme of $C$ 
has the first eigenmatrix: 
\[
\left[
\begin{array}{ccc}
 1 & |C|-2 & 1 \\
 1 & \frac{|C|}{n}-2 & 1-\frac{|C|}{n} \\
 1 & -2 & 1 \\
\end{array}
\right].
\]
By Lemma~\ref{lem:cha}(3), the only possible value for $|C|$ is $|C|=2n$. 
It is easily seen that $C$ is the Hadamard code of a Hadamard matrix 
of order $n$, and vice versa. 
\end{proof}

\begin{theorem}
Let $n,q$ be positive integers such that $n\geq 6,q\geq2$. 
Any $5$-design $C$ in $H(n,q)$ with degree $3$ and degree set $S(C)=\{a,n-a,n\}$ for some $a$ with $a<n/2$ is isomorphic to the dual of the binary repetition code of length $6$. 
\end{theorem}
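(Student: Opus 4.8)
The plan is to show first that $q=2$, then that $n=6$ (so that $|C|=32$), and finally to invoke the classification of tight orthogonal arrays.

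To begin, I would record that the hypotheses force $0<a<n/2$: one has $a\ge 1$, for otherwise $S(C)=\{n\}$ has cardinality $1$, contradicting that $C$ has degree $3$; and $a<n/2$ is assumed. Hence $a$, $n-a$, $n$ are three distinct Hamming distances and $S(C)\cup\{0\}=\{0,a,n-a,n\}$ is symmetric about $n/2$, i.e.\ $n-b\in S(C)\cup\{0\}$ whenever $b\in S(C)\cup\{0\}$. Applying Lemma~\ref{lem:q2} with $s=3$ yields $q=2$.

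Next I would contract. Fix a coordinate and a symbol and let $C_1$ be the associated contraction, as in Eq.~\eqref{eq:Ci}; then $C_1$ is a $4$-design in $H(n-1,2)$ with $|C_1|=|C|/2$. For distinct $u,v\in C_1$ we have $\partial(u,v)=\partial\bigl((1,u),(1,v)\bigr)\in S(C)=\{a,n-a,n\}$, and since $\partial(u,v)\le n-1<n$ this forces $S(C_1)\subseteq\{a,n-a\}$, so the degree of $C_1$ is at most $2$. On the other hand the degree of $C_1$ is at least $2$, because a $4$-design in a Hamming scheme has degree at least $2$. Therefore $C_1$ has degree exactly $2$, and then $|C_1|$ is at the same time at least and at most $\sum_{k=0}^{2}\binom{n-1}{k}(q-1)^k$ by Eq.~\eqref{eq:tight} and Eq.~\eqref{eq:tightcode}; that is, $C_1$ is a tight $\OA$ of strength $4$ in $H(n-1,2)$. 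By Result~\ref{theo:class2e}(1), the only such array over a binary alphabet has parameters $(16,5,2)$, whence $n-1=5$. Thus $n=6$ and $|C|=2|C_1|=32$, which is the Rao bound of Eq.~\eqref{eq:tighto}, so $C$ is a tight $\OA(32,6,2,5)$.

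Consequently, by Result~\ref{odd-t}(2) (only the $q=2$ alternative survives), $C$ is isomorphic to the dual of the binary repetition code of length $6$, which finishes the argument. I expect the main point, rather than any of these short computations, to be the observation that the largest distance $n$ cannot persist after contraction, which is precisely what forces $C_1$ to be a $2$-distance set and hence tight, thereby opening the door to the strong classifications of tight $\OA$s of strengths $4$ and $5$. A proof avoiding those classifications would instead have to eliminate the whole family of feasible parameter sets; one can check, using Eq.~\eqref{eq:e2} and Eq.~\eqref{eq:e5} in the notation of Section~\ref{Section4}, that they are exactly $n=m^{2}+2$, $|C|=n^{2}-n+2$, $S'(C)=\{0,\tfrac{m^{2}-m+2}{2},\tfrac{m^{2}+m+2}{2}\}$ for integers $m\ge 1$, and disposing of all $m\ge 2$ is the content of Noda's theorem. (As a consistency check, Theorem~\ref{thm:wilson}(2) pins the degree set of a tight $5$-design in $H(6,2)$ to $\{2,4,6\}$, so in fact $a=2$.)
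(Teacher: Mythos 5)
Your argument is correct, but after the common first step it takes a genuinely different route from the paper. Both proofs begin by applying Lemma~\ref{lem:q2} to the symmetric set $S(C)\cup\{0\}=\{0,a,n-a,n\}$ to force $q=2$. From there the paper stays inside Delsarte's machinery: it writes down the first eigenmatrix of the Delsarte scheme via Theorem~\ref{thm:t2s-2}, uses Lemma~\ref{lem:cha}(3) to pin down $(|C|,a)=(n^2-n+2,(n-\sqrt{n-2})/2)$, hence $n=m^2+2$, and then integrality of the eigenmatrix entry $(m^2+2)/m$ forces $m\in\{1,2\}$, leaving only $n=6$; no classification theorems are invoked. You instead contract: since the distance $n$ cannot survive deletion of a coordinate, $C_1$ is a $4$-design of degree exactly $2$ (degree at least $2$ by the corollary $t\le 2s$), hence squeezed between the Rao bound \eqref{eq:tight} and the degree bound \eqref{eq:tightcode} it is a tight strength-$4$ array; Result~\ref{theo:class2e}(1) with $q=2$ gives $n-1=5$, then $|C|=2|C_1|=32$ meets \eqref{eq:tighto}, and Result~\ref{odd-t}(2) identifies $C$. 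This is shorter, and it nicely isolates the structural point that the distance $n$ forces the contraction to be a two-distance set, but it leans on Noda's classifications of tight orthogonal arrays of strengths $4$ and $5$ (quoted as Results~\ref{theo:class2e} and~\ref{odd-t}), which are far deeper inputs than anything in the paper's proof; the paper's computation is self-contained and also exhibits the feasible parameter family explicitly, which your route never sees. One small inaccuracy in your closing aside: eliminating the spurious parameters $m\ge 3$ does not require Noda's theorem at all --- the paper disposes of them by the elementary integrality of $(m^2+2)/m$.
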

\begin{proof}
By Lemma~\ref{lem:q2}, $q=2$. 
By Theorem~\ref{thm:t2s-2}, 
the Delsarte scheme of $C$ has 
the first eigenmatrix: 
\[
\left[
\begin{array}{cccc}
 1 & \frac{4n (a-n)+n |C| (-2a +n+1)}{4 \left(2 a^2-3 a n+n^2\right)} & \frac{n (2a (|C|-2)+|C| (-2n+n+1))}{4a (2 a-n)} & |C|-1+\frac{(n-1) n |C|}{4a (a-n)} \\
 1 & \frac{2n (a-n)+|C| (n-a)}{2 \left(2 a^2-3 a n+n^2\right)} & \frac{a (|C|-2n)}{2a (2 a-n)} & -1 \\
 1 & \frac{2n(a-n)+|C|}{2\left(2 a^2-3 a n+n^2\right)} & \frac{|C|-2a n}{2a(2 a-n)} & -\frac{|C|}{2a(a-n)}-1 \\
 1 & \frac{n}{2 a-n} & \frac{2 a}{n-2 a}+1 & -1 \\
\end{array}
\right].
\]
By Lemma~\ref{lem:cha}(3), the only possible values for $|C|$ and $a$ are $(|C|,a)=(n^2-n+2,(n-\sqrt{n-2})/2)$. 
Thus, $n=m^2+2$ for a positive integer $m$ and 
the first eigenmatrix becomes   
\[\left[
\begin{array}{cccc}
 1 & \frac{1}{2} (m^2+1) (m^2+2) & \frac{1}{2}(m^2+1)(m^2+2) & 1 \\
 1 & -\frac{1}{2} m(m^2+1) & \frac{1}{2} m(m^2+1) & -1 \\
 1 & -1 & -1 & 1 \\
 1 & \frac{m^2+2}{m} & -\frac{m^2+2}{m} & -1 \\
\end{array}
\right].\]
Since the (4,1)-entry must be an integer, $m=1$ or $2$. 
Then $(n,|C|,a)=(3,8,1)$ or $ (6,32,2)$. 
The first case $n=3$ cannot afford the assumption $n\geq 6$. 
For the second case $n=6$, it is easy to see that $C$ is 
the dual of the binary repetition code.    
\end{proof}

\begin{theorem}
Let $n,q$ be positive integers such that $n$ is even and $n\geq 8,q\geq2$. 
Any $7$-design $C$ in $H(n,q)$ with degree $4$ and degree set $S(C)=\{a,n/2,n-a,n\}$ for some $a$ with $a<n/2$ is isomorphic to either the dual of the binary repetition code of length $8$ or the extended Golay code of length $24$. 
\end{theorem}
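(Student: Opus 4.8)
The plan is to follow the same strategy as in the proofs of the two preceding theorems. First, since the set $S(C)\cup\{0\}=\{0,a,n/2,n-a,n\}$ is symmetric with respect to $n/2$ (with $0\leftrightarrow n$, $a\leftrightarrow n-a$, and $n/2$ fixed), Lemma~\ref{lem:q2} forces $q=2$. Because $C$ is a $7$-design with degree $4$ and $7\geq 2\cdot 4-2$, Theorem~\ref{thm:t2s-2} applies: $C$ induces a $4$-class $Q$-polynomial association scheme (its Delsarte scheme) whose second eigenmatrix $Q$ is the explicit matrix of Krawtchouk values $K_{n,2,j}(\alpha_i)$ with $\alpha_0=0$, $\alpha_1=a$, $\alpha_2=n/2$, $\alpha_3=n-a$, $\alpha_4=n$. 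Inverting, $P=|C|\,Q^{-1}$ is the first eigenmatrix, whose first row records the valencies $k_j$ of the scheme, i.e.\ the number of $y\in C$ at Hamming distance $\alpha_j$ from a fixed $x\in C$; these are rational functions of $n$, $|C|$, $a$.

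Next I would impose that $C$ is a $7$-design via Lemma~\ref{lem:cha}(3): for each $i\in\{1,\dots,7\}$,
\[
\sum_{x,y\in C}K_{n,2,i}(\partial(x,y))=|C|\sum_{j=0}^{4}k_j\,K_{n,2,i}(\alpha_j)=0 .
\]
Clearing denominators yields a system of polynomial equations in the unknowns $|C|$ and $a$ with $n$ as a parameter. As in the previous two proofs, I expect this system to have a $0$-dimensional solution set over $\mathbb{Q}(n)$: solving it (e.g.\ via a Gr\"obner basis in $\mathbb{Q}[n,|C|,a]$, mirroring the procedure of Section~\ref{sect:f}) should express $|C|$ as a polynomial in $n$ and force $a$ to be given by a surd in $n$, so that integrality of $a$ restricts $n$ to a short list. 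Then, imposing that the entries of $P$ (equivalently, the values $K_{n,2,j}(\alpha_i)$, which are eigenvalues of integer matrices and hence rational integers) are integral, and discarding solutions with $n<8$, should leave exactly $(n,|C|,a)=(8,128,2)$ and $(24,4096,8)$.

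Finally, for these two surviving parameter sets one checks that $|C|$ attains the Rao bound \eqref{eq:tighto} with $e=4$ (indeed $1+8+28+56+\binom{7}{3}=128$ and $1+24+276+2024+\binom{23}{3}=4096$), so $C$ is a tight $\OA(N,n,2,7)$; the classification in Result~\ref{odd-t}(3) then identifies $C$, up to isomorphism, with the dual of the binary repetition code of length $8$ or the extended binary Golay code of length $24$. (Alternatively, for $n=8$ one argues directly: $E_i\chi=0$ for $1\leq i\leq 7$ places the characteristic vector $\chi$ in the two-dimensional space $\langle E_0,E_8\rangle$, and $E_8$ having rank one forces $C$ to be the even-weight code; for $n=24$ one invokes the uniqueness of the binary Golay code.) The main obstacle is the bookkeeping in the middle step: the seven design equations are unwieldy, and the delicate part is extracting from their Gr\"obner basis the precise number-theoretic condition on $n$ that pins the solutions down to $8$ and $24$ — the analogue of the ``$m\mid 2$'' reduction in the $5$-design case, but governed here by a higher-degree polynomial.
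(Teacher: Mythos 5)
Your proposal is correct and follows the paper's own proof: Lemma~\ref{lem:q2} gives $q=2$, and the design conditions of Lemma~\ref{lem:cha}(3) applied to the Delsarte scheme of Theorem~\ref{thm:t2s-2} pin down $(|C|,a)=\bigl(n(n^2-3n+8)/3,\,(n-\sqrt{3n-8})/2\bigr)$, so that $3n-8=m^2$ and integrality of the entry $-n/m$ of the first eigenmatrix forces $m\mid 8$, i.e.\ $n\in\{3,4,8,24\}$, leaving $n=8,24$ under the hypothesis $n\ge 8$ --- exactly the computation you outline and whose outcome you correctly anticipate. The only (harmless) difference is in the endgame: you verify tightness against the Rao bound \eqref{eq:tighto} and invoke Result~\ref{odd-t}(3), whereas the paper identifies the two codes directly.
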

\begin{proof}
As in the preceding theorems, 
$q=2$ and computing the first 
eigenmatrix of the Delsarte scheme of $C$
shows that $(|C|,a)=(n(n^2-3n+8)/3,(n-\sqrt{3 n-8})/2)$. 
Thus, $n=(m^2+8)/3$ for a positive integer $m$ and 
the first eigenmatrix becomes   
\[\left[
\begin{array}{ccccc}
 1 & \frac{n^2 \left(m^4+7 m^2+10\right)}{54 m^2} & \frac{2 \left(m^8+11 m^6+87 m^4+221 m^2-320\right)}{243 m^2} & \frac{n^2 \left(m^4+7 m^2+10\right)}{54 m^2} & 1 \\
 1 & \frac{n \left(m^4+7 m^2+10\right)}{54 m} & 0 & -\frac{m^6+15 m^4+66 m^2+80}{162 m} & -1 \\
 1 & \frac{n \left(m^4-2 m^2-8\right)}{27 m^2} & -\frac{2 \left(m^6+6 m^4+57 m^2-64\right)}{81 m^2} & \frac{m^6+6 m^4-24 m^2-64}{81 m^2} & 1 \\
 1 & -\frac{n}{m} & 0 & \frac{n}{m} & -1 \\
 1 & -\frac{n^2}{m^2} & \frac{2 \left(m^4+7 m^2+64\right)}{9 m^2} & -\frac{n^2}{m^2} & 1 \\
\end{array}
\right].\]
Since the (4,1)-entry must be an integer, $m\in\{1,2,4,8\}$. 
Then $(n,|C|,a)=(3,8,1),$ $(4,16,1),$ $(8,128,2),(24,4096,8)$. 
The first two cases $n\in\{3,4\}$ do not afford the assumption $n\geq 8$.  
For the third case $n=8$, it is easy to see that $C$ is the dual 
of the binary repetition code of length $8$.    
For the last case, $C$ is the extended Golay code of length $24$.  
\end{proof}

\begin{example}\label{ex:1}    
Here we list all known 
infinite 
families of $(2s-1)$-designs of degree $s$ in Hamming schemes:
\begin{enumerate}
\item the dual of the binary repetition code of length $2s$ is a tight $(2s-1)$-design in $H(2s,2)$ with degree $s$ for any $s$; 
\item a Hadamard code of length $4n$ is a tight $3$-design in $H(4n,2)$ with degree set $\{2n,4n\}$.
\item for any prime power $q=2^m$, there is a tight $3$-design in $H(q+2,q)$ with degree set $\{q,q+2\}$. 
\item for any prime power $q$, there is a nontight $3$-design in $H(q^2+1,q)$ with degree set $\{q^2-q,q^2\}$, see \cite[Example~TF3]{CK}.

\end{enumerate}
\end{example}

\subsubsection*{Acknowledgments}
We are grateful to Jason Williford for pointing out the connection between the scheme from Theorem \ref{thm:main} and linked systems of symmetric designs.

Sho Suda's research is supported by JSPS KAKENHI Grant Number 22K03410.

\bibliographystyle{abbrv}
\bibliography{extremalOA}

\appendix
\def\thesection{Appendix \Alph{section}}

\def\thesection{\Alph{section}}
\section{Nontight $3$-designs with degree $2$}\label{app:A}

Let $C$ be a nontight $3$-design with degree $2$ in $H(n,q)$. 
Set $S(C)=\{\alpha_1,\alpha_2\}$, $\alpha_0=0$.  
Then the fission scheme of $4$ classes has the following second eigenmatrix: 
\begin{align}
\bbordermatrix{& E_{0,0}  & E_{0,1} & E_{0,2} & E_{1,0} & E_{1,1} \cr
A_{0,0} & 1 & K_{n-1,q,1}(\alpha_0) &  |C|-q\sum_{\ell=0}^1K_{n-1,q,\ell}(\alpha_0) & (q-1)K_{n-1,q,1}(\alpha_0) & q-1 \cr
A_{0,1} & 1 & K_{n-1,q,1}(\alpha_1) &  -q\sum_{\ell=0}^1K_{n-1,q,\ell}(\alpha_1) & (q-1)K_{n-1,q,1}(\alpha_1) & q-1 \cr
A_{0,2} & 1 & K_{n-1,q,1}(\alpha_2) &  -q\sum_{\ell=0}^1K_{n-1,q,\ell}(\alpha_2) & (q-1)K_{n-1,q,1}(\alpha_2) & q-1 \cr
A_{1,1} & 1 & K_{n-1,q,1}(\alpha_1-1) & 0 & -K_{n-1,q,1}(\alpha_1-1) & -1 \cr
A_{1,2} & 1 & K_{n-1,q,1}(\alpha_2-1) & 0 & -K_{n-1,q,1}(\alpha_2-1) & -1 }.\label{eq:fission4}
\end{align} 

There is a one-parameter family with $(|C|,n,\alpha_1,\alpha_2)=(q^4,q^2+1,q^2-q,q^2)$  \cite[Theorem~7.1.1]{BM}.

The eigenmatrices for the corresponding $4$-class scheme are 
\begin{align*}
P&=\left[
\begin{array}{ccccc}
 1 & q \left(q^2-1\right) & q-1 & (q-1)^2 q^2 & (q-1) q^2 \\
 1 & 0 & -1 & (q-1) q & -(q-1) q \\
 1 & -q & q-1 & 0 & 0 \\
 1 & 0 & -1 & -q & q \\
 1 & q (q^2-1) & q-1 & -(q-1) q^2 & -q^2 \\
\end{array}
\right],\\
Q&=\left[
\begin{array}{ccccc}
 1 & (q-1) q^2 & q (q^2-1) & (q-1)^2 q^2 & q-1 \\
 1 & 0 & -q & 0 & q-1 \\
 1 & -q^2 & q(q^2-1) & -(q-1) q^2 & q-1 \\
 1 & q & 0 & -q & -1 \\
 1 & -(q-1) q & 0 & (q-1) q & -1 \\
\end{array}
\right]. 
\end{align*} 

\section{Determinant of the second eigenmatrix}\label{sect:det}
Here we strengthen \cite[Theorem~1]{CG} by Calderbank and Goethals 
for $(2s-1)$-design with degree $s$. We adhere to the notation 
introduced in Section \ref{sect:fission}.

\begin{lemma}{\rm (See the proof of \cite[Theorem~1]{CG})}\label{lem:det}
Let $M=(K_{n,q,k}(\alpha_i))_{\substack{1\leq i \leq s\\ 0\leq k \leq s-1}}$. 
Then $$
\det M=\frac{q^{s(s-1)/2} \prod_{1\leq i<j \leq s}(\alpha_i-\alpha_j)}{\prod_{i=1}^{s-1}i!}. 
$$
\end{lemma}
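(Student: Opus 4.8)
The plan is to use the fact, recalled in the excerpt, that $K_{n,q,k}(x)$ is a polynomial in $x$ of degree exactly $k$. Hence the $k$-th column of $M$ (columns indexed by $k=0,1,\ldots,s-1$) is the degree-$k$ polynomial $K_{n,q,k}$ evaluated at the $s$ points $\alpha_1,\ldots,\alpha_s$, so $M$ is a ``generalized Vandermonde'' matrix. Its determinant equals the product of the leading coefficients of the column polynomials times the ordinary Vandermonde determinant in $\alpha_1,\ldots,\alpha_s$, and the whole computation reduces to identifying those leading coefficients and keeping track of signs.

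First I would compute the leading coefficient $c_k$ of $K_{n,q,k}(x)$, as a polynomial in $x$. In $K_{n,q,k}(x)=\sum_{\ell=0}^k(-1)^\ell(q-1)^{k-\ell}\binom{x}{\ell}\binom{n-x}{k-\ell}$, the factor $\binom{x}{\ell}$ has leading term $x^\ell/\ell!$ and $\binom{n-x}{k-\ell}$ has leading term $(-1)^{k-\ell}x^{k-\ell}/(k-\ell)!$, so the coefficient of $x^k$ in the $\ell$-th summand is $\tfrac{(-1)^k}{\ell!\,(k-\ell)!}(q-1)^{k-\ell}$. Summing over $\ell$ and applying the binomial theorem gives $c_k=\tfrac{(-1)^k}{k!}\sum_{\ell=0}^k\binom{k}{\ell}(q-1)^{k-\ell}=\tfrac{(-1)^k q^k}{k!}=\tfrac{(-q)^k}{k!}$.

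Next I would factor $M=VT$, where $V=(\alpha_i^{\,j})_{1\le i\le s,\ 0\le j\le s-1}$ is the ordinary Vandermonde matrix and $T$ is the upper-triangular change-of-basis matrix expressing $K_{n,q,0},\ldots,K_{n,q,s-1}$ in terms of $1,x,\ldots,x^{s-1}$; the diagonal of $T$ is $(c_0,\ldots,c_{s-1})$, so $\det T=\prod_{k=0}^{s-1}c_k$. Therefore $\det M=\det V\cdot\det T=\Bigl(\prod_{1\le i<j\le s}(\alpha_j-\alpha_i)\Bigr)\prod_{k=0}^{s-1}c_k$. Substituting $c_k=(-q)^k/k!$ gives $\prod_{k=0}^{s-1}c_k=(-1)^{s(s-1)/2}\,q^{s(s-1)/2}\big/\prod_{i=1}^{s-1}i!$, and rewriting $\prod_{i<j}(\alpha_j-\alpha_i)=(-1)^{s(s-1)/2}\prod_{i<j}(\alpha_i-\alpha_j)$ cancels the sign and yields the claimed formula.

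The only point needing care — bookkeeping rather than a real obstacle — is keeping the index ranges consistent (the column index must run over $0,\ldots,s-1$ so that $M$ is $s\times s$), correctly extracting $c_k$, and tracking the two cancelling factors of $(-1)^{s(s-1)/2}$, one from $\prod_k(-q)^k$ and one from reversing each Vandermonde factor. This is essentially the argument already indicated in the proof of \cite[Theorem~1]{CG}.
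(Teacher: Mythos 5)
Your proof is correct: the leading coefficient of $K_{n,q,k}(x)$ is indeed $(-q)^k/k!$, the factorization $M=VT$ with $T$ upper triangular is valid, and the two factors of $(-1)^{s(s-1)/2}$ cancel exactly as you say. The paper itself gives no proof, only the citation to the proof of \cite[Theorem~1]{CG}, and your generalized-Vandermonde argument is precisely the standard computation intended there, so your approach matches the referenced one.
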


Recall that the entries of the first eigenmatrix are algebraic integers.

\begin{lemma}\label{lem:Qint}
If the second eigenmatrix $Q$ of a $D$ classes symmetric association scheme with $v$ vertices has only integer entries, then $\det Q$ divides $v^{D+1}$. 
\end{lemma}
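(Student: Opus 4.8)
The plan is to exploit the identity relating the first and second eigenmatrices, $PQ=vI_{D+1}$ (equivalently $Q=v\cdot P^{-1}$), together with the fact recalled just above the statement that the entries of the first eigenmatrix $P$ are algebraic integers, being eigenvalues of the adjacency matrices $A_i$.

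First I would note that since $v\neq 0$, the relation $Q=v\cdot P^{-1}$ shows $Q$ is invertible, so $\det Q\neq 0$; taking determinants in $PQ=vI_{D+1}$ then gives $\det P\cdot\det Q=v^{D+1}$. Next, expanding $\det P$ by the Leibniz formula exhibits it as a $\mathbb{Z}$-linear combination of products of entries of $P$; as those entries are algebraic integers and the algebraic integers form a ring, $\det P$ is an algebraic integer. On the other hand, by hypothesis every entry of $Q$ lies in $\mathbb{Z}$, hence $\det Q\in\mathbb{Z}$.

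Finally, from $\det P=v^{D+1}/\det Q$ we conclude that $\det P$ is a rational number which is also an algebraic integer, and therefore $\det P\in\mathbb{Z}$. Consequently $\det Q$ divides $v^{D+1}$ in $\mathbb{Z}$, as claimed. The only nontrivial ingredient is the elementary fact that a rational algebraic integer is a rational integer; apart from that, the proof is just a one-line determinant computation, so I do not anticipate any real obstacle.
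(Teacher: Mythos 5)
Your proof is correct and follows essentially the same route as the paper: both start from $PQ=vI_{D+1}$ and use that the entries of $P$ are algebraic integers together with the fact that a rational algebraic integer is an integer. The only cosmetic difference is that you apply this fact to $\det P$ directly, while the paper applies it entry-by-entry to $P=vQ^{-1}$ before taking determinants.
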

\begin{proof}
If the entries of the second eigenmatrix $Q=\frac{1}{v}P^{-1}$ are all integral, then 
those of the first eigenmatrix $P$ are all rational and hence must be integral as well. 
Taking the determinant of the equation $PQ=v I$, we obtain 
$\det P \det Q=v^{D+1}$. Therefore, $\det Q$ divides $v^{D+1}$, which yields the conclusion.
\end{proof}

As a corollary of the lemmas above, Calderbank and Goethals showed the following theorem. 
\begin{theorem}{\rm (\cite[Theorem~1]{CG})}\label{thm:CG}
If $C$ is a $(2s-2)$-design with degree $s$ 
and $S(C)=\{\alpha_1,\ldots,\alpha_s\}$ 
in $H(n,q)$,  
then $$\frac{q^{s(s-1)/2}\prod_{1\leq i<j \leq s}(\alpha_i-\alpha_j)}{\prod_{i=1}^{s-1}i!}$$ is an integer dividing $|C|^{s}$.
\end{theorem}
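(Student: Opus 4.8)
The plan is to compute the determinant of the second eigenmatrix of the Delsarte scheme of $C$ and then feed it into Lemma~\ref{lem:Qint}. Since $C$ is a $(2s-2)$-design with degree $s$ in $H(n,q)$, Theorem~\ref{thm:t2s-2} applies (as $t=2s-2\geq 2s-2$) and tells us that, with $\alpha_0=0$, the pair $(C,\{S_i\}_{i=0}^s)$ is a $Q$-polynomial association scheme of $s$ classes whose second eigenmatrix $Q$ is the explicit $(s+1)\times(s+1)$ matrix displayed there. Because Krawtchouk polynomials are integer-valued at integers and $|C|$ is a positive integer, all entries of $Q$ are integers, so Lemma~\ref{lem:Qint} applies with $D=s$ and $v=|C|$: the integer $\det Q$ divides $|C|^{s+1}$.

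Next I would evaluate $\det Q$ by elementary operations. Adding the first $s$ columns of $Q$ (those headed by $K_{n,q,0},\dots,K_{n,q,s-1}$) to the last column does not change the determinant and, by the shape of the last column of $Q$, replaces that column by $(|C|,0,\dots,0)^{\top}$: indeed the top entry becomes $\sum_{\ell=0}^{s-1}K_{n,q,\ell}(\alpha_0)+\bigl(|C|-\sum_{\ell=0}^{s-1}K_{n,q,\ell}(\alpha_0)\bigr)=|C|$, while for $i\geq 1$ the $i$-th entry becomes $\sum_{\ell=0}^{s-1}K_{n,q,\ell}(\alpha_i)-\sum_{\ell=0}^{s-1}K_{n,q,\ell}(\alpha_i)=0$. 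Expanding along this column gives $\det Q=\pm\,|C|\cdot\det M$, where $M$ is exactly the matrix $(K_{n,q,k}(\alpha_i))_{1\leq i\leq s,\ 0\leq k\leq s-1}$ of Lemma~\ref{lem:det}, namely the minor obtained by deleting the row indexed by $\alpha_0$ and the last column.

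It then remains to combine the pieces. By Lemma~\ref{lem:det},
\[
\det M=\frac{q^{s(s-1)/2}\prod_{1\leq i<j\leq s}(\alpha_i-\alpha_j)}{\prod_{i=1}^{s-1}i!},
\]
and this number is an integer simply because $M$ has integer entries. Since $\det Q=\pm|C|\det M$ divides $|C|^{s+1}$ and $|C|\neq 0$, dividing through by $|C|$ shows that $\det M$ divides $|C|^{s}$, which is the assertion. This argument is essentially bookkeeping once Theorem~\ref{thm:t2s-2}, Lemma~\ref{lem:det}, and Lemma~\ref{lem:Qint} are available; the only points that need attention are verifying that the minor produced by the column operation is precisely the matrix $M$ of Lemma~\ref{lem:det}, and observing that the divisibility conclusion is unaffected by the signs coming from the cofactor expansion and from the chosen ordering $\alpha_1<\dots<\alpha_s$.
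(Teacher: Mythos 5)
Your proposal is correct and follows exactly the paper's route: the paper proves Theorem~\ref{thm:CG} by combining Theorem~\ref{thm:t2s-2}, Lemma~\ref{lem:det}, and Lemma~\ref{lem:Qint}, and your column manipulation yielding $\det Q=\pm|C|\det M$ is precisely the bookkeeping the paper leaves implicit (and carries out explicitly in the analogous Lemma~\ref{lem:s}). No gaps.
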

\begin{proof}
Follows from Lemma~\ref{lem:Qint}, Lemma~\ref{lem:det}, and Theorem~\ref{thm:t2s-2}.
\end{proof}

\begin{lemma}\label{lem:s}
Let $Q$ be the second eigenmatrix as in Theorem~$\ref{thm:qant4}(2)$. 
Then $$\det Q=\pm|C|q^{s} 
\left(\frac{q^{s(s-1)/2}\prod_{1\leq i<j \leq s}(\alpha_i-\alpha_j)}{\prod_{i=1}^{s-1}i!}\right)^2$$ 
and $(\det Q)/|C|$ is an integer. 
\end{lemma}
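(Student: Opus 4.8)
The plan is to compute $\det Q$ by elementary column operations that decouple the block structure of $Q$, then a single generalised Laplace expansion along the bottom $s$ rows, and finally to recognise the two remaining $s\times s$ determinants via Lemma~\ref{lem:det}.

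Write $M_1:=M_{n-1,q}[\alpha_0,\ldots,\alpha_s]$ (an $(s+1)\times s$ matrix) and $M_2:=M_{n-1,q}[\alpha_1-1,\ldots,\alpha_s-1]$ (an $s\times s$ matrix), so that, grouping the columns of $Q$ as (the first $s$) $|$ ($\mathbf{c}$) $|$ (the last $s$),
\[
Q=\begin{bmatrix} M_1 & \mathbf{c} & (q-1)M_1 \\ M_2 & O & -M_2 \end{bmatrix}.
\]
First I would add the block of the first $s$ columns to the block of the last $s$ columns: this turns $(q-1)M_1$ into $qM_1$ and $-M_2$ into $O$, and leaves $\det Q$ unchanged. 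In the resulting matrix the bottom $s$ rows are supported on the first $s$ columns only, so the generalised Laplace expansion along those rows has a single nonzero term, giving $\det Q=\pm\det M_2\cdot\det[\,\mathbf{c}\mid qM_1\,]$, where $[\,\mathbf{c}\mid qM_1\,]$ denotes the $(s+1)\times(s+1)$ submatrix on the top $s+1$ rows and the last $s+1$ columns. Pulling a factor $q$ out of each of the $s$ columns coming from $qM_1$ then yields $\det Q=\pm q^{s}\det M_2\cdot\det[\,\mathbf{c}\mid M_1\,]$.

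Next I would simplify $[\,\mathbf{c}\mid M_1\,]$. From the formula for $\mathbf{c}$ in Theorem~\ref{thm:qant4}(2) one reads off $\mathbf{c}=|C|\,\mathbf{e}_1-q\,M_1\mathbf{1}$, where $\mathbf{1}$ is the all-ones vector of length $s$ and $\mathbf{e}_1=(1,0,\ldots,0)^\top$ has length $s+1$ (indeed, $\sum_{\ell=0}^{s-1}K_{n-1,q,\ell}(\alpha_i)$ is precisely the $i$-th entry of $M_1\mathbf{1}$). Adding $q$ times the sum of all columns of the $M_1$-block to the $\mathbf{c}$-column therefore replaces it by $|C|\,\mathbf{e}_1$; expanding the resulting determinant along that column and observing that deleting the $\alpha_0$-row and that column leaves exactly $M_{n-1,q}[\alpha_1,\ldots,\alpha_s]$, I get $\det[\,\mathbf{c}\mid M_1\,]=|C|\cdot\det M_{n-1,q}[\alpha_1,\ldots,\alpha_s]$. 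Finally, Lemma~\ref{lem:det} (with $n$ replaced by $n-1$) evaluates both $\det M_2=\det M_{n-1,q}[\alpha_1-1,\ldots,\alpha_s-1]$ and $\det M_{n-1,q}[\alpha_1,\ldots,\alpha_s]$; since the shift $\alpha_i\mapsto\alpha_i-1$ leaves every difference $\alpha_i-\alpha_j$ unchanged, both equal $\dfrac{q^{s(s-1)/2}\prod_{1\le i<j\le s}(\alpha_i-\alpha_j)}{\prod_{i=1}^{s-1}i!}$, and assembling the factors gives the stated value of $\det Q$.

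For the last claim, the above computation shows $(\det Q)/|C|=\pm q^{s}\,\det M_2\cdot\det M_{n-1,q}[\alpha_1,\ldots,\alpha_s]$, and each factor on the right is an integer, since the entries of $M_2$ and of $M_{n-1,q}[\alpha_1,\ldots,\alpha_s]$ are Krawtchouk numbers evaluated at integer arguments, hence integers; thus $(\det Q)/|C|\in\mathbb{Z}$. I do not anticipate any real difficulty: the only point needing care is the bookkeeping of the Laplace-expansion sign and of the row/column labels, which is harmless because the statement only asserts $\det Q$ up to sign; the substantive inputs are Lemma~\ref{lem:det} and the identity $\mathbf{c}=|C|\,\mathbf{e}_1-q\,M_1\mathbf{1}$.
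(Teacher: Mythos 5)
Your proposal is correct and follows essentially the same route as the paper: the same column operation turning $(q-1)M_1$ into $qM_1$ and killing $-M_2$, the same use of the relation $\mathbf{c}+qM_1\mathbf{1}=|C|\,\mathbf{e}_1$ to extract the factor $|C|$, the reduction to a product of two $s\times s$ Krawtchouk determinants evaluated via Lemma~\ref{lem:det} (shift-invariance of the differences), and the same integrality argument; the only cosmetic difference is that you perform the Laplace expansion along the bottom rows before simplifying the $\mathbf{c}$-column rather than after.
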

\begin{proof}
Compute the determinant of the second eigenmatrix $Q$ as follows.
Denote $K_{n-1,q,j}(x)$ by $K_{j}(x)$ for short. 
For $i=1,\ldots,s$, add the $i$-th column to the $(s+1+i)$-th column to obtain:
\begin{align}
&\begin{vmatrix}
 K_{0}(\alpha_0) &\cdots & K_{s-1}(\alpha_0) & |C|-q\sum_{\ell=0}^{s-1}K_{\ell}(\alpha_0) & q K_{0}(\alpha_0) &\cdots & q K_{s-1}(\alpha_0) \cr
 K_{0}(\alpha_1) &\cdots & K_{s-1}(\alpha_1)& -q\sum_{\ell=0}^{s-1}K_{\ell}(\alpha_1) & q K_{0}(\alpha_1) &\cdots & q K_{s-1}(\alpha_1) \cr
  \vdots &\ddots & \vdots & \vdots & \vdots \cr
   K_{0}(\alpha_s) &\cdots & K_{s-1}(\alpha_s)& -q\sum_{\ell=0}^{s-1}K_{\ell}(\alpha_s) & q K_{0}(\alpha_s) &\cdots & q K_{s-1}(\alpha_s) \cr
 K_{0}(\alpha_{1}-1) &\cdots & K_{s-1}(\alpha_{1}-1)& 0& 0 & \cdots & 0 \cr
 \vdots &\ddots & \vdots & \vdots& \vdots & \ddots & \vdots \cr
 K_{0}(\alpha_{s}-1) &\cdots & K_{s-1}(\alpha_{s}-1)& 0& 0 & \cdots & 0 
\end{vmatrix}\nonumber\displaybreak[0]\\
&\text{then sum the last $s$ columns up and add them to the $(s+1)$-th column:}
\nonumber\displaybreak[0]\\
&=q^{s}\begin{vmatrix}
 K_{0}(\alpha_0) &\cdots & K_{s-1}(\alpha_0) & |C| & K_{0}(\alpha_0) &\cdots &  K_{s-1}(\alpha_0) \cr
 K_{0}(\alpha_1) &\cdots & K_{s-1}(\alpha_1)& 0 & K_{0}(\alpha_1) &\cdots &  K_{s-1}(\alpha_1) \cr
  \vdots &\ddots & \vdots & \vdots & \vdots \cr
   K_{0}(\alpha_s) &\cdots & K_{s-1}(\alpha_s)& 0 & K_{0}(\alpha_s) &\cdots & K_{s-1}(\alpha_s) \cr
 K_{0}(\alpha_{1}-1) &\cdots & K_{s-1}(\alpha_{1}-1)& 0& 0 & \cdots & 0 \cr
 \vdots &\ddots & \vdots & \vdots& \vdots & \ddots & \vdots \cr
 K_{0}(\alpha_{s}-1) &\cdots & K_{s-1}(\alpha_{s}-1)& 0& 0 & \cdots & 0 
\end{vmatrix}\nonumber\displaybreak[0]\\
&=\pm q^{s}|C|\begin{vmatrix}
 K_{0}(\alpha_1) &\cdots & K_{s-1}(\alpha_1)&  K_{0}(\alpha_1) &\cdots &  K_{s-1}(\alpha_1) \cr
  \vdots &\ddots & \vdots & \vdots & \vdots \cr
   K_{0}(\alpha_s) &\cdots & K_{s-1}(\alpha_s) & K_{0}(\alpha_s) &\cdots & K_{s-1}(\alpha_s) \cr
 K_{0}(\alpha_{1}-1) &\cdots & K_{s-1}(\alpha_{1}-1)& 0 & \cdots & 0 \cr
 \vdots &\ddots & \vdots & \vdots & \ddots & \vdots \cr
 K_{0}(\alpha_{s}-1) &\cdots & K_{s-1}(\alpha_{s}-1)&  0 & \cdots & 0 
\end{vmatrix}\nonumber\\
&=\pm q^{s}|C|\begin{vmatrix}
   K_{0}(\alpha_1) &\cdots &  K_{s-1}(\alpha_1) \cr
   \vdots & \ddots & \vdots \cr
    K_{0}(\alpha_s) &\cdots & K_{s-1}(\alpha_s)  
\end{vmatrix}\begin{vmatrix}
 K_{0}(\alpha_1-1) &\cdots &  K_{s-1}(\alpha_1-1) \cr
   \vdots & \ddots & \vdots \cr
    K_{0}(\alpha_s-1) &\cdots & K_{s-1}(\alpha_s-1)   
\end{vmatrix}\nonumber\\ 
&\text{(by Lemma~\ref{lem:det})}\nonumber\\
&=\pm q^{s}|C| 
\left(\frac{q^{s(s-1)/2}\prod_{1\leq i<j \leq s}(\alpha_i-\alpha_j)}{\prod_{i=1}^{s-1}i!}\right)^2.\nonumber
\end{align}

The fact that $(\det Q)/|C|$ is an integer follows from 
the second equality in the above equation.
\end{proof}
\begin{theorem}\label{thm:det}
If $C$ is a nontight $(2s-1)$-design with degree $s$ and $S(C)=\{\alpha_1,\ldots,\alpha_s\}$ in $H(n,q)$,  
then $$q^{s} 
\left(\frac{q^{s(s-1)/2}\prod_{1\leq i<j \leq s}(\alpha_i-\alpha_j)}{\prod_{i=1}^{s-1}i!}\right)^2$$ is an integer dividing $|C|^{2s}$.
\end{theorem}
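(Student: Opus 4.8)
The plan is to deduce Theorem~\ref{thm:det} directly from the explicit description of the $2s$-class fission scheme given in Theorem~\ref{thm:qant4}(2), combined with the determinant formula of Lemma~\ref{lem:s} and the divisibility statement of Lemma~\ref{lem:Qint}. Since $C$ is assumed to be a \emph{nontight} $(2s-1)$-design with degree $s$, every contraction $C_i$ has degree $s_i=s$, so Theorem~\ref{thm:qant4}(2) applies and produces a symmetric association scheme on $\widetilde{C}$ with $D=2s$ classes, whose second eigenmatrix $Q$ is displayed there. This scheme is the object on which the determinant argument will be run.

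First I would record two elementary facts. (i) The number of vertices is $|\widetilde{C}|=|C|$, since $\widetilde{C}$ is the disjoint union of the $q$ contractions $C_1,\dots,C_q$, each of size $|C|/q$. (ii) Every entry of the second eigenmatrix $Q$ from Theorem~\ref{thm:qant4}(2) is an integer: indeed its entries are $\pm K_{n-1,q,j}(\alpha_i)$, $\pm(q-1)K_{n-1,q,j}(\alpha_i)$, $0$, or one of $|C|-q\sum_{\ell=0}^{s-1}K_{n-1,q,\ell}(\alpha_0)$ and $-q\sum_{\ell=0}^{s-1}K_{n-1,q,\ell}(\alpha_i)$, and Krawtchouk polynomials take integer values at integer arguments. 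With (i) and (ii) in hand, Lemma~\ref{lem:Qint} (applied with $v=|C|$ and $D=2s$) gives that $\det Q$ divides $|C|^{2s+1}$.

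Next I would invoke Lemma~\ref{lem:s}, which computes
\[
\det Q=\pm|C|\,q^{s}\left(\frac{q^{s(s-1)/2}\prod_{1\le i<j\le s}(\alpha_i-\alpha_j)}{\prod_{i=1}^{s-1}i!}\right)^{2}
\]
and, moreover, asserts that $(\det Q)/|C|$ is an integer. Writing
\[
R:=q^{s}\left(\frac{q^{s(s-1)/2}\prod_{1\le i<j\le s}(\alpha_i-\alpha_j)}{\prod_{i=1}^{s-1}i!}\right)^{2},
\]
Lemma~\ref{lem:s} thus says that $R$ is an integer and $\det Q=\pm|C|R$. Combining with the previous paragraph, $|C|R=\pm\det Q$ divides $|C|^{2s+1}$, hence $R$ divides $|C|^{2s}$, which is precisely the assertion of Theorem~\ref{thm:det}.

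The argument is a short bookkeeping combination of two lemmas already established, so I do not expect a genuine obstacle. The only places needing a little care are the identification $|\widetilde{C}|=|C|$ (so that the relevant power in Lemma~\ref{lem:Qint} is exactly $|C|^{2s+1}$ and not some other value) and the verification that no entry of $Q$ secretly fails to be an integer, which is exactly what licenses the application of Lemma~\ref{lem:Qint}. It is worth noting that the integrality of $R$ already follows from Lemma~\ref{lem:s} alone; Lemma~\ref{lem:Qint} is needed only for the divisibility of $R$ into $|C|^{2s}$.
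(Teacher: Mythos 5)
Your proposal is correct and follows exactly the paper's intended argument: Theorem~\ref{thm:qant4}(2) supplies the $2s$-class scheme on $\widetilde{C}$ with $|\widetilde{C}|=|C|$ vertices and integer second eigenmatrix, Lemma~\ref{lem:Qint} gives $\det Q\mid |C|^{2s+1}$, and Lemma~\ref{lem:s} (with $\det Q=\pm|C|R$ and $R$ integral) yields $R\mid |C|^{2s}$. The paper's own proof is just the citation of these three results, so your careful bookkeeping of the vertex count and entry integrality is a faithful expansion of it.
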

\begin{proof}
Follows from Lemma~\ref{lem:Qint}, Lemma~\ref{lem:s}, and Theorem~\ref{thm:qant4}.
\end{proof}
\begin{remark}
Let $q=p^m$ where $p$ is a prime. Let $C$ be a linear code of length  $n$ with dimension $k$ over the finite field $\mathbb{F}_q$ with $q$ elements. 
Assume that $C$  is a $5$-design with degree $3$ in $H(n,q)$. 
By \cite[Corollary~3]{CG}, the three distances of $C$ are
$\alpha_1=(a-1)p^t,\alpha_2=ap^t,\alpha_3=(a+1)p^t$ for some integers $a,t$. 
Then Theorem~\ref{thm:CG} shows that $k\geq m+t$, while 
Theorem~\ref{thm:det} shows $k\geq \frac{3}{2}m+t$.
\end{remark}

\begin{remark}
A theorem for tight designs, similar to Theorem~\ref{thm:det}, is stated as follows:  
If $C$ is a tight $(2s-1)$-design with degree $s$ and $S(C)=\{\alpha_1,\ldots,\alpha_s=n\}$, then 
$$q^{s} 
\left(\frac{q^{(s-1)(s-2)/2} \prod_{1\leq i<j \leq s-1}(\alpha_i-\alpha_j)}{\prod_{i=1}^{s-2}i!}\right)^2$$ is an integer dividing $|C|^{2s-1}$. 
\end{remark}

\end{document}